\theoremstyle{definition}
  \newtheorem{defi}{Definition}[section]
  \newtheorem{exa}[defi]{Example}
  \newtheorem{exas}[defi]{Examples}
  \newtheorem{exer}[defi]{Exercise}
  \newtheorem{nrtxt}[defi]{}
\theoremstyle{plain} 
  \newtheorem{lem}[defi]{Lemma}
  \newtheorem{thm}[defi]{Theorem}
  \newtheorem{cor}[defi]{Corollary}  }
\newenvironment{txt}{\begin{trivlist}\item}{\end{trivlist}}
\newcommand{\noslant}[1]{{\rm#1}}
\newcommand{\eps}{{\varepsilon}}
\newcommand{\bC}{{\mathbb C}}
\newcommand{\bH}{{\mathbb H}}
\newcommand{\bP}{{\mathbb P}}
\newcommand{\bR}{{\mathbb R}}
\newcommand{\bZ}{{\mathbb Z}}
\newcommand{\cB}{{\mathcal B}}
\newcommand{\cC}{{\mathcal C}}
\newcommand{\cD}{{\mathcal D}}
\newcommand{\cR}{{\mathcal R}}
\newcommand{\cS}{{\mathcal S}}
\newcommand{\vb}{{\bm{b}}}
\newcommand{\vv}{{\bm{v}}}
\newcommand{\vw}{{\bm{w}}}
\newcommand{\vx}{{\bm{x}}}
\newcommand{\vy}{{\bm{y}}}
\newcommand{\vC}{{\bm{C}}}
\newcommand{\vM}{{\bm{M}}}
\newcommand{\vV}{{\bm{V}}}
\DeclareMathOperator{\rad}{rad}
\DeclareMathOperator{\GL}{GL}
\DeclareMathOperator{\Z}{Z}
\DeclareMathOperator{\Aut}{Aut}
\DeclareMathOperator{\diag}{diag}
\DeclareMathOperator{\GF}{GF}
\DeclareMathOperator{\id}{id}
\DeclareMathOperator{\PGL}{PGL}
\DeclareMathOperator{\T}{T}
\newcommand{\dis}{\mathbin{\scriptstyle\triangle}}
\newcommand{\notdis}{\mathbin{\not\scriptstyle\triangle}}
\newcommand{\rel}{\mathrel{\cR}}
\newcommand{\card}{\mathbin{\#}}
\newcommand{\Matrixfeld}[4]{\left#1\!\begin{array}{*{#3}{c}}#4\end{array}\!\right#2}
\newcommand{\SMatrixfeld}[4]{\hbox{\scriptsize\arraycolsep=.5\arraycolsep
  $\left#1\!\begin{array}{*{#3}{c}}#4\end{array}\!\right#2$}}
\newcommand{\RMatrixfeld}[4]{\left#1\!\begin{array}{*{#3}{r}}#4\end{array}\!\right#2}
\newcommand{\Mat}{\Matrixfeld()}
\newcommand{\SMat}{\SMatrixfeld()}
\newcommand{\RMat}{\RMatrixfeld()}
\newcommand{\tsk}{$t$-$(s,k,\lambda_t)$}
\begin{document}
\author{Hans Havlicek\\
Institut f\"{u}r Diskrete Mathematik und Geometrie\\
Technische Universit\"{a}t Wien\\
Wiedner Hauptstra{\ss}e 8--10\\
A-1040 Wien\\
Austria \\
\texttt{havlicek@geometrie.tuwien.ac.at}}

\title{Divisible Designs, Laguerre Geometry, and Beyond}

\maketitle

\tableofcontents


\chapter{Introduction}

\begin{txt}

This is a revised and updated version of our lectures notes \cite{havl-06a}
from the \emph{Summer School on Combinatorial Geometry and Optimisation 2004
``Giuseppe Tallini''} which took place at the \emph{Catholic University of
Brescia}, Italy.

In these notes we aim at bringing together design theory and projective
geometry over a ring. Both disciplines are well established, but the results on
the interaction between them seem to be rare and scattered over the literature.
Thus our main goal is to present the basics from either side, to develop, or at
least sketch, the principal connections between them, and to make
recommendations for further reading. There is no attempt to provide
encyclopedic coverage with expansive notes and references.

In Chapter \ref{chap:DD} we start from the scratch with divisible designs.
Loosely speaking, a divisible design is a finite set of points which is endowed
with an equivalence relation and a family of distinguished subsets, called
blocks, such that no two distinct points of a block are equivalent.
Furthermore, there have to be several constants, called the parameters of the
divisible design, as they govern the basic combinatorial properties of such a
structure. Our exposition includes a lot of simple examples. Also, we collect
some facts about group actions. This leads us to a general construction
principle for divisible designs, due to \textsc{Spera}. This will be our main
tool in the subsequent chapters.

Next, in Chapter \ref{chap:P(R)} we take a big step by looking at the classical
Laguerre geometry over the reals. This part of the text is intended mainly as a
motivation and an invitation for further reading. Then we introduce our
essential geometric concept, the projective line over a ring. Although we shall
be interested in finite rings only, we do not exclude the infinite case. In
fact, a restriction to finite rings would hardly simplify our exposition. From
a ring containing a field, as a subring, we obtain a chain geometry. Again, we
take a very short look at some classical examples, like M\"{o}bius geometries. Up
to this point the connections with divisible designs may seem vague. However,
if we restrict ourselves to finite local rings then all the prerequisites
needed for constructing a divisible design are suddenly available, due to the
presence of a unique maximal ideal in a local ring.

Chapter \ref{chap:DD.GL} is entirely devoted to the construction of a divisible
design from the projective line over a finite local ring. The particular case
of a local algebra is discussed in detail, but little seems to be known about
the case of an arbitrary finite local ring, even though such rings are
ubiquitous. It is worth noting that the isomorphisms between certain divisible
design can be described in terms of Jordan isomorphisms of rings and
projectivities; strictly speaking this applies to divisible designs which stem
from chain geometries over local algebras with sufficiently large ground
fields. Geometric mappings arising from Jordan homomorphisms are rather
involved, and the related proofs have the tendency to be very technical; we
therefore present this material without giving a proof.

Chapter \ref{chap:FCG} can be considered as an outlook combined with an
invitation for further research. We sketch how one can obtain an equivalence
relation on the projective line over any ring via the Jacobson radical of the
ring. Recall that such an equivalence relation is one of the ingredients for a
divisible design. The maximal ideal of a local ring is its Jacobson radical, so
that we can generalise some of our results from a local to an arbitrary ring.
It remains open, however, if this equivalence relation could be used to
construct successfully a divisible design even when the ring is not local.
Finally, we collect some facts about finite chain geometries. Their
combinatorial properties are---in a certain sense---almost those of divisible
designs, but no systematic treatment seems to be known.
\end{txt}


\chapter{Divisible Designs}\label{chap:DD}

\section{Basic concepts and first examples}

\begin{nrtxt}
Suppose that a tournament is to take place with $v$ participants coming from
various teams, each team having the same number of members, say $s$. In order
to avoid trivialities, we assume $v>0$ and $s>0$. So there are $v/s$ teams. The
tournament consists of a number of games. In any game $k\geq 2$ participants
from different teams play against each other. Of course, there should be at
least two teams, i.~e., $2\leq v/s$.

The problem is to organise this tournament in such a way that all participants
are ``treated equally''. Strictly speaking, the objective is as follows:

\emph{The number of games in which any two members from different teams play
against each other has to be a constant value, say $\lambda_2$.}

In this way it is impossible that one participant would have the advantage of
playing over and over again against a small number of members from other teams,
whereas others would face many different counterparts during the games.

In the terminology to be introduced below, this problem amounts to constructing
a $2$-$(s,k,\lambda_2)$-divisible design with $v$ elements. The points of the
divisible design are the participants, the point classes are the teams, and the
blocks correspond to the games. Many of our examples will give solutions to
this problem for certain values of $s$, $k$, $\lambda_2$, and $v$.
\end{nrtxt}

\begin{nrtxt}
Throughout this chapter we adopt the following assumptions: $X$ is a finite set
with an equivalence relation ${\rel}\subset X\times X$. We denote by $[x]$ the
$\rel$-equivalence class of $x \in X$ and define
\begin{equation}\label{eq:def-S}
   \cS:=\{[x]\mid x \in X\}.
\end{equation}
A subset $Y$ of $X$ is called
\emph{$\rel$-transversal}\index{R-transversal@$\rel$-transversal
subset}\index{subset R-transversal@subset!$\rel$-transversal} if $\card (Y \cap
[x]) \le 1$ for all $x \in X$. Observe that here the word ``transversal''
appears in a rather unusual context, since it is not demanded that $Y$ meets
\emph{all\/} equivalence classes in precisely one element. Cf., however, the
definition of a transversal divisible design in \ref{:transversal.DD}.
\end{nrtxt}

\begin{defi}\label{def:DD}
A triple $\mathcal{D}=(X,\cB,\cS)$ is called a \tsk-\emph{divisible
design}\index{design!divisible} if there exist positive integers
$t,s,k,\lambda_t$ such that the following axioms hold:

\begin{itemize}

\item[(A)] $\mathcal{B}$ is a set of $\rel$-transversal subsets of $X$ with
$\card B=k$ for all  $B \in\mathcal{B}$.

\item[(B)] $\card [x] =s$ for all $x \in X$.

\item[(C)] For each $\rel$-transversal $t$-subset $Y\subset X$ there exist
exactly $\lambda_t$ elements of $\mathcal{B}$ containing $Y$.

\item[(D)] $t\leq \frac{v}{s}$, where $v:=\card  X$.
\end{itemize}
The elements of $X$ are called \emph{points}\index{point}, those of
$\mathcal{B}$ \emph{blocks}\index{block}, and the elements of $\cS$ \emph{point
classes}\index{point class}.
\end{defi}

\begin{txt}
We shall frequently use the shorthand ``DD''\index{DD} for ``divisible
design''. Sometimes we shall speak of a $t$-DD without explicitly mentioning
the remaining \emph{parameters}\index{parameters of a DD}\index{DD!parameters
of a} $s$, $k$, and $\lambda_t$. According to our definition, a block is merely
a subset of $X$. Hence the DDs which we are going to discuss are
\emph{simple}\index{DD!simple}\index{simple DD}, i.~e., we do not take into
account the possibility of ``repeated blocks''. Cf.\ \cite[p.~2]{beth+j+l-99a}
for that concept.

Since $\cS$ is determined by $\cR$ and vice versa, we shall sometimes also
write a divisible design in the form $(X,\cB,\rel)$ rather than $(X,\cB,\cS)$.
\end{txt}

\begin{nrtxt}\label{:basis.DD}
Let us write down some basic properties of a \tsk-DD. Since $s,t\geq 1$, axiom
(D) implies that
\begin{equation}\label{eq:v>0}
    \card X=v\geq st \geq 1
\end{equation}
or, said differently, that $X\neq \emptyset$. From this and (B) we infer that
\begin{equation}\label{eq:card-S}
  \card\cS=\frac vs\geq 1.
\end{equation}
Hence, by (D) and (B), there exists at least one $\rel$-transversal $t$-subset
of $X$, say $Y_0$. By virtue of (C), this $Y_0$ is contained in $\lambda_t\geq
1$ blocks so that
\begin{equation}\label{eq:card-B}
    \card\cB=:b\geq 1.
\end{equation}
So, since $\cB\neq\emptyset$, we can derive from axiom (A) and
(\ref{eq:card-S}) the inequality
\begin{equation}\label{eq:card-Block}
 \card B = k\leq \frac v s\, \mbox{ for all }B\in\cB.
\end{equation}
\end{nrtxt}

\begin{nrtxt}\label{:transversal.DD}
A divisible design is called
\emph{transversal}\index{DD!transversal}\index{transversal DD} if each block
meets all point classes, otherwise it is called
\emph{regular}\index{DD!regular}\index{regular DD}. Hence a \tsk-DD is
transversal if, and only if equality holds in (\ref{eq:card-Block}).

During the last decades there has been a change of terminology. Originally, the
point classes of a DD were called \emph{point groups\/}\index{point group} and
DDs carried the name \emph{group-divisible designs}\index{group-divisible
design}\index{design!group-divisible}. In order to avoid confusion with the
algebraic term ``group'', in \cite{beth+j+l-85} this name was changed to read
\emph{groop-divisible designs}\index{groop-divisible
design}\index{design!groop-divisible}. We shall not use any of these phrases.
\end{nrtxt}

\begin{nrtxt} Let us add in passing that some authors use slightly different
axioms for a DD in order to exclude certain cases that do not deserve
interest. For example, according to our definition $s=v$ is allowed, but this
forces $t=k=1$.

On the other hand, our axiom (D) is essential in order to rule out trivial
cases which would cause a lot of trouble. If we would allow $t>\frac vs$ then
there would not be any $\rel$-transversal $t$-subset of $X$, and (C) would hold
in a trivial manner. Such a value for $t$ would therefore have no meaning at
all for a structure $\cD=(X,\cB,\cS)$.
\end{nrtxt}

\begin{exas}\label{bsp:erste}
We present some examples of DDs.
\begin{enumerate}

\item\label{bsp:erste.pappos}

We consider the \emph{Pappos configuration}\index{Pappos configuration} in the
real projective plane which is formed by $9$ points and $9$ lines according to
Figure \ref{abb:pappos}.
\begin{figure}[h]\begin{center}\unitlength1cm\small
\begin{picture}(5,4.5)
 \put(0,0.5){\includegraphics[width=5cm]{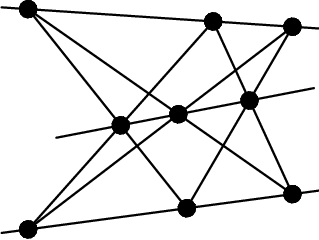}}
 \put(0.3,0.15){$p_1$}
 \put(2.8,0.5){$q_1$}
 \put(4.5,0.7){$r_1$}

 \put(4.15,2.4){$p_2$}
 \put(2.65,2.78){$q_2$}
 \put(1.3,2.4){$r_2$}

 \put(0.3,4.4){$p_3$}
 \put(3.2,4.2){$q_3$}
 \put(4.5,4.1){$r_3$}
\end{picture}\caption{\label{abb:pappos}Pappos configuration}
\end{center}\end{figure}
We obtain a $2$-$(3,3,1)$-DD, say $\cD$, as follows: Let
\begin{equation*}
  X:=\{p_1,p_2,p_3, q_1,q_2,q_3,r_1,r_2,r_3\},
\end{equation*}
i.~e., $v=9$. The blocks are, by definition, the $3$-subsets of collinear
points in $X$, so that $k=3$. We define three point classes, namely
$\{p_1,p_2,p_3\}$, $\{q_1,q_2,q_3\}$, and $\{r_1,r_2,r_3\}$, each with
$s=3$ elements. Then for any two points from distinct point classes are is
a unique block containing them. So $t=2$ and $\lambda_2=1$. This DD is
transversal.

\item\label{bsp:erste.oktaeder}

Let us take a \emph{regular octahedron}\index{octahedron!regular}\index{regular
octahedron} in the Euclidean $3$-space (Figure \ref{abb:oktaeder}), and let us
turn it into a DD as follows:
\begin{figure}[h]\begin{center}\unitlength1cm
\begin{picture}(5,3.5)
 \put(0,0){\includegraphics[width=5cm]{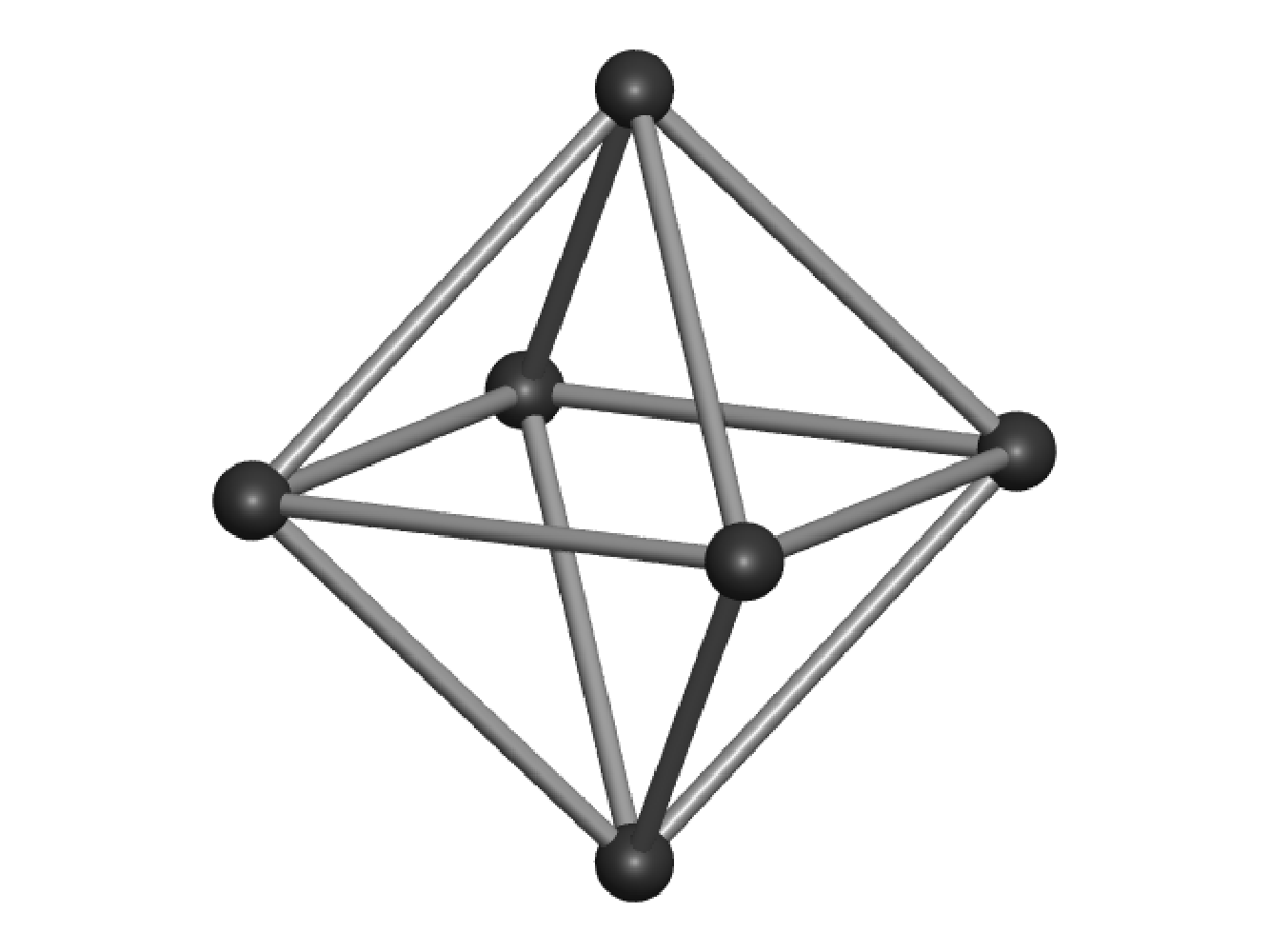}}
\end{picture}
\caption{\label{abb:oktaeder}Octahedron}
\end{center}\end{figure}
Denote by $X$ the set of all $v=6$ vertices of the octahedron. For all $p,q\in
X$ we put $p\rel q$ if, and only if, $p$ and $q$ are opposite vertices. Hence
$s=2$. The blocks are defined as the triangular faces, whence $k=3$. So we get
a transversal $3$-$(2,3,1)$-divisible design.

\item\label{bsp:erste.PG23}

Our next example is the \emph{projective
plane}\index{plane!projective}\index{projective plane} of order three which is
depicted on the left hand side of Figure \ref{abb:PG23}. It is a
$2$-$(1,4,1)$-DD with $v=13$ points. There are $13$ blocks; they are given by
those subsets of the point set which consist of $k=4$ points on a common curve.
(Some of these curves are segments, others are not.) There are $13$ point
classes, because $s=1$ means that all point classes are singletons.
\par
We shall not need the definition of a finite projective plane and refer to
\cite[p.~6]{beth+j+l-99a}. Let us add, however, that in the theory of
projective planes one speaks of \emph{lines\/}\index{line} rather than blocks.
The \emph{order\/}\index{projective plane!order of a}\index{order!of a
projective plane} of a projective plane is defined to be $k-1$ if there are $k$
points on one (or, equivalently, on every) line.

Let us remove one point from the point set of this projective plane. Also, let
us redefine the point classes as the four truncated lines (illustrated by thick
segments and a thick circular arc), the other nine lines remain as blocks. This
yields a $2$-$(3,4,1)$-DD.

If we delete one line and all its points from the projective plane of order
three then we obtain the \emph{affine plane}\index{affine
plane}\index{plane!affine} of order three. Each of the twelve remaining lines
gives rise to a block with three points, the point classes are defined as
singletons. As before, one speaks of (affine) lines rather than blocks in the
context of affine planes. Observe that the \emph{order}\index{affine
plane!order of an}\index{order!of an affine plane} of an affine plane is just
the number of points on one (or, equivalently, on every) line. See
\cite[p.~8]{beth+j+l-99a} for further details.

This affine plane is a $2$-$(1,3,1)$-DD with $v=9$ points and, as before, all
point classes are singletons. See the third picture in Figure \ref{abb:PG23}.
Two lines of an affine plane are called
\emph{parallel}\index{lines!parallel}\index{parallel lines} if they are
identical or if they have no point in common.

Finally, we change the set of lines and the set of point classes of this affine
plane as follows: We exclude three mutually parallel lines from the line set,
turn them into point classes, and disregard the one-element point classes of
the underlying affine plane. The remaining nine lines are considered as blocks.
In this way a $2$-$(3,3,1)$-DD with $v=9$ points is obtained. On the right hand
side of Figure \ref{abb:PG23} the bold vertical segments represent the point
classes.
\begin{figure}[h]\begin{center}\unitlength1cm
\begin{picture}(11,3.2)
 \put(0.5,0){\includegraphics[width=11cm]{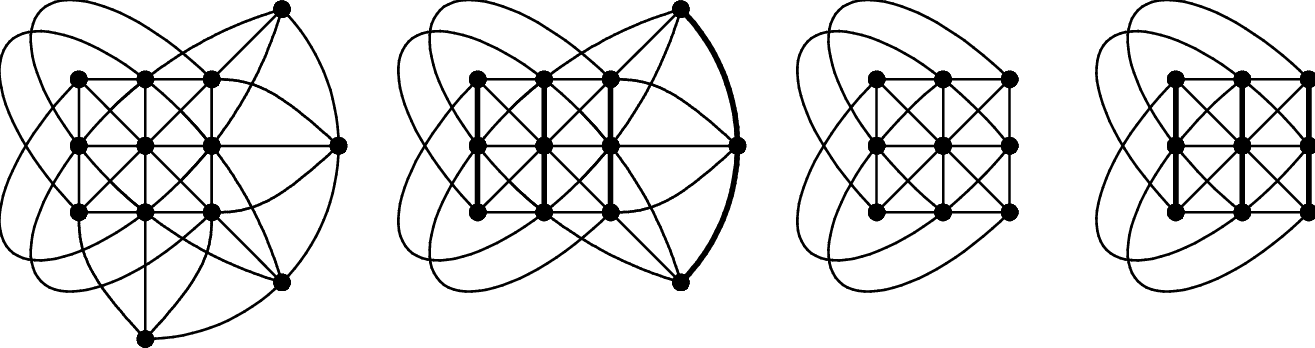}}
\end{picture}
\caption{\label{abb:PG23}DDs from the projective plane of order $3$}
\end{center}\end{figure}

\item\label{bsp:erste.PG22}

We proceed as in the previous example, but starting with the projective plane
of order two which is a $2$-$(1,3,1)$-DD with $v=7$ points. In this way we
obtain a $2$-$(2,3,1)$-DD with $v=6$ points, a $2$-$(1,2,1)$-DD with $v=4$
points (the affine plane of order $2$), and a $2$-$(2,2,1)$-DD with $v=4$
points. See Figure \ref{abb:PG22}.
\begin{figure}[h]\begin{center}\unitlength1cm
\begin{picture}(11,2.75)
 \put(0.5,0){\includegraphics[width=11cm]{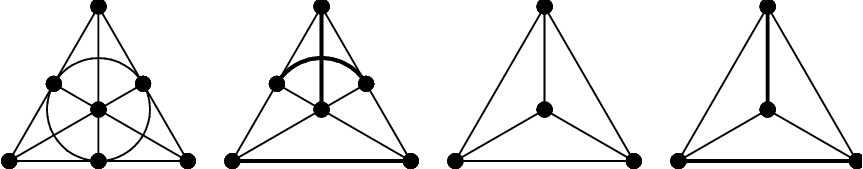}}
\end{picture}
\caption{\label{abb:PG22} DDs from the projective plane of order $2$}
\end{center}\end{figure}
\end{enumerate}
\end{exas}

\begin{txt}
It is easy to check that the $3$-DD from Example (\ref{bsp:erste.oktaeder}) is
also a $2$-DD; likewise all our $2$-DDs are at the same time $1$-DDs. Thus the
previous examples illustrate the following result:
\end{txt}

\begin{thm}\label{thm:i-DD}
Let $\cD$ be a \tsk-DD with $t\ge 2$ and let $i$ be an integer such that $1\leq
i\leq t$. Then $\cD$ is also an $i$-$(s,k,\lambda_i)$-DD with
\begin{equation}\label{eq:lambda_i}
    \lambda_i=\lambda_t\,\frac{\Mat2{vs^{-1} -i\\t-i} s^{t-i}}{\Mat2{k-i\\t-i}}\,.
\end{equation}
\end{thm}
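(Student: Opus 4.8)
The plan is a standard two-way counting argument, generalising the classical fact that a $t$-design is also an $i$-design. I fix an arbitrary $\rel$-transversal $i$-subset $Y\subset X$ and let $\lambda_i(Y)$ denote the number of blocks containing $Y$; the goal is to show that $\lambda_i(Y)$ equals the right-hand side of (\ref{eq:lambda_i}), independently of the choice of $Y$. (Such a $Y$ exists because $i\le t\le\frac vs$, so one may pick $i$ distinct point classes and one point in each.)

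To this end I would count, in two ways, the set $\cP$ of pairs $(Z,B)$ in which $Z$ is a $\rel$-transversal $t$-subset with $Y\subseteq Z$ and $B\in\cB$ is a block with $Z\subseteq B$. Counting by $Z$ first: each admissible $Z$ lies in exactly $\lambda_t$ blocks by axiom (C), so $\card\cP=\lambda_t\cdot N(Y)$, where $N(Y)$ is the number of $\rel$-transversal $t$-subsets containing $Y$. Counting by $B$ first: every block is itself $\rel$-transversal by (A), so any $t$-subset $Z$ with $Y\subseteq Z\subseteq B$ is automatically $\rel$-transversal; hence for a fixed block $B\supseteq Y$ the admissible sets $Z$ are exactly the $t$-subsets of $B$ containing $Y$, of which there are $\binom{k-i}{t-i}$. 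Thus $\card\cP=\lambda_i(Y)\binom{k-i}{t-i}$. Here $k\ge t$, because by \ref{:basis.DD} some block contains a $\rel$-transversal $t$-subset, and therefore $\binom{k-i}{t-i}\ge 1$.

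The crux is then to evaluate $N(Y)$. Since $Y$ is $\rel$-transversal it meets exactly $i$ of the $\frac vs$ point classes, each of size $s$ by (B). To extend $Y$ to a $\rel$-transversal $t$-subset one must adjoin $t-i$ further points lying in $t-i$ pairwise distinct classes, none of them among the $i$ classes already met; choosing these classes and then one point from each gives $N(Y)=\binom{v/s-i}{t-i}\,s^{t-i}$, which depends only on $v,s,t,i$ and not on $Y$. Equating the two expressions for $\card\cP$ and solving for $\lambda_i(Y)$ yields precisely (\ref{eq:lambda_i}).

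Finally I would record that the $i$-DD structure is $(X,\cB,\cS)$ with the same $s$ and $k$: axioms (A) and (B) are inherited unchanged, and (D) holds since $i\le t\le\frac vs$. As $\lambda_i(Y)$ is a cardinality it is a non-negative integer; the formula shows it is positive (all three factors and the denominator are at least $1$) and constant over all $Y$, which simultaneously establishes (C) and the fact that $\lambda_i$ is a positive integer. I expect the only genuinely delicate point to be the bookkeeping for $N(Y)$, in particular the observation that transversality of $Z$ inside a block is automatic, so that the block-side count collapses to the pure binomial $\binom{k-i}{t-i}$ with no transversality correction.
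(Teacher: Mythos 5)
Your proposal is correct and follows essentially the same double-counting argument as the paper: the paper counts pairs $(Y,B)$ where $Y$ is the $(t-i)$-set extending the fixed transversal $i$-set to a transversal $t$-set, which is trivially equivalent to your count of pairs $(Z,B)$ with $Z$ the full $t$-set. Your additional remarks (that transversality of $Z$ inside a block is automatic, that $k\ge t$ so the denominator is nonzero, and that axioms (A), (B), (D) carry over) are left implicit in the paper but are accurate.
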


\begin{proof} 
We fix one transversal $i$-subset $I$. The proof will be accomplished by
counting in two ways the number of pairs $(Y,B)$, where $Y$ is a
$(t-i)$-subset of $X$ such that $I\cup Y$ is a transversal $t$-subset, and
where $B$ is a block containing $I\cup Y$.

On the one hand, let us single out one of the $\lambda_i$ blocks containing
$I$. Then there are
\begin{equation*}
  \Mat2{k-i\\t-i}
\end{equation*}
possibilities to choose a $Y$ within that particular block.

On the other hand, to select an arbitrary $Y$ amounts to the following: First
choose $t-i$ point classes out of the $vs^{-1}-i$ point classes that are
disjoint from $I$ \big(cf.\ (\ref{eq:card-S})\big), and then choose in each of
these point classes a single point (out of $s$). Hence there are precisely
\begin{equation*}
  \Mat2{vs^{-1} -i\\t-i}s^{t-i}
\end{equation*}
ways to find such a $Y$. For every $Y$ there are $\lambda_t$ pairs $(Y,B)$
with the required property.

Altogether we obtain
\begin{equation}\label{eq:doppelt}
  \lambda_{i}\Mat2{k-i\\t-i}=
  \lambda_t\Mat2{vs^{-1} -i\\t-i}s^{t-i}
\end{equation}
which completes the proof.
\end{proof}

\begin{nrtxt}
Theorem \ref{thm:i-DD} enables us to calculate several other
\emph{parameters}\index{parameters of a DD}\index{DD!parameters of a} of a
\tsk-DD. Letting $i=0$ in formula (\ref{eq:lambda_i}) provides the number of
blocks, i.~e.\
\begin{equation}\label{eq:blockanzahl}
      b:=\card \cB=\lambda_t\,\frac{\Mat2{vs^{-1}\\t} s^{t}}{\Mat2{k\\t}}\,.
\end{equation}
Likewise, for $i=1$ we obtain the number
\begin{equation}\label{eq:def-r}
    r:=\lambda_1
\end{equation}
of blocks through a point which is therefore a constant. Provided that $i=t-1$
formula (\ref{eq:lambda_i}) reads
\begin{equation}\label{eq:lambda_t-1}
  \lambda_{t-1}=\lambda_t\,\frac{v-st+s}{k- t+1}\,.
\end{equation}
By Theorem \ref{thm:i-DD}, formula (\ref{eq:lambda_t-1}) remains valid if $t$
is replaced with an integer $t'$, subject to the condition $1\leq t'\leq t$.
Hence we infer the equation
\begin{equation}\label{eq:bk=rv}
  bk=rv
\end{equation}
by letting $t'=1$. For $t\geq 2$ we may let $t'=2$ which gives
\begin{equation}\label{eq:lambda2}
  r({k-1})=\lambda_2({v-s}).
\end{equation}
The last two equations are just particular cases of formula
(\ref{eq:doppelt}).
\end{nrtxt}

\begin{nrtxt}
A divisible design with $s=1$ is called a \emph{design}\index{design}; we refer
to \cite{colburn+d-07a}, \cite{hugh+p-88}, \cite{lindner+r-09a}, or the two
volumes \cite{beth+j+l-99a} and \cite{beth+j+l-99b}. In design theory the
parameter $s$ is not taken into account, and a $t$-$(1,k,\lambda_t)$-DD with
$v$ points is often called a $t$-$(v,k,\lambda_t)$-design. Of course, this is a
\emph{different notation\/} and we urge the reader not to draw the erroneous
conclusion ``$v=s$'' when comparing these lecture notes with a book on design
theory.

We have already met examples of designs in Examples \ref{bsp:erste}
(\ref{bsp:erste.PG23}) and (\ref{bsp:erste.PG22}), namely the projective and
affine planes of orders three and two. However, designs are not the topic of
this course. Instead, we shall focus our attention on the case when $s>1$.
\end{nrtxt}

\begin{nrtxt}
If $\cD=(X,\cB,\cS)$ is a \tsk-DD and $\cD'=(X',\cB',\cS')$ is a
$t'$-$(s',k',\lambda'_{t'})$-DD then an
\emph{isomorphism\/}\index{isomorphism!of DDs}\index{DDs!isomorphism of} is a
bijection
\begin{equation*}
    \varphi:X\to X' : p\mapsto p^\varphi
\end{equation*}
such that
\begin{eqnarray}
  B\in\cB &\Leftrightarrow& B^\varphi\in\cB'\label{eq:iso.B} \\
  S\in\cS &\Leftrightarrow& S^\varphi\in\cS'.\label{eq:iso.S}
\end{eqnarray}
Clearly, the inverse mapping of an isomorphism is again an isomorphism. If the
product of two isomorphisms is defined (as a mapping) then it is an
isomorphism. The set of all isomorphisms of a DD onto itself, i.~e.\ the set of
all \emph{automorphisms}\index{automorphism!of a DD}\index{DD!automorphism of
a}, is a group under composition of mappings.
\end{nrtxt}

\begin{nrtxt}
Suppose that there exists an isomorphism of a \tsk-DD $\cD$ onto a
$t'$-$(s',k',\lambda'_{t'})$-DD $\cD'$. Such DDs are said to be
\emph{isomorphic}. Then
\begin{equation*}
    v=v',\; s=s'\mbox{, and }k=k'.
\end{equation*}
However, in view of Theorem \ref{thm:i-DD} we may have $t\neq t'$. Thus we
impose the extra condition that the parameters $t$ and $t'$ are maximal, i.~e.,
$\cD$ is a $t$-DD but not a $(t+1)$-DD, and likewise for $\cD'$. Then, clearly,
\begin{equation*}\label{}
  t=t'\mbox{ and }\lambda_t=\lambda'_{t'}.
\end{equation*}
\end{nrtxt}

\begin{nrtxt}
Condition (\ref{eq:iso.S}) in the definition of an isomorphism can be replaced
with the seemingly weaker but nevertheless equivalent condition
\begin{equation}\label{eq:iso.S-scharf}
   S\in\cS \;\Rightarrow\; S^\varphi\in\cS'\mbox{:}
\end{equation}
Suppose that we are given a bijection $\varphi:X\to X'$ satisfying
(\ref{eq:iso.S-scharf}). If $S^\varphi\in\cS'$ for some subset $S$ of $X$ then
there is an $x\in S$. Hence $x^\varphi\in S^\varphi\cap[x]^\varphi$ with
$[x]^\varphi\in\cS'$ by (\ref{eq:iso.S-scharf}). Since two equivalence classes
with a common element are identical, we get $S^\varphi=[x]^\varphi$ and,
finally, $S=[x]\in\cS$. In sharp contrast to this result, the equivalence sign
in (\ref{eq:iso.B}) is essential. Cf.\ Example \ref{bsp:iso-gegenbsp} below.

We may even drop condition (\ref{eq:iso.S}) in the following particular
situation: Let $\varphi:X\to X'$ be a bijection of a $2$-DD $\cD$ onto a $2$-DD
$\cD'$ such that (\ref{eq:iso.B}) holds. Then, for all $x,y\in X$ with $x\neq
y$ we have $x\rel y$ if, and only if, there exists a block containing $x$ and
$y$. The same kind of characterisation applies to $\cD'$. Hence $x\rel y$ is
equivalent to $x^\varphi\rel' y^\varphi$ for all $x,y\in X$.
\end{nrtxt}

\begin{exa}\label{bsp:iso-gegenbsp}
Let us consider once more a \emph{regular
octahedron}\index{octahedron!regular}\index{regular octahedron} in the
Euclidean $3$-space. We turn the set of its vertices into a $2$-DD with $6$
points in two different ways (Figure \ref{abb:2oktaeder}): For both DDs the
point classes are the $2$-sets of opposite vertices. However, the blocks are
different. Firstly, we take \emph{all\/} $8$ triangular faces as blocks (left
image). This gives a $2$-$(2,3,2)$-DD which is also a $3$-DD. Cf.\ Example
\ref{bsp:erste} (\ref{bsp:erste.oktaeder}). Secondly, only $4$ triangular faces
(given by the shaded triangles in the right image) are considered as blocks, so
that a $2$-$(2,3,1)$-DD is obtained.
\begin{figure}[h]\begin{center}\unitlength1cm
\begin{picture}(10,3.5)
 \put(5,0){\includegraphics[width=5cm]{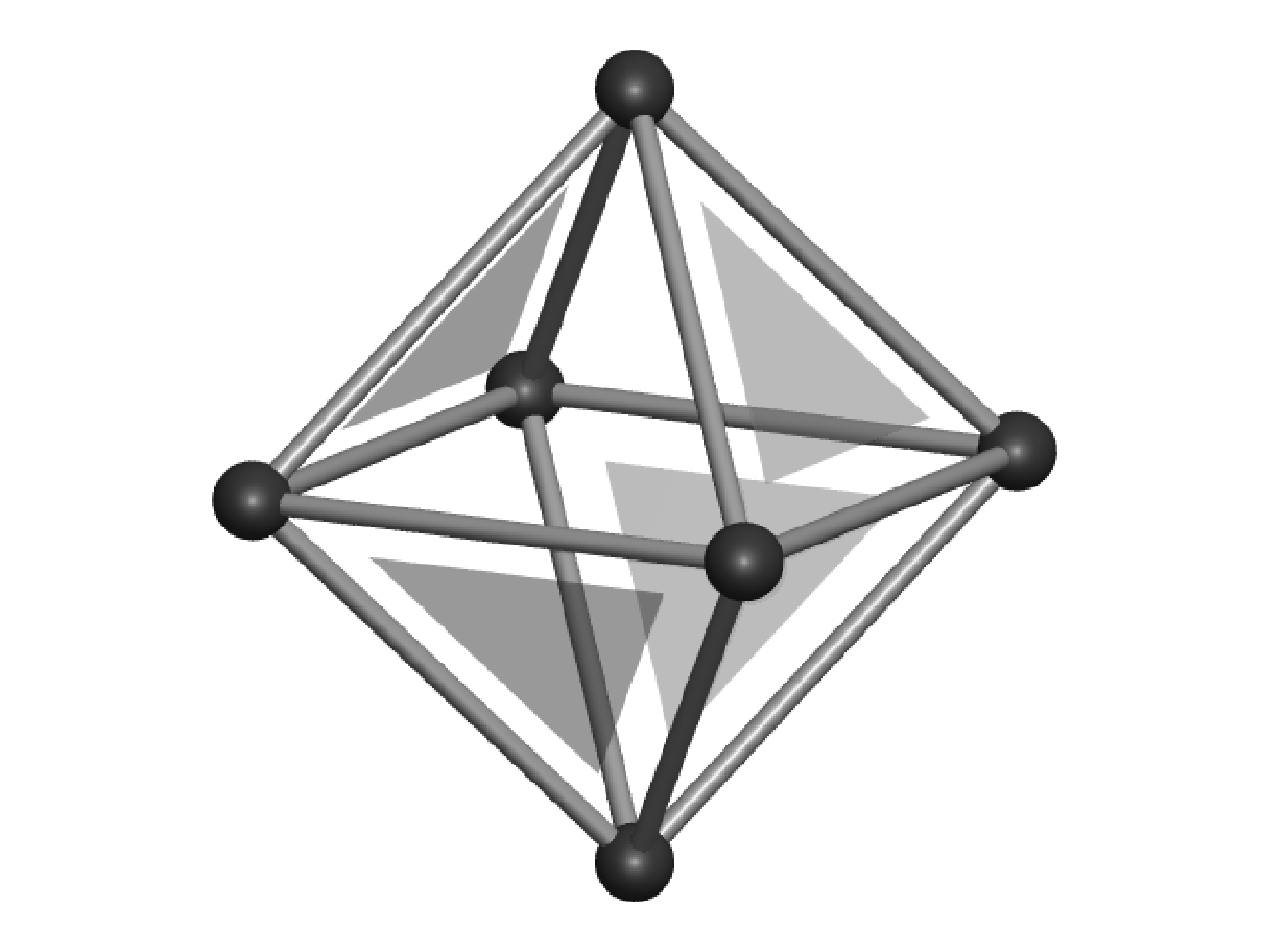}}
 \put(0,0){\includegraphics[width=5cm]{tetraeder.eps}}
\end{picture}
\caption{\label{abb:2oktaeder}Two non-isomorphic $2$-DDs from an octahedron}
\end{center}\end{figure}

Observe that the identity mapping $\id_X$ maps every block of the second
design onto a block of the first design, but not vice versa. Hence a
bijection between the point sets of DDs which preserves point classes in both
directions and blocks in one direction only, need not be an isomorphism.
\end{exa}

\begin{exer}
Which of the DDs from Examples \ref{bsp:erste} and \ref{bsp:iso-gegenbsp} are
isomorphic?
\end{exer}

\section{Group actions}

\begin{nrtxt}
Let us recall that all bijections (or \emph{permutations}\/)\index{permutation}
of a finite set\footnote{Most of the results from this section remain true for
an infinite set $X$.} $X$ form the \emph{symmetric group\/}\index{symmetric
group}\index{group!symmetric} $S_X$. If $G$ is any group then a homomorphism
\begin{equation*}
    \alpha:G\to S_X : g\mapsto g^\alpha
\end{equation*}
is called a \emph{permutation representation\/}\index{permutation
representation}\index{representation} of $G$. In this case the group $G$ is
also said to \emph{operate\/}\index{group operation} or
\emph{act\/}\index{group action} on $X$ via $\alpha$. In fact, each $g\in G$
yields the bijection
\begin{equation*}
    g^\alpha : X\to X: x\mapsto x^{(g^\alpha)}.
\end{equation*}
Whenever $\alpha$ is clear from the context, then we shall write $x^g$ for the
image of $x$ under the permutation $g^\alpha$. Thus, if the composition in $G$
is written multiplicatively, we obtain
\begin{equation*}
  x^{(gh)} =(x^g)^h \mbox{ for all }x\in X\mbox{ and all }g,h\in G.
\end{equation*}

Provided that $\alpha$ is injective the representation is called
\emph{faithful}\index{representation!faithful}\index{faithful representation}.
So for a faithful representation we have $\ker\alpha=\{1_G\}$ as is kernel, and
we can identify $G$ with its image $G^\alpha$. However, in most of our examples
the representation will not be faithful, i.~e., there will be distinct elements
of $G$ which yield the same permutation on $X$.
\end{nrtxt}

\begin{nrtxt}\label{:aktion.eigensch}
For the remaining part of this section we suppose that $G$ acts on $X$ (via
$\alpha$).

For each $x\in X$ we write $x^G:=\{x^g\mid g\in G\}$ for the
\emph{orbit}\index{orbit} of $x$ under $G$. The set of all such orbits is a
partition of $X$. If $X$ itself is an orbit then $G$ is said to operate
\emph{transitively\/}\index{group action!transitive}\index{transitive group
action} on $X$. This means that for any two elements $x,y\in X$ there is at
least one $g\in G$ with $x^g=y$. If, moreover, this $g$ is always uniquely
determined then the action of $G$ is called \emph{regular}\index{regular group
action}\index{group action!regular} or \emph{sharply transitive}\index{group
action!sharply transitive}\index{sharply transitive group action}. If $G$
operates regularly on $X$ then the representation is necessarily faithful,
since every $g\in\ker\alpha$ has the property $x^g=x$ for all $x\in X$, whence
$g=1_G$.

The given group $G$ acts also in a natural way on certain other sets which are
associated with $X$. E.g., for every non-negative integer $t$, the group $G$
acts on the $t$-fold product $X^t$ by
\begin{equation*}
  (x_1,x_2,\ldots,x_t)^g:= (x_1^g,x_2^g,\ldots,x_t^g).
\end{equation*}
If this is a transitive action on the subset of $t$-tuples with \emph{distinct
entries\/} from $X$ then one says that $G$ acts
\emph{$t$-transitively\/}\index{group
action!t-transitive@$t$-transitive}\index{t-transitive group
action@$t$-transitive group action} on $X$.

Moreover, for $t\leq \card X$, the group $G$ acts on the (non empty) set
$\SMat2{ X\\t}$ of all $t$-subsets of $X$ by
\begin{equation*}
  \{x_1,x_2,\ldots,x_t\}^g:=\{x_1^g,x_2^g,\ldots,x_t^g\}.
\end{equation*}
In case that this is a transitive action, the group $G$ is said to act
\emph{$t$-homogeneously \/}\index{group
action!t-homogeneous@$t$-homogeneous}\index{t-homogeneous group
action@$t$-homogeneous group action} on $X$.

Similarly, $G$ acts on the power set of $X$.

Later, we shall be concerned with $t$-homogeneous and $t$-transitive group
actions. Thus the following result, due to \textsc{Donald Livingstone} and
\textsc{Ascher Wagner} \cite{livi+w-65}, deserves our interest, even though we
are not going to use it.
\end{nrtxt}

\begin{thm}\label{thm:liv.wagner}
Suppose that the action of a group $G$ on a finite set $X$ is $t$-homogeneous,
where $4\leq 2t\leq\card X$. Then $G$ acts $(t-1)$-transitively on $X$. If,
moreover, $t>4$ then $G$ even acts $t$-transitively on $X$.
\end{thm}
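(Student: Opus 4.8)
The plan is to convert the whole statement into linear algebra over $\bQ$ and to read off orbit numbers as dimensions of fixed spaces. Put $n:=\card X$ and, for $0\le k\le n$, let $M_k$ be the permutation $\bQ G$-module on the set $\binom{X}{k}$ of $k$-subsets and $N_k$ the permutation module on the ordered $k$-tuples of distinct points. Then the number of $G$-orbits on $k$-subsets is $\dim M_k^G$ and the number of $G$-orbits on ordered $k$-tuples is $\dim N_k^G$; by definition $G$ is $k$-homogeneous iff $\dim M_k^G=1$ and $k$-transitive iff $\dim N_k^G=1$. Equivalently, $(t-1)$-transitivity says that the setwise stabiliser of a $(t-1)$-set induces the full symmetric group on those $t-1$ points. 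So the task is to pass from $\dim M_t^G=1$ to $\dim N_{t-1}^G=1$, and in the sharper case to $\dim N_t^G=1$.

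First I would dispose of the easy half, the descent of homogeneity. For $(k-1)+k\le n$ the ``up'' map $M_{k-1}\to M_k$ sending a $(k-1)$-set to the formal sum of the $k$-sets containing it is injective; this is the classical full-rank statement for set-inclusion matrices. Being a $\bQ G$-homomorphism, it restricts to an injection $M_{k-1}^G\to M_k^G$, so $\dim M_{k-1}^G\le\dim M_k^G$. Since $\dim M_0^G=1$ and, by hypothesis, $\dim M_t^G=1$, the whole chain collapses: $\dim M_k^G=1$ for every $0\le k\le t$. Thus $t$-homogeneity with $2t\le n$ already forces $k$-homogeneity for all $k\le t$.

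The real work is upgrading homogeneity to transitivity. Here I would use the $S_X$-decomposition of the permutation modules. For $2k\le n$ one has $M_k\cong\bigoplus_{j=0}^{k}S^{(n-j,j)}$, a multiplicity-free sum of two-row Specht modules whose trivial summand $S^{(n)}$ contributes the constant $1$; hence $\dim M_k^G=1$ for all $k\le t$ says exactly that $(S^{(n-j,j)})^G=0$ for $1\le j\le t$. On the other hand $N_{t-1}=\mathrm{Ind}_{S_{n-t+1}}^{S_X}\mathbf 1$, and by Young's rule it is a sum of Specht modules $S^{\lambda}$ with defect $n-\lambda_1\le t-1$, the trivial module occurring once. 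Therefore $\dim N_{t-1}^G=1$ will follow once one shows that \emph{every} nontrivial $S^{\lambda}$ of defect at most $t-1$ satisfies $(S^{\lambda})^G=0$. This is the crux and the main obstacle: homogeneity only hands us the two-row shapes, and one must propagate the vanishing to all small-defect shapes such as $S^{(n-2,1,1)}$. The natural route is an induction on $n$ that passes to a point stabiliser, projecting a would-be $G$-fixed vector through the branching $S^{\lambda}\!\mid_{S_{X\setminus\{x\}}}=\bigoplus_{\mu}S^{\mu}$ into a lower-defect constituent; but this forces one to prove, along the way, that the point stabiliser inherits the relevant homogeneity on $X\setminus\{x\}$ (equivalently, a flag-transitivity that is \emph{not} automatic from transitivity on points and on $t$-sets). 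Establishing that inheritance is exactly where the difficulty concentrates, and I expect it, rather than the bookkeeping, to be the hard part.

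For the sharper conclusion when $t>4$ I would run the same scheme with $N_t$ in place of $N_{t-1}$, now needing $(S^{\lambda})^G=0$ for all nontrivial $\lambda$ of defect at most $t$. This includes the two-row shape $S^{(n-t,t)}$, which $t$-homogeneity already kills, together with the non-two-row shapes of defect exactly $t$. The extra hypothesis $t>4$ enters precisely at these top-defect modules: for $t\le 4$ there genuinely exist $t$-homogeneous groups that fail to be $t$-transitive, so some small-defect Specht module must retain a fixed vector, and only $t>4$ excludes these configurations. Accordingly I would expect the proof to split at this point, with the delicate estimates ruling out $G$-fixed vectors in the defect-$t$ Specht modules being the most demanding step, while the reductions of the first two paragraphs remain purely formal.
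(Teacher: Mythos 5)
First, a point of reference: the paper itself contains \emph{no} proof of this theorem. It is quoted as a classical result of Livingstone and Wagner, with pointers to \cite{livi+w-65}, \cite{wiel-67}, \cite{demb-97}, and \cite{luen-69}, so there is no in-text argument to measure yours against; your proposal has to stand on its own. Judged that way, it proves only the easy preliminary. Your second paragraph is correct and complete: for $2k\le\card X$ the inclusion map $M_{k-1}\to M_k$ is injective (full rank of the set-inclusion matrix), so $\dim M_{k-1}^G\le\dim M_k^G$ and $t$-homogeneity forces $k$-homogeneity for all $k\le t$. But that is descent of \emph{homogeneity}, whereas the theorem asserts $(t-1)$-\emph{transitivity}, and every step needed to cross that gap is left open in your own words. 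You correctly reduce $(t-1)$-transitivity to the vanishing of $(S^{\lambda})^G$ for all nontrivial $\lambda$ of defect at most $t-1$, correctly observe that homogeneity only kills the two-row shapes, propose an induction through point stabilisers, and then state that the inheritance of homogeneity by the point stabiliser ``is exactly where the difficulty concentrates'' and that you expect it to be the hard part. That is an accurate diagnosis of where the theorem lives, but it is not an argument: nothing is offered to show the stabiliser of a point is suitably homogeneous on the remaining points, nor to kill the fixed vectors in shapes such as $S^{(n-2,1,1)}$. The same applies verbatim to the $t>4$ clause, where you again only predict where the estimates would have to go.

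It is also worth saying that the route you sketch is not known to carry the full weight of the result, so the gap is not mere bookkeeping. The module-theoretic part is essentially Wielandt's short proof of the orbit inequality on subsets; the upgrade to $(t-1)$-transitivity in Livingstone--Wagner proceeds by an induction that analyses how the setwise stabiliser of a $t$-set acts \emph{on the points of that set} -- a genuinely group-theoretic step -- and the $t>4$ statement rests on further structural facts about multiply transitive groups. You correctly note that for $t\le 4$ there exist $t$-homogeneous, non-$t$-transitive groups, so some defect-$t$ Specht module must retain a fixed vector there; but no mechanism is given for why $t>4$ excludes this. In summary: the framework is reasonable and the first reduction is right, but both substantive assertions of the theorem remain unproved in the proposal.
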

\begin{txt}
See also \cite{wiel-67} for a short proof,  \cite[p.~92]{demb-97}, and
\cite[\S~16]{luen-69}.
\end{txt}

\begin{nrtxt}
An equivalence relation $\rel$ on $X$ is called \emph{$G$-invariant} if
\begin{equation}\label{eq:G-invar}
        x\rel y \Rightarrow x^g\rel y^g\mbox{ for all }x,y\in X
        \mbox{ and all }g\in G.
    \end{equation}
Then
\begin{equation}\label{eq:G-invar.scharf}
        x\rel y \Leftrightarrow x^g\rel y^g\mbox{ for all }x,y\in X
        \mbox{ and all }g\in G
\end{equation}
follows immediately, by applying (\ref{eq:G-invar}) to $x^g\rel y^g$ and
$g^{-1}$. The finest and the coarsest equivalence relation on $X$, i.~e.\ the
diagonal $\diag(X\times X)=\{(x,x)\mid x\in X\}$ and $X\times X$, obviously are
$G$-invariant equivalence relations on $X$.

Suppose now that $G$ acts transitively on $X$. If $\diag(X\times X)$ and
$X\times X$ are the only $G$-invariant equivalence relations on $X$ then the
action of $G$ is said to be \emph{primitive\/}\index{primitive group
action}\index{group action!primitive}; otherwise the action of $G$ is called
\emph{imprimitive\/}\index{group action!imprimitive}\index{imprimitive group
action}.

Suppose that $G$ acts imprimitively on $X$. A subset $S\subset X$ is called a
\emph{block of imprimitivity\/}\index{block!of
imprimitivity}\index{imprimitivity!block of} if it is an equivalence class of a
$G$-invariant equivalence relation, say $\rel$, which is neither $\diag(X\times
X)$ nor $X\times X$. Thus a block of imprimitivity is a subset $S$ of $X$ such
that $\card S>1$, $S\neq X$, and for all $g\in G$ we have either $S^g=S$ or
$S^g\cap S =\emptyset$.
\end{nrtxt}

\begin{nrtxt}
Given a subset $Y\subset X$ the \emph{setwise
stabiliser}\index{stabilizer!setwise}\index{setwise stabiliser} of $Y$ in $G$
is the set $G_Y$, say, of all $g \in G$ satisfying $Y^g=Y$. This stabiliser is
a subgroup of $G$. The \emph{pointwise stabiliser}\index{pointwise
stabiliser}\index{stabilizer!pointwise} of $Y\subset X$ in $G$ is the set of
all $g\in G$ such that $y^g=y$ for all $y\in Y$. This pointwise stabiliser is
also a subgroup of $G$ and, clearly, it is a normal subgroup of the setwise
stabiliser $G_Y$.

If $y\in X$ then we simply write $G_y$ instead of $G_{\{y\}}$. With this
convention, the mapping $y^g\mapsto G_y g$ is a bijection of the orbit $y^G$
onto the set of right cosets of $G_y$ in $G$, whence we obtain the fundamental
formula
\begin{equation}\label{eq:index.stabil}
        \card y^G =  \frac{\card G}{\card G_y}\,.
\end{equation}
It links cardinality of the orbit $y^G$ with the index of the stabiliser $G_y$
in $G$, i.~e.\ the number of right (or left) cosets of $G_y$ in $G$.

We refer to \cite[pp.~71--79]{jac-85} for a more systematic account on group
actions.
\end{nrtxt}

\section{A theorem of Spera}

\begin{nrtxt}
One possibility to construct divisible designs is given by the following
Theorem which is due to \textsc{Antonino Giorgio Spera}\index{theorem!of Spera}
\cite[Proposition~3.2]{spera-92a}. A similar construction for designs can be
found in \cite[Proposition~4.6]{beth+j+l-99a}.

The ingredients for this construction are a finite set $X$ with an equivalence
relation $\rel$ on its elements, a finite group $G$ acting on $X$, and a
so-called \emph{base block}\index{base block} (or \emph{starter
block}\index{starter block}\index{block!starter}) $B_0$, say. Its orbit under
the action of $G$ will then be our set of blocks. More precisely, we can show
the following:
\end{nrtxt}

\begin{thm}\label{thm:spera}
Let $X$ be a finite set which is endowed with an equivalence relation\/ $\rel$;
the corresponding partition is denoted by\/ $\cS$. Suppose, moreover, that $G$
is a group acting on $X$, and assume that the following properties hold:
\begin{enumerate}
    \item The equivalence relation\/ $\rel$ is $G$-invariant.

    \item All equivalence classes of\/ $\cR$ have the same cardinality, say $s$.
    \item
    The group $G$ acts transitively on the set of\/ $\rel$-transversal
    $t$-subsets of $X$ for some positive integer $t\leq\card\cS$.
\end{enumerate}
Finally, let $B_0$ be an $\rel$-transversal $k$-subset of $X$ with $t \le k$.
Then
\begin{equation*}
    (X,\cB,\cS) \mbox{ with } \cB:=B_0^G = \{B_0^g \mid g\in G\}
\end{equation*}
is a \tsk-divisible design, where
\begin{equation}\label{eq:DD-act-lambda}
  \lambda_t:=\frac{\card G}{\card G_{B_0}}\,
             \frac{ \Mat2{k\\t}}{\Mat2{vs^{-1}\\ t} s^t}\, ,
\end{equation}
and where $G_{B_0}\subset G$ denotes the setwise stabiliser of $B_0$.
\end{thm}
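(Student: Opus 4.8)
The plan is to check the four axioms (A)--(D) of Definition~\ref{def:DD} for the triple $(X,\cB,\cS)$ with $\cB=B_0^G$. Axioms (A), (B) and (D) are quick, while the entire content of the theorem --- in particular the value of $\lambda_t$ --- sits in axiom (C), which I would obtain by counting incident pairs in two ways, much as in the proof of Theorem~\ref{thm:i-DD}.

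First the easy axioms. For (A) I would observe that each $g\in G$ is a bijection of $X$, so $\card B_0^g=\card B_0=k$; moreover every $B_0^g$ is $\rel$-transversal, for if $x\neq y$ lay in $B_0^g$ with $x\rel y$, then applying the two-sided invariance \eqref{eq:G-invar.scharf} with $g^{-1}$ would produce distinct $\rel$-equivalent points $x^{g^{-1}},y^{g^{-1}}$ of $B_0$, contradicting transversality of $B_0$. Axiom (B) is hypothesis~(b). For (D), recall $\card\cS=vs^{-1}$ by \eqref{eq:card-S}, so the assumption $t\le\card\cS$ is exactly $t\le v/s$.

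The heart is axiom (C). I would count the number $N$ of pairs $(Y,B)$ with $Y$ an $\rel$-transversal $t$-subset, $B\in\cB$, and $Y\subseteq B$. Counting over blocks: every $B\in\cB$ is a transversal $k$-set, and every $t$-subset of a transversal set is transversal, so each $B$ contains exactly $\Mat2{k\\t}$ such $Y$; since $\cB$ is the single $G$-orbit $B_0^G$, the orbit--stabiliser argument behind \eqref{eq:index.stabil}, applied to the action on subsets, gives $\card\cB=\card G/\card G_{B_0}$, whence $N=(\card G/\card G_{B_0})\Mat2{k\\t}$. Counting over transversal $t$-sets: I would show that the number $r(Y)$ of blocks through a fixed transversal $t$-set $Y$ is independent of $Y$. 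Here hypothesis~(c) enters: given transversal $t$-sets $Y,Y'$, choose $g\in G$ with $Y^g=Y'$; as $\cB$ is $G$-invariant (being a $G$-orbit), the map $B\mapsto B^g$ is a bijection from the blocks containing $Y$ onto those containing $Y'$, so $r(Y)=r(Y')=:\lambda_t$. Counting the transversal $t$-sets themselves --- choose $t$ of the $vs^{-1}$ classes, then one point in each --- gives $\Mat2{vs^{-1}\\t}s^t$ of them, so $N=\lambda_t\,\Mat2{vs^{-1}\\t}s^t$. Equating the two expressions for $N$ and solving for $\lambda_t$ yields \eqref{eq:DD-act-lambda}.

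The delicate point is not the arithmetic but confirming that $\lambda_t$ is a \emph{positive integer}, as the definition demands. Integrality is free once $r(Y)$ is shown constant, since each $r(Y)$ is a cardinality; positivity follows because $B_0$ is itself a transversal $k$-set with $k\ge t$, hence contains some transversal $t$-subset $Y_0$, giving $\lambda_t=r(Y_0)\ge 1$. The one genuinely load-bearing step is the constancy of $r(Y)$: it uses crucially that $\cB$ is the \emph{full} orbit of $B_0$ (so that $B\mapsto B^g$ maps $\cB$ into itself) together with the transitivity in (c); dropping either would break the argument.
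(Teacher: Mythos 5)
Your proposal is correct and follows essentially the same route as the paper: axioms (A), (B), (D) are checked directly, and axiom (C) is established by transporting the blocks through one transversal $t$-subset of $B_0$ to those through any other via the transitivity hypothesis (c). The only cosmetic difference is that you inline the double count of pairs $(Y,B)$ to derive the value of $\lambda_t$, whereas the paper obtains the same identity by combining the orbit--stabiliser count $b=\card G/\card G_{B_0}$ with formula \eqref{eq:blockanzahl}, which is itself proved by exactly that double count.
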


\begin{proof}
Firstly, let $\card X=:v$. Since $B_0$ is $\rel$-transversal, we have $0<t\leq
k=\card B_0\leq \card\cS=\frac vs$ so that axiom (D) in the definition of a DD
is satisfied. Also, we obtain $s,k>0$.

As $B_0$ is an $\rel$-transversal $k$-set, so is every element of $B_0^G$ by
(\ref{eq:G-invar.scharf}).  This verifies axiom (A), whereas axiom (B) is
trivially true due to assumption (b).

Next, to show axiom (C), we consider the base block $B_0$ and a $t$-subset
$Y\subset B_0$ which exists due to our assumption $t\leq k$. Let $\lambda_t>0$
be the number of blocks containing $Y$. Given an arbitrary $\cR$-transversal
$t$-subset $Y'\subset X$ there is a $g\in G$ with $Y'=Y^g$, since $Y\subset
B_0$ is $\cR$-transversal. This $g$ takes the $\lambda_t$ distinct blocks
through $Y$ to $\lambda_t$ distinct blocks through $Y'$. Similarly, the action
of $g^{-1}$ shows that there cannot be more than $\lambda_t$ blocks containing
$Y'$.

Altogether, we have verified the axioms of a divisible design. Yet, it remains
to calculate the parameter $\lambda_t$. By definition, the group $G$ acts
transitively on the set $\cB$ of blocks. By equation (\ref{eq:index.stabil}),
the number of blocks is
\begin{equation*}\label{eq:DD-act-b}
    b =  \frac{\card G}  {\card G_{B_0}}\,,
\end{equation*}
whence, by (\ref{eq:blockanzahl}), we get
\begin{equation*}
  \lambda_t = b\,\frac{\Mat2{k\\t}}{\Mat2{vs^{-1}\\ t} s^t}
  =\frac{\card G}{ \card G_{B_0}}\,\frac{\Mat2{k\\t}}{\Mat2{vs^{-1}\\ t} s^t}
\end{equation*}
which proves (\ref{eq:DD-act-lambda}).
\end{proof}
\begin{txt}
Note that in \cite{spera-92a} our condition (b) is missing. On the other hand
it is very easy to show that (b) cannot be dropped without effecting the
assertion of the theorem:
\end{txt}

\begin{exa}\label{bsp:gegen}
Let $X=\{1,2,3\}$, $\cS=\{\{1\},\{2,3\}\}$, and let $G$ be that subgroup of the
symmetric group $S_3$ which is formed by the identity $\id_X$ and the
transposition that interchanges $2$ with $3$. Then, apart from (b), all other
assumptions of Theorem \ref{thm:spera} are satisfied if we define $t:=2$ and
$B_0:=\{1,2\}$. However, no $2$-DD is obtained, since there are two blocks
containing $1$, but there exists only one block through the point $2$.
\end{exa}

\begin{nrtxt}
In the subsequent chapters we shall mainly apply a slightly modified version of
Theorem \ref{thm:spera} which is based on the following concept. A $t$-tuple
$(x_1,x_2,\ldots,x_t)\in X^t$ is called
\emph{$\cR$-transversal\/}\index{R-transversal t-tuple@$\rel$-transversal
$t$-tuple}\index{t-tuple@$t$-tuple!$\rel$-transversal} if its entries belong to
$t$ \emph{distinct\/} point classes.
\end{nrtxt}

\begin{cor}\label{cor:spera}
Theorem \emph{\ref{thm:spera}} remains true, mutatis mutandis, if assumption
\emph{(b)} is dropped and assumption \emph{(c)} is replaced with
\begin{itemize}
    \item[\emph{(c$_1$)}] The group $G$ acts transitively on the set of\/ $\rel$-transversal
    $t$-tuples of $X$ for some positive integer $t\leq\card\cS$.
\end{itemize}
\end{cor}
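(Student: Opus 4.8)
The plan is to reduce the statement to Theorem~\ref{thm:spera} itself, by showing that the strengthened hypothesis (c$_1$) simultaneously recovers the dropped assumption (b) and implies the original assumption (c). Once this has been established, nothing further need be checked: the construction $\cB=B_0^G$, the verification of axioms (A)--(D), and the value of $\lambda_t$ in (\ref{eq:DD-act-lambda}) are all inherited verbatim from Theorem~\ref{thm:spera}.

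First I would record the easy implication (c$_1$)$\Rightarrow$(c). Given two $\rel$-transversal $t$-subsets, fix an arbitrary ordering of each so as to obtain two $\rel$-transversal $t$-tuples; by (c$_1$) some $g\in G$ carries the first tuple onto the second, and in particular carries the first set onto the second. Hence $G$ acts transitively on the $\rel$-transversal $t$-subsets, which is precisely (c).

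The heart of the matter---and the step that replaces (b)---is the claim that, under (a) and (c$_1$), all equivalence classes have one common cardinality $s$; this is exactly the feature that fails in Example~\ref{bsp:gegen}, where (c) holds but (c$_1$) does not. I would argue as follows. By $G$-invariance in the sharp form (\ref{eq:G-invar.scharf}), every $g\in G$ satisfies $[x]^g=[x^g]$ as sets, so $g$ restricts to a bijection $[x]\to[x^g]$ and thus $\card[x]=\card[x^g]$. Now let $C\neq C'$ be two classes. If $t\geq 2$, then $\card\cS\geq t\geq 2$ allows me to select $t$ distinct classes $C=C_1,C'=C_2,C_3,\dots,C_t$ with representatives $x_i\in C_i$; the two $\rel$-transversal $t$-tuples $(x_1,x_2,x_3,\dots,x_t)$ and $(x_2,x_1,x_3,\dots,x_t)$ are related by some $g\in G$ via (c$_1$), so $x_1^g=x_2$ and hence $C^g=[x_1]^g=[x_1^g]=[x_2]=C'$, giving $\card C=\card C'$. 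If instead $t=1$, then (c$_1$) merely asserts that $G$ is transitive on $X$, and choosing $g$ with $x^g=x'$ for $x\in C$, $x'\in C'$ yields $C^g=C'$ in the same manner. Therefore (b) holds, with $s$ the common class size.

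With (a), (b) and (c) all in force and $B_0$ an $\rel$-transversal $k$-subset satisfying $t\leq k$, Theorem~\ref{thm:spera} applies directly and produces the $t$-$(s,k,\lambda_t)$-divisible design with $\lambda_t$ as in (\ref{eq:DD-act-lambda}). The only genuine obstacle is the equicardinality claim; everything else is bookkeeping. I expect the sole subtlety there to be the need to split off the degenerate case $t=1$, in which no two entries of a single tuple can be transposed, and which is dispatched by the direct transitivity-on-$X$ argument.
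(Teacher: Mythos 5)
Your proof is correct and follows essentially the same route as the paper: deduce (c) from (c$_1$) by ordering the $t$-subsets, and recover (b) by using the $G$-invariance identity $[x]^g=[x^g]$ together with (c$_1$) to show that $G$ permutes the point classes transitively. The only difference is cosmetic: the paper avoids your case split at $t=1$ by simply extending two arbitrary points $x_1,x_1'$ to $\rel$-transversal $t$-tuples and mapping one tuple to the other, which shows $G$ is transitive on $X$ (hence on $\cS$) uniformly for all $t\geq 1$, making the swap argument for $t\geq 2$ unnecessary.
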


\begin{proof}
We observe that each $\rel$-transversal $t$-subset $Y$ gives rise to $t!$
mutually distinct $\rel$-trans\-versal $t$-tuples with entries from $Y$. As
$0<t\leq \card\cS$, it is obvious from (c$_1$) that $G$ acts transitively on
the set of $\rel$-transversal $t$-subsets of $X$, i.~e., condition (c) from
Theorem \ref{thm:spera} is satisfied.

In order to show that all equivalence classes of $\rel$ are of the same size,
we prove that $G$ acts transitively on $\cS$. Since assumption (a) remained
unchanged, formula (\ref{eq:G-invar.scharf}) can be shown as before. This
implies that, for all $S\in\cS$ and all $g\in G$, the image $S^g$ is an
equivalence class; hence $G$ acts on $\cS$. For this action to be transitive it
suffices to establish that $G$ operates transitively on $X$. So let $x_1$ and
$x'_1$ be arbitrary elements of $X$. We infer from $0<t\leq\card\cS$ that there
exist $\rel$-transversal $t$-tuples $(x_1,x_2,\ldots,x_t)$ and
$(x'_1,x'_2,\ldots,x'_t)$. By (c$_1$), there is at least one $g\in G$ which
takes the first to the second $t$-tuple. Therefore $x_1^g=x'_1$.
\end{proof}

\begin{nrtxt}
Suppose that a divisible design $\cD$ is defined according to Theorem
\ref{thm:spera} or Corollary \ref{cor:spera}. Then the action of $G$ on $\cS$
is $t$-homogeneous or $t$-transitive, respectively. In both cases the group $G$
acts on $X$ as an automorphism group of $\cD$ which, by the definition of
$\cB$, operates transitively on the set of blocks.
\end{nrtxt}

\begin{nrtxt}
Clearly, Theorem \ref{thm:spera} remains valid if we replace assumption (b)
with the following:
\begin{enumerate}
    \item[(b$_1$)] \emph{$G$ acts transitively on $\cS$}.
\end{enumerate}

Another possibility to alter the conditions in Theorem \ref{thm:spera} is as
follows \cite[Remark~2.1]{schulz+s-98a}: Suppose that condition (b) is dropped
and that (c) is replaced with
\begin{enumerate}\label{eq:spera.b2}
     \item[(c$_2$)] \emph{The group $G$ acts transitively on the set of\/ $\rel$-transversal
    $t$-subsets of $X$ for some positive integer $t<\card\cS$.}.
\end{enumerate}
In this case, let $y_1,y_2,\ldots, y_w$, where $w=\card\cS$, be a system of
representatives for the equivalence classes of $\cR$ such that
$\card[y_1]\leq\card[y_2]\leq\cdots\leq\card [y_w]$. We claim that (c$_2$)
implies
\begin{equation*}\label{}
   \card[y_1]=\card[y_2]=\cdots =\card [y_w]
\end{equation*}
which in turn is equivalent to (b). By (c$_2$), we have $0<t<w$ so that
\begin{equation*}
Y:=\{y_1,y_2,\ldots, y_t\} \mbox{~and~} Y':=\{y_2,y_3,\ldots, y_t,y_w\}
\end{equation*}
are $\rel$-transversal $t$-subsets of $X$. By the action of $G$ on $\cS$, the
$t$-tuple
\begin{equation*}
   \big({\card[y_2]},\card[y_3],\ldots,\card [y_t],\card [y_w]\big)
\end{equation*}
arises from
\begin{equation*}\label{}
  \big({\card[y_1]},\card[y_2],\ldots,\card [y_t]\big)
\end{equation*}
by re-arranging its entries. Therefore we obtain $\card[y_1]=\card [y_w]$, as
required.

Finally, we may even just drop assumption (b) if the integer $t$ admits the
application of Theorem \ref{thm:liv.wagner} which in turn will ensure that $G$
acts transitively on $\cS$.
\end{nrtxt}

\section{Divisible designs and constant weight codes}

\begin{nrtxt}
There is a close relationship between DDs and certain codes which will be
sketched in this section.

First, we collect some basic notions from coding theory. See, among others, the
book \cite{hill-86} for an introduction to this subject. Let us
write\footnote{In Chapter \ref{chap:P(R)} we shall use this symbol to denote
the ring of integers modulo $m$.}
\begin{equation*}\label{}
  \bZ_m:=\{0,1,\ldots,m\}\subset\bZ, \mbox{ where } m\geq 1.
\end{equation*}
Also let $n$ be a positive integer. The \emph{Hamming distance\/}\index{Hamming
distance} of $\vx=(x_1,x_2,\ldots,x_n)$ and
$\vy=(y_1,y_2,\ldots,y_n)\in\bZ_m^n$ is defined as the number of indices
$i\in\{1,2,\ldots,n\}$ such that $x_i\neq y_i$. It turns $\bZ_m^n$ into a
metric space. The \emph{Hamming weight\/}\index{Hamming weight} of an element
$\vx\in\bZ_m^n$ is its Hamming distance from $(0,0,\ldots,0)$ or, said
differently, the number of its non-zero entries. This terminology is in honour
of \textsc{Richard Wesley Hamming} (1915--1998), whose fundamental paper on
error-detecting and error-correcting codes appeared in 1950.

For our purposes it will be adequate to define an
\emph{automorphism\/}\index{automorphism!of $\bZ_m^n$} of $\bZ_m^n$ as a
product of any two mappings of the following form: First we apply a bijection
\begin{equation*}\label{}
   \bZ_m^n \to \bZ_m^n :
   (x_1,x_2,\ldots,x_n)\mapsto
   (x_1^{\alpha_1},x_2^{\alpha_2},\ldots,x_n^{\alpha_n}),
\end{equation*}
where each $\alpha_i$ is a permutation of $\bZ_m$, and then a bijection
\begin{equation*}\label{}
   \bZ_m^n \to \bZ_m^n :
   (x_1,x_2,\ldots,x_n)\mapsto
   (x_{1^\alpha},x_{2^\alpha},\ldots,x_{n^\alpha}),
\end{equation*}
where $\alpha$ is a permutation of $\{1,2,\ldots,n\}$. All such automorphisms
form a group under composition of mappings. Every automorphism preserves the
Hamming distance. The Hamming weight is preserved if, and only if,
$(0,0,\ldots,0)$ remains fixed.

An \emph{$m$-ary code\/}\index{code!$m$-ary}\index{m-ary code@$m$-ary code} of
\emph{length $n$}\index{length} is just a given subset $\vC\subset \bZ_m^n$.
Its elements are called \emph{codewords}\index{codeword}. The set $\bZ_m$ is
called the underlying \emph{alphabet}\index{alphabet} of the code $\vC$. A code
is called a \emph{constant weight code}\index{constant weight code} if all
codewords have the same (constant) Hamming weight.

Let $\vC_1,\vC_2\subset\bZ_m^n$ be codes. An
\emph{isomorphism\/}\index{isomorphism!of codes} is an automorphism of
$\bZ_m^n$ taking $\vC_1$ to $\vC_2$. An
\emph{automorphism\/}\index{automorphism!of a code}\index{code!automorphism of}
of a code is defined similarly.
\end{nrtxt}

\begin{nrtxt}
We now present the essential construction: Suppose that $\cD=(X,\cB,\cS)$ is a
\tsk-DD with $n:=\frac{v}{s}$ point classes. Also let $m:=s+1$. We augment $n$
\emph{ideal points}\index{ideal point}\index{point!ideal} to $X$, thus
obtaining a set $\widetilde X$ with
\begin{equation*}
    \card\widetilde X = v+n = mn.
\end{equation*}
To each point class we add precisely one ideal point in such a way that
distinct point classes are extended by distinct ideal points. Given a point
class $S\in\cS$ we write $\widetilde S$ for the corresponding \emph{extended
point class}\index{extended point class}\index{point class!extended}. Any block
$B\in\cB$ has $k\leq s$ points. We turn it into an \emph{extended
block}\index{extended block}\index{block!extended}, say $\widetilde B$, by
adding to $B$ the $n-k$ ideal points of those extended point classes
$\widetilde S$ which have empty intersection with $B$. Hence $\widetilde B$
meets every extended point class at precisely one point.

By the above, there exists a bijection
\begin{equation*}\label{}
   \psi: \widetilde X\to \{1,2,\ldots,n\}\times\bZ_m
\end{equation*}
such that for each point class $S\in\cS$ there is an index
$i\in\{1,2,\ldots,n\}$ with
\begin{equation*}\label{}
  S^\psi=\{i\}\times(\bZ_m\setminus\{0\})\mbox{ and }
  \widetilde S^\psi=\{i\}\times\bZ_m.
\end{equation*}
This means that under $\psi$ the set $\widetilde X\setminus X$ of ideal points
goes over to $\big\{(i,0)\mid i\in\{1,2,\ldots,n\}\big\}$. Furthermore, two
points of $\widetilde X$ are in the same extended point class if, and only if,
the first entries of their $\psi$-images coincide.

We are now in a position to define the \emph{code of\/}\index{code of a
DD}\index{DD!code of a} $\cD$ (with respect to $\psi$) as the subset of
$\bZ_m^n$ given by
\begin{equation*}\label{}
    \vC(\cD):=\{(j_1,j_2,\ldots,j_n)\mid \exists\, B\in\cB
                  : \widetilde B^\psi=\{(1,j_1),(2,j_2),\ldots,(n,j_n)\} \}.
\end{equation*}
According to our construction, all codewords have weight $k$, whence $\vC(\cD)$
is in fact a constant weight code.

In general, $\psi$ can be chosen in different ways. However, this will yield
isomorphic codes. So the actual choice of $\psi$ turns out to be immaterial. In
\cite{schulz+s-00} the codes arising in this way are characterised. Also, it is
shown that the entire construction can be reversed, i.~e., one can go back from
certain codes to divisible designs.
\end{nrtxt}

\begin{nrtxt}
A neat connection exists between the automorphism group of a DD and the
automorphism group of its constant weight code. Up to the exceptional case when
$t=2$ and $v=2k$, the two groups are isomorphic
\cite[Theorem~3.1]{schulz+s-00}. Also, if the automorphism group of $\cD$ is
``large'' then its corresponding code is well understood. See
\cite{etzion-97a}, \cite{schulz-98a}, and \cite{schulz+s-00} for a detailed
discussion.
\end{nrtxt}

\section{Notes and further references}

\begin{nrtxt}\label{:DD.lit1}
There is a widespread literature on divisible designs, and some particular
classes of DDs have been thoroughly investigated and characterised.

Among them are \emph{translation divisible designs}\index{translation DD},
i.~e.\ $2$-DDs with a group $T$ of automorphisms which acts sharply transitive
on $X$ (see \ref{:aktion.eigensch}) such that the following holds: For all
blocks $B\in\cB$ and all $g\in T$ there is either $B^g=B$ or $B^g\cap
B=\emptyset$. The name of these structures is due to the fact the same
properties hold, mutatis mutandis, for the action of the group of translations
on the set of points and lines of the Euclidean plane. We refer to
 \cite{bili+m-85a}, \cite{herz+s-89a},  \cite{jung-81a},
 \cite{schulz-84a}, \cite{schulz-85b}, \cite{schulz-85a},
 \cite{schulz-87a}, \cite{schulz-87b}, \cite{schulz-88a}, \cite{spera-90a},
 \cite{spera-91a},
and the references given there.

The more general class of ``$(s,k,\lambda_1,\lambda_2)$-translation DDs'' is
considered in \cite{schulz+s-92a} and \cite{spera-92b}.

Another construction of these more general DDs uses a \emph{Singer
group}\index{Singer group} with a \emph{relative difference
set}\index{difference set! relative}\index{relative difference set}
\cite{jung-82a}. As a general theme, each of the preceding constructions is
based upon a group which acts as a group of automorphisms of the DD.
\end{nrtxt}

\begin{nrtxt}\label{:DD.lit2}
While Theorem \ref{thm:spera} and Corollary \ref{cor:spera} pave the way to
constructing DDs, the actual choice of $X$, $\rel$, $G$, and a base block $B_0$
is a subtler question. We collect here some results:

In \cite{spera-92a} the following case is considered: $X$ is the projective
line over a finite local $K$-algebra $R$, and $G$ is the general linear group
$\GL_2(R)$ in two variables over $R$. All this is part of our exposition in
Chapters \ref{chap:P(R)} and \ref{chap:DD.GL}. In this way one obtains
$3$-divisible designs.

A higher-dimensional analogue, based upon the projective space over a finite
local algebra can be found in \cite{spera-95}; here, in general, only $2$-DDs
are obtained.

Another approach uses as the set $X$ the set of (affine) lines of a finite
translation plane, $\rel$ is chosen to be the usual parallelism of lines, and
$G$ is a group of affine collineations which acts $2$-transitively on the line
at infinity and contains all translations. Apart from the finite Desarguesian
planes this leads to L\"{u}neburg planes and Suzuki groups; see
\cite{schulz+s-98b} and \cite{spera-00a}. A more general setting, where $G$
acts $2$-transitively on a subset of the line at infinity can be found in the
papers \cite{cerr-02a}, \cite{cerr+sp-99}, and \cite{schulz+s-98a}.

A class of DDs, where $G$ is an orthogonal group or a unitary group, is
determined in \cite{cerr+sch-01a}. It was pointed out in \cite{giese+h+s-05a}
that one particular case of this construction is---up to isomorphism---a
Laguerre geometry (see \ref{:laguerre.geom}) which, by a completely different
approach, appears already in \cite{spera-92a}.

In \cite{cerr+sch-00a} the group $G$ is chosen to be the classical group
$\GL_3(q)$ (the general linear group in $3$ variables over the field with $q$
elements) in order to obtain divisible designs.

Also, we refer to \cite{spera-96a} for a discussion of transitive extensions of
imprimitive groups. A generalisation of Spera's construction was exhibited in
\cite{giese-05a} and \cite{giese+sch-07a}. It was put into a more general
context in \cite{blunck+h+z-07a} as follows: Let a group $G$ acting on some set
$X$ and a \emph{starter $t$-DD\/}\index{starter DD} in $X$ be given. Then,
under certain technical conditions, a new $t$-DD can be obtained via the action
of $G$ on $X$.

\end{nrtxt}


\chapter{Laguerre Geometry}\label{chap:P(R)}

\section{Real Laguerre geometry}

\begin{nrtxt}
The classical \emph{Laguerre geometry\/}\index{Laguerre geometry} is the
geometry of spears and cycles in the Euclidean plane. A
\emph{spear\/}\index{spear} is an oriented line and a
\emph{cycle\/}\index{cycle} is either an oriented circle or a point (a ``circle
with radius zero''). There is a \emph{tangency relation\/}\index{tangency
relation} between spears and cycles; see the first two images in Figure
\ref{abb:zykel}. Furthermore, there exists a
\emph{parallelism\/}\index{parallelism}\index{parallel
spears}\index{spears!parallel} (written as $\parallel$) on the set of spears
which is depicted in the third image. We shall not give formal definitions of
these relations.

For our purposes it is more appropriate to identify a cycle with the set of all
its tangent spears. Then it is intuitively obvious that any cycle contains
precisely one spear from every parallel class, i.~e., it is a
``$\parallel$-transversal set''. Also, given any three non-parallel spears
there is a unique cycle containing them. All this reminds us of a divisible
design, even though the set of spears is infinite.

This geometry is named after the French mathematician \textsc{Edmond Nicolas
Laguerre} (1834--1886) who used it to solve a famous problem due to
\textsc{Apollonius of Perga} (262?--190?~BC): Find all circles that touch three
given circles (without orientation). See, for example, \cite{pedoe-72} and
\cite{pedoe-75}.
\begin{figure}[h]\begin{center}\unitlength1cm
\begin{picture}(11,3.76)
 \put(0,0){\includegraphics[width=11cm]{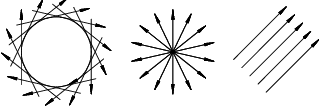}}
\end{picture}
\caption{\label{abb:zykel}Two cycles with tangent spears, and a family of
parallel spears}
\end{center}\end{figure}
\end{nrtxt}

\begin{nrtxt}
It was in the year of 1910 that \textsc{Wilhelm Blaschke} (1885--1962) showed
that the set of spears is in one-one correspondence with the points of a
circular cylinder of the Euclidean $3$-space \cite{blasch-10}, now called the
\emph{Blaschke cylinder\/}\index{Blaschke cylinder}\footnote{From the point of
view of projective geometry this is a quadratic cone without its vertex, whence
it is also called the \emph{Blaschke cone}\index{Blaschke cone}.}. Under this
mapping the cycles correspond to the ellipses on the cylinder and two spears
are parallel if, and only if, their images are on a common generator of the
cylinder. Blaschke also showed that the real Laguerre geometry can be
represented in terms of \emph{dual numbers\/}\index{dual
numbers}\index{numbers!dual} $x+y\eps$, where $x,y\in\bR$, $\eps\notin\bR$, and
$\eps^2=0$; see Example \ref{bsp:lokale.ringe}
(\ref{bsp:lokale.ringe.reelle.dualzahlen}) for a concise definition.
\end{nrtxt}

\begin{nrtxt}
There is a wealth of literature on the classical Laguerre geometry. We refer to
\cite[Chapter~1,\S~2]{benz-73}, \cite[Chapter~4]{benz-92},
\cite[Chapter~15~A]{giering-82}, \cite{rigby-81}, \cite{yaglom-79},
\cite{yaglom-81}, as well as the survey articles \cite{herz-95} and
\cite{schroeder-95}. Note that in \cite{yaglom-79} the term \emph{inversive
Galileian plane}\index{inversive Galileian plane}\index{Galileian
plane!inversive}---named after \textsc{Galileo Galilei} (1564--1642)---is used
instead.
\end{nrtxt}

\begin{nrtxt}
Our construction of divisible designs in Chapter \ref{chap:DD.GL} can be seen
as a generalisation of the classical Laguerre geometry, where a finite local
ring takes over the role of the ring of dual numbers over the reals. See
\cite{benz-07a} for a different generalisation of Laguerre geometry.
\end{nrtxt}

\section{The affine and the projective line over a ring}

\begin{txt}
\emph{All our rings are associative with a unit element \noslant{(}usually
denoted by $1$\noslant{)}, which is inherited by subrings and acts unitally on
modules. The trivial case $1=0$ is excluded.}
\end{txt}

\begin{nrtxt}
Let $R$ be a ring. Given an element $s\in R$ there are various possibilities:

If there is an $l\in R$ with $ls=1$ then $s$ is called \emph{left invertible}
\index{left invertible}\index{element!left invertible}\index{left invertible
element}. Such an element $l$ is said to be a \emph{left inverse}\index{left
inverse}\index{inverse!left} of $s$. \emph{Right invertible}
elements\index{right invertible}\index{element!right invertible}\index{right
invertible element} and \emph{right inverses} \index{right
inverse}\index{inverse!right} are defined analogously.

If $s$ has both a left inverse $l$ and a right inverse $r$ then
\begin{equation}\label{eq:li-re-inv}
  l=l1=l(sr)=(ls)r=1r=r.
\end{equation}
In this case, the element $s$ is said to be
\emph{invertible}\index{element!invertible}\index{invertible element}.
Moreover, by the above, all left (right) inverses of $s$ are equal to $r$ ($l$)
so that it is unambiguous to call $l=r=:s^{-1}$ the
\emph{inverse}\index{inverse} of $s$. The (multiplicative) group of invertible
elements (\emph{units}\index{unit}) of a ring $R$ will be denoted by $R^*$.
Clearly, $0$ is neither left nor right invertible.

If $s\neq 0$ then $s$ is called a \emph{left zero divisor}\index{zero
divisor!left}\index{left zero divisor} if there exists a non-zero element $r\in
R$ such that $sr=0$. Such an $s$ has no left inverse, since $ls=1$ would imply
$r=(ls)r=l(sr)=0$. However, an element without a left inverse is in general not
a left zero divisor. \emph{Right zero divisors}\index{zero
divisor!right}\index{right zero divisor} are defined similarly.

Of course the distinction between ``left'' and ``right'' is superfluous if $R$
is a commutative ring.
\end{nrtxt}

\begin{nrtxt}\label{:dedekind}
Suppose that we are given elements $a,b\in R$ with $ab=1$. Hence $y=y1=(ya)b$
for all $y\in R$. This implies that the \emph{right translation\/}\index{right
translation}\index{translation!right} $\rho_b:R\to R:x\mapsto xb$ is
surjective. Moreover, $(ba-1)b=b1-b=0$. Thus, whenever we are able to show that
$\rho_b$ is injective we obtain $ba-1=0$, i.~e., $ba=1$. This conclusion can be
applied, for example, if $R$ is a finite ring or a subring of the endomorphism
ring of a finite-dimensional vector space.

Rings with the property that, for all $a,b\in R$, $ab=1$ implies $ba=1$ are
called \emph{Dedekind-finite}\index{Dedekind-finite
ring}\index{ring!Dedekind-finite} (see e.g. \cite{lam-91}). In fact, in most of
our examples this condition will be satisfied. It carries the name of
\textsc{Richard Dedekind} (1831--1916).

\begin{exer}
Show that the endomorphism ring of an infinite dimensional vector space is not
Dedekind-finite.
\end{exer}

\end{nrtxt}

\begin{nrtxt}
Let $R$ be a ring. Then it is fairly obvious how to define the \emph{affine
line\/}\index{line!affine, over a ring}\index{affine line over a ring} over
$R$. It is simply the set $R$, but---as in real or complex analysis---we adopt
a geometric point of view by using the term \emph{point}\index{point} for the
elements of $R$. We shall meet again this affine line as a subset of the
projective line over $R$. However, to define something like a ``projective
line'' over a ring $R$ is a subtle task. As a matter of fact, various
definitions have been used in the literature during the last decades. Some of
those definitions are equivalent, some are equivalent only for certain classes
of rings. A short survey on this topic is included in \cite{lash-97}.
\end{nrtxt}

\begin{nrtxt}
Of course, a definition of a projective line over a ring has to include, as a
particular case, the projective line over a \emph{field}\index{field} $F$.
Observe that we use the term ``field'' for what other authors call a \emph{skew
field\/}\index{skew field}\index{field!skew} or a \emph{division
ring}\index{division ring}. Thus multiplication in a field need not be
commutative.

A particular case is well known from complex analysis: The \emph{complex
projective line}\index{projective line!complex}\index{complex projective
line}\index{line!complex projective} can be introduced as $\bC\cup\{\infty\}$,
where $\infty$ is an arbitrary new element. Intuitively, we think of $\infty$
as being $\frac a0$, where $a\in\bC$ is non-zero. For all $a\in \bC$, we have
$\frac a1 = \frac{xa}x$, $x\neq 0$. Thus \emph{every\/} fraction $\frac ab$
other than ``$\frac 00$'' determines an element of $\bC\cup\{\infty\}$.

It is immediate to carry this over to an arbitrary field $F$. However, one has
to be careful when using fractions in case that $F$ is non-commutative, since
$\frac ab$ could mean $ab^{-1}$ or $b^{-1}a$. We avoid ambiguity by
representing the elements of the \emph{projective line over an arbitrary
field\/}\index{line!projective, over a field}\index{projective line!over a
field} $F$ via
\begin{equation}\label{eq:P-F}
\begin{array}{rcl}
    a &\leftrightarrow& F(a,1)=\{x(a,1)\mid x\in F\} \mbox{ for all } a\in F,\\
    \infty&\leftrightarrow& F(1,0).
\end{array}
\end{equation}
More formally, the projective line over $F$ appears as the set of
one-dimensional subspaces of the \emph{left\/} vector space $F^2$. Every
non-zero vector in $F^2$ is a representative of a point. In terms of the
projective line the zero vector $(0,0)\in F^2$ has no meaning. Of course, we
could also consider $F^2$ as a \emph{right\/} vector space in order to describe
this projective line. The choice of ``left'' or ``right'' is just a matter of
taste.
\end{nrtxt}

\begin{nrtxt}
Now let us turn to an arbitrary ring $R$. We consider a (unitary) left module
$\vM$ over $R$. A family $(\vb_1,\vb_2,\ldots,\vb_n)$ of
\emph{vectors\/}\index{vector} in $\vM$ is called a \emph{basis\/}\index{basis}
provided that the mapping
\begin{equation}\label{eq:M.basis}
    R^n\to \vM : (x_1,x_2,\ldots,x_n)\mapsto\sum_{i=1}^n x_i\vb_i
\end{equation}
is a bijection. In this case $\vM$ is called \emph{free of rank}
$n$\index{module!free of rank $n$}\index{free of rank $n$}. It is important to
notice that this rank $n$ is in general \emph{not\/} uniquely determined by
$\vM$. See, for example, \cite[Example~1.4]{lam-99}

In order to define the projective line over a ring $R$ we start with a module
$\vM$ over $R$ which is free of rank $2$. By virtue of the bijection given in
(\ref{eq:M.basis}), we replace $\vM$ with $R^2$. Of course, the left $R$-module
$R^2$ is free of rank $2$; this is immediate by considering the \emph{standard
basis\/}\index{basis!standard}\index{standard basis} $\big((1,0),(0,1)\big)$ of
$R^2$.

It is tempting to define the projective line over a ring just in same way as we
did for a field in (\ref{eq:P-F}). However, this would not give ``enough
points'', since we would not get any ``point'' of the form $R(1,s)$, where
$s\neq 0$ has no left inverse. Nevertheless, $R(s,1)$ would be a point, i.~e.,
we would not have symmetry with respect to the order of coordinates. At the
other extreme one could say, as in the case of a field, that \emph{every\/}
pair $(a,b)\in R^2$, $(a,b)\neq (0,0)$ should be a representative of some
point. This point of view is adopted, for example, in
\cite[p.~1128]{bre+g+s-95}, where a distinction between ``points'' and ``free
points'' is made, and in \cite{faure-04a}. Yet, also here a problem arises: By
following this approach we would get, in general, ``far too much points'' for
our purposes.

It turned out that a ``good'' definition of the projective line over a ring $R$
is as follows: A submodule $R(a,b)\subset R^2$ is a point if $(a,b)$ is an
element of a basis with two elements. As in the case of a vector space, the
\emph{general linear group\/}\index{group!general linear}\index{general linear
group} $\GL_2(R)$ of invertible $2\times 2$-matrices with entries in $R$ acts
regularly on the set of those ordered bases of $R^2$ which consist of two
vectors. Therefore, starting at the canonical basis we are lead to the
following strict definition:
\end{nrtxt}

\begin{defi}\label{def:P(R)}
The \emph{projective line over $R$}\index{projective line!over a
ring}\index{line!projective, over a ring} is the orbit
\begin{equation*}
   \bP(R):=\big(R(1,0)\big)^{\GL_2(R)}
\end{equation*}
of $R(1,0)$ under the natural action of $\GL_2(R)$ on the subsets of $R^2$. Its
elements are called \emph{points}\index{point}.
\end{defi}
\begin{txt}
We refer to \cite[1.3]{blunck+he-05} and \cite[Definition~1.2.1]{herz-95} for
an equivalent definition which avoids using coordinates. Cf.\ also
\cite{blunck+h-01b} for the \emph{dual} of a \emph{projective
line}\index{projective line!dual of a}\index{dual of a projective
line}\index{line!dual of a projective}.
\end{txt}

\begin{nrtxt}
Let us describe $\bP(R)$ in different words: A pair $(a,b)\in R^2$ is called
\emph{admissible}\index{pair!admissible}\index{admissible pair} (over $R$) if
there exist $c,d\in R$ such that $\SMat2{a&b\\c&d}\in \GL_2(R)$. So we have
\begin{equation}
  \bP(R)=\{R(a,b)\subset R^2\mid (a,b)\mbox{ admissible}\}.
\end{equation}
Thus our definition of the projective line relies on admissible pairs. However,
there may also be non-admissible pairs $(a,b)\in R^2$ such that
$R(a,b)\in\bP(R)$. Strictly speaking, this phenomenon occurs precisely when $R$
is not Dedekind-finite (see \ref{:dedekind}). We refer to \cite{blunck+h-00b},
Propositions 2.1 and 2.2, for further details. We therefore adopt the following
convention:
\begin{center}
   \emph{Points of\/ $\bP(R)$ are represented by admissible pairs only}.
\end{center}
This brings us in a natural way to the next result:
\end{nrtxt}

\begin{thm}\label{thm:LRinvert}
Let $(a,b)\in R^2$ and $(a',b')$ be admissible pairs. Then $R(a,b)=R(a',b')$
if, and only if, there exists an element $u\in R^*$ with $(a',b')=u(a,b)$.
\end{thm}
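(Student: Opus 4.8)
The plan is to establish the two implications separately; the forward (``if'') implication is a one-line computation while the converse carries the real content, with admissibility doing the decisive work. For the ``if'' direction, assuming $(a',b')=u(a,b)$ with $u\in R^*$, I note that $Ru=R$ (every $r\in R$ equals $(ru^{-1})u\in Ru$), so that $R(a',b')=\{(xu)(a,b)\mid x\in R\}=R(a,b)$, using $Ru=R$. Admissibility of the pairs is not even needed here.

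For the converse I would start from $R(a,b)=R(a',b')$ and extract scalars from this equality of cyclic left submodules. Since $(a',b')\in R(a',b')=R(a,b)$, there is a $u\in R$ with $(a',b')=u(a,b)$, and symmetrically there is a $u'\in R$ with $(a,b)=u'(a',b')$. Substituting each relation into the other gives $(u'u-1)(a,b)=(0,0)$ and $(uu'-1)(a',b')=(0,0)$, and the task is to upgrade these annihilation relations to the identities $u'u=1$ and $uu'=1$.

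This upgrade is the crux, and it is precisely where admissibility is used. Because $(a,b)$ is admissible, I can fix $c,d\in R$ with $M:=\Mat2{a&b\\c&d}\in\GL_2(R)$. Writing $z:=u'u-1$, the row-vector identity $(z,0)M=(za,zb)=z(a,b)=(0,0)$ combined with the invertibility of $M$ forces $(z,0)=(z,0)MM^{-1}=(0,0)$, hence $u'u=1$. Running the identical argument with a completion of the admissible pair $(a',b')$ yields $uu'=1$. Thus $u$ has both a left and a right inverse (both equal to $u'$), so $u\in R^*$ by the computation in (\ref{eq:li-re-inv}), and $(a',b')=u(a,b)$ is the required relation.

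The one point I expect to require care is the final inference: one must resist deducing $uu'=1$ from $u'u=1$ for free, since that is exactly the Dedekind-finiteness condition (see \ref{:dedekind}), which is deliberately not assumed. The convention of representing points by admissible pairs only is designed precisely to accommodate rings that fail to be Dedekind-finite, so both admissibility hypotheses must genuinely be invoked, one supplying each of the two one-sided inverses of $u$.
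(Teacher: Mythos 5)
Your proof is correct, and the converse direction takes a genuinely different (and arguably cleaner) route than the paper's. The paper first normalises: it completes $(a,b)$ to a matrix $\gamma\in\GL_2(R)$ and applies $\gamma^{-1}$ so that the claim reduces to comparing $(1,0)$ with a pair $(u,v)$; the module equality $R(1,0)=R(u,v)$ then forces $v=0$ and yields a \emph{left} inverse of $u$, while the inherited admissibility of $(u,0)$ is used to read off a \emph{right} inverse of $u$ from the north-west entry of $\delta^{-1}$ for a completion $\delta$ of $(u,0)$. You instead stay with the original pairs, extract $u$ and $u'$ directly from the mutual containments, and convert the two annihilation relations $(u'u-1)(a,b)=(0,0)$ and $(uu'-1)(a',b')=(0,0)$ into $u'u=1$ and $uu'=1$ by right-cancelling an invertible completion of each pair. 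Both arguments use both admissibility hypotheses and both correctly avoid invoking Dedekind-finiteness; your version is more symmetric, makes it transparent which hypothesis supplies which one-sided inverse, and moreover exhibits the two-sided inverse of $u$ explicitly as the single element $u'$, whereas the paper obtains a left inverse $y$ and a right inverse $z$ separately and appeals to the computation in (\ref{eq:li-re-inv}) to identify them. The forward direction is identical in both.
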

\begin{proof}
Let $R(a,b)=R(a',b')$. By our assumption, there is a matrix $\gamma\in\GL_2(R)$
with first row $(a,b)$. Thus
\begin{equation*}
  (a,b)\cdot{\gamma^{-1}}=(1,0),\; (a',b')\cdot{\gamma^{-1}}=:(u,v),
  \mbox{ \ and \ }R(1,0)=R(u,v).
\end{equation*}
As $(a',b')$ is admissible, so is $(u,v)$. Now $(u,v)\in R(1,0)$ implies
$x(1,0)=(u,v)$ for some $x\in R$, whence $v=0$. Similarly, we obtain
$y(u,v)=(yu,0)=(1,0)$ for some $y\in R$. This means that $y$ is a left inverse
of $u$. By the above, $(u,v)=(u,0)$ is admissible. Hence there exists an
invertible matrix $\delta$, say, with first row $(u,0)$. Then
\begin{equation*}
        \Mat2{1 & 0\\0 &1 }=
        \underbrace{\Mat2{ u & 0 \\ {*} & {*}}}_{\delta}
        \cdot
        \underbrace{\Mat2{ z & {*} \\ {*} & {*}}}_{\delta^{-1}}
        =\Mat2{uz & {*}\\{*} &{*}}
\end{equation*}
shows that $z$, i.~e.\ the north-west entry of $\delta^{-1}$, is a right
inverse of $u$. Therefore
\begin{equation*}
 (a',b') = u\big((1,0)\cdot\gamma\big) = u(a,b)\mbox{ with } u\in R^*,
\end{equation*}
as required.

Conversely, if $u$ is a unit with $(a',b')=u(a,b)$ then $R=Ru$, whence
$R(a,b)=R(ua,ub)=R(a',b')$.
\end{proof}

\begin{nrtxt}\label{:viele.punkte}
We note that, for all $x\in R$,
\begin{equation}\label{eq:inv-matrizen}
         \Mat2{ x & 1 \\ {1} & {0}}
        =\Mat2{ 0 & 1 \\ {1} & {-x}}^{-1}\in\GL_2(R),\;
         \Mat2{ 1 & x \\ {0} & {1}}
        =\Mat2{ 1 & -x \\ {0} & {1}}^{-1}\in\GL_2(R).
\end{equation}
Hence the projective line over $R$ contains all points $R(x,1)$ with $x\in R$.
If $x,y\in R$ are different then $R(x,1)\neq R(y,1)$. Analogous results hold
for $R(1,x)\in\bP(R)$ for all $x\in R$. However, if $x\in R^*$ then
$R(1,x)=R(x^{-1},1)$, i.~e., this point is taken into account for a second
time. This shows that we can restrict ourselves to points $R(1,x)$ with $x\in
R\setminus R^*$, and it establishes the estimate
\begin{equation}\label{eq:mind.P(R)}
    \card\bP(R)\geq \card R + \card(R\setminus R^*).
\end{equation}
We shall see below that for certain rings the projective line contains even
more points. Cf.\ however Theorem \ref{thm:proj.gerade.lokal} and Corollary
\ref{cor:lokal.anzahl}.
\end{nrtxt}

\begin{exa}\label{bsp:Z6}
Let $\bZ/(6\bZ)=:\bZ_6$ be the (commutative) ring of integers modulo $6$. We
have $\bZ_6^*=\{1,5\}$, where $5\equiv-1\pmod 6$; the ideals of $\bZ_6$ are
$\{0\}$, $2\bZ_6=4\bZ_6$, $3\bZ_6$, and $\bZ_6$. Cf. \cite[2.6]{jac-85} for
further details.

As $x$ varies in $\bZ_6$, we obtain from the first matrix in
(\ref{eq:inv-matrizen}) six points
\begin{equation*}
  \bZ_6(0,1),\; \bZ_6(1,1),\; \ldots, \bZ_6(5,1),
\end{equation*}
and, for $x\in\bZ_6\setminus\bZ_6^*$ from the second part of
(\ref{eq:inv-matrizen}) four more points
\begin{equation*}
  \bZ_6(1,0),\; \bZ_6(1,2),\bZ_6(1,3),\;\bZ_6(1,4).
\end{equation*}
In this way we reach all points $\bZ_6(a,b)$ where $a$ or $b$ is a unit.
Therefore it remains to find out if there exist elements
$a,b\in\bZ_6\setminus\bZ_6^*$ and $c,d\in\bZ_6$ such that
\begin{equation*}
        \Mat2{ a & b \\ {c} & {d}}\in\GL_2(\bZ_6)
\end{equation*}
which in turn is equivalent to
\begin{equation*}
        \det\Mat2{ a & b \\ {c} & {d}}=ad-bc\in\bZ_6^*.
\end{equation*}
This means that the ideal generated by $a$ and $b$ has to be the entire ring
$\bZ_6$. Consequently,
\begin{equation*}
  (a,b)\in\{(2,3),(4,3),(3,2),(3,4)\}.
\end{equation*}
Thus the only remaining points in the projective line over $\bZ_6$ are
\begin{equation*}
  \bZ_6(2,3),\;\bZ_6(3,2).
\end{equation*}
Therefore $\card\bP(\bZ_6)=12$. Altogether, we see that among the $36$ elements
of $\bZ_6^2$ there are $24$ admissible and $12$ non-admissible pairs.
\end{exa}

\begin{nrtxt}\label{:unimod}
A pair $(a,b)\in R^2$ is called
\emph{unimodular\/}\index{pair!unimodular}\index{unimodular pair} (over $R$) if
there exist $x,y\in R$ with
\begin{equation*}
        ax+by=1.
\end{equation*}
 This is equivalent to saying that
the right ideal generated by $a$ and $b$ is the entire ring $R$.

Let $(a,b)$ be the first row of a matrix $\gamma\in\GL_2(R)$ and suppose that
the first column of $\gamma^{-1}$ reads $(x,y)^{\T}$. We read off from
$\gamma\gamma^{-1}=1$, where $1$ denotes the identity matrix in $\GL_2(R)$,
that every admissible pair is unimodular. We remark that
\begin{equation}\label{eq:uni-admiss}
  (a,b)\in R^2 \mbox{ unimodular over }R
  \Rightarrow (a,b)\mbox{ admissible over }R
\end{equation}
is satisfied, in particular, for all \emph{commutative\/} rings, since
$ax+by=1$ can be interpreted as the determinant of an invertible matrix with
first row $(a,b)$ and second row $(-y,x)$. \textsc{Walter Benz} in his famous
book \cite{benz-73} considers only commutative rings and defines the projective
line using unimodular pairs.

In fact (\ref{eq:uni-admiss}) holds also for certain non-commutative rings
\cite[Proposition~1.4.2]{herz-95}, namely for rings of \emph{stable
rank\/}\index{stable rank of a ring}\index{ring!stable rank of a} $2$, but we
shall not give a definition of this concept here. It was the late Dutch
geometer \textsc{Ferdinand~D.~Veldkamp} (1931--1999) who first pointed out the
significance for geometry of the stable rank of a ring. We refer to
\cite[\S~2]{veld-85} and \cite{veld-95} for excellent surveys on this topic.
Let us remark, however, that all finite rings are of stable rank $2$.

An example of a ring $R$, where (\ref{eq:uni-admiss}) is not true, can be found
in \cite[Remark~5.1]{blunck+h-01a}.
\end{nrtxt}

\begin{nrtxt}
As the concept of an admissible pair depends on the invertibility of square
matrices over a ring $R$, one may ask for a criterion which allows to decide
whether or not such a square matrix is invertible. In the general case,
something like this does not seem to exist. Nevertheless, there are particular
cases where we can not only decide invertibility but also explicitly describe
the inverse, as we already did in \ref{:viele.punkte}. Some of the subsequent
examples come from the \emph{elementary subgroup}\index{elementary
subgroup}\index{subgroup!elementary} of $\GL_2(R)$, i.~e.\ the subgroup
generated by elementary matrices; see \cite{cohn-66} for the algebraic
background, and \cite{blunck+h-01a} for the geometry behind.
\end{nrtxt}

\begin{exas}\label{bsp:matrizen}
Let $\gamma$ be a $2\times 2$ matrix over $R$.
\begin{enumerate}
\item If all entries of $\gamma$ commute with each other then we can calculate
the determinant $\det\gamma$ in the usual way. The given matrix is invertible
if, and only if, $\det\gamma\in R^*$. In this case $\gamma^{-1}$ can be
described in terms of $\det\gamma$ and the cofactor matrix of $\gamma$ as in
the case of a commutative field.

\item\label{bsp:matrizen.diag}

A diagonal matrix $\gamma=\diag(a,b)$ is invertible if, and only if $a$ and $b$
are units.

\item\label{bsp:matrizen.dreieck}

If we are given a lower triangular $2\times 2$ matrix $\gamma$ then
\begin{equation}\label{eq:dreieck}
    \gamma=: \Mat2{a&0\\c&d}
    =\Mat2{a&0\\0&1}
    \underbrace{\Mat2{1&0\\c&1}}_{\in\,\GL_2(R)}
    \Mat2{1&0\\0 &d}.
\end{equation}
We know from (\ref{eq:inv-matrizen}) that the second matrix on the right hand
side is invertible.

Suppose now that $a$ \emph{or} $d$ is a unit. By (\ref{bsp:matrizen.diag}) and
(\ref{eq:dreieck}), $\gamma$ is invertible if, and only if, $a$ \emph{and} $d$
are units. In this case
\begin{equation}\label{eq:dreicksinv}
    \gamma^{-1}=\Mat2{a^{-1}&0\\-d^{-1}ca^{-1}&d^{-1}}.
\end{equation}
Of course, there is a similar formula for the inverse of an upper triangular
matrix with invertible entries in the main diagonal.

\item\label{bsp:matrizen.ab=1}

Suppose that $a\in R$ is right invertible so that $ab=1$ for some $b\in R$. A
straightforward verification shows that
\begin{equation*}
\gamma:=\Mat2{a&0\\1-ba&b}\in\GL_2(R), \ \mbox{with} \
\gamma^{-1}=\Mat2{b&1-ba\\0&a}.
\end{equation*}
This means that for rings which are not Dedekind-finite there are invertible
\emph{lower\/} triangular matrices with \emph{both\/} diagonal entries not in
$R^*$. Also, somewhat surprisingly, the inverse of such a matrix is
\emph{upper\/} triangular.
\end{enumerate}
\end{exas}

\section{The distant relation}

\begin{nrtxt}\label{:distant}
The point set $\bP(R)$ is endowed with a relation
\emph{distant\/}\index{points!distant}\index{distant points} ($\dis$) which is
defined via the action of $\GL_2(R)$ on the set of pairs of points by
\begin{equation*}
  \dis:=\big(R(1,0), R(0,1)\big)^{\GL_2(R)}.
\end{equation*}
Letting $p=R(a,b)$ and $q= R(c,d)$ and taking into account
Theorem~\ref{thm:LRinvert} gives then
\begin{equation}\label{eq:dist-matrix}
  p\dis q\;\Leftrightarrow\; \Mat2{a&b\\c&d}\in \GL_2(R).
\end{equation}
The distant relation is symmetric, since exchanging two rows in an invertible
matrix does not influence its invertibility. In addition, $\dis$ is
anti-reflexive, because $R(1,0)\neq R(0,1)$ implies that distant points are
distinct\footnote{This is one of the rare occasions, where we need that $0\neq
1$ in $R$. Over the zero ring $R=\{0\}$ (which is excluded from our exposition)
we have $0=1$. Therefore, by defining the projective line as above, we obtain
$R(0,0)=R(1,0)=R(0,1)$. This means that $R(0,0)$ is the only point of this
projective line, and that $R(0,0)$ is distant to itself.}. However, in general
distinct points need not be distant. Cf.\ Theorem \ref{thm:koerper.dist} below.

Non-distant points ($\notdis$) are also called
\emph{neighbouring\/}\index{points!neighbouring}\index{neighbouring points} or
\emph{parallel}\index{parallel points}\index{points!parallel}; see, for
example, \cite{benz-73}, \cite{herz-95}, \cite{veld-95}. However, in these
lectures we shall use the term ``parallel'' in a different meaning which will
be explained in \ref{:parallel.allg}. The two notions ``parallel'' and
``neighbouring'' coincide precisely when $R$ is a local ring. See Theorem
\ref{thm:3} and our preliminary definition in \ref{def:parallel}.

A crucial property of the distant relation is stated in the following result on
the action of $\GL_2(R)$ on the projective line.
\end{nrtxt}

\begin{thm}\label{thm:3fach}
The group $\GL_2(R)$ acts \emph{$3$-$\dis$-transitively\/}\index{group
action!dis@$\dis$-transitive}\index{dis-transitive group
action@$\dis$-transitive group action} on $\bP(R)$, i.~e., transitively on the
set of triples of mutually distant points.
\end{thm}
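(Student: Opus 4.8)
**

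The plan is to reduce $3$-$\dis$-transitivity to a statement about the stabiliser of a single standard triple, exploiting the regular action of $\GL_2(R)$ on ordered bases of $R^2$. First I would fix the reference triple $\big(R(1,0), R(0,1), R(1,1)\big)$ and check that its three members are pairwise distant: this is immediate from the matrix criterion \eqref{eq:dist-matrix}, since each of the three $2\times 2$ matrices formed from pairs of the representatives $(1,0)$, $(0,1)$, $(1,1)$ is invertible (they are, up to sign, the identity or elementary matrices covered by \eqref{eq:inv-matrizen}). Because $\dis$ is defined as a $\GL_2(R)$-orbit and the distant relation is $\GL_2(R)$-invariant, it suffices to show that every triple of mutually distant points can be mapped by some $g\in\GL_2(R)$ onto this one fixed reference triple.

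The key step is the following normalisation. Let $p_1\dis p_2\dis p_3$ with $p_1\dis p_3$ as well, and choose admissible representatives $p_1=R(a_1,b_1)$, $p_2=R(a_2,b_2)$. Since $p_1\dis p_2$, the matrix with rows $(a_1,b_1)$ and $(a_2,b_2)$ lies in $\GL_2(R)$; call it $\gamma$. Applying $\gamma^{-1}$ on the right (equivalently, acting by $\gamma^{-1}$ on $\bP(R)$) sends $p_1\mapsto R(1,0)$ and $p_2\mapsto R(0,1)$, while the distant relation is preserved. So without loss of generality $p_1=R(1,0)$ and $p_2=R(0,1)$. Now write the image of the third point as $R(c,d)$. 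The conditions $p_3\dis p_1$ and $p_3\dis p_2$ translate, via \eqref{eq:dist-matrix} applied to the pairs $\{(c,d),(1,0)\}$ and $\{(c,d),(0,1)\}$, into the invertibility of $\SMat2{c&d\\1&0}$ and $\SMat2{c&d\\0&1}$. By Example \ref{bsp:matrizen} (\ref{bsp:matrizen.diag}) and the triangular computation in (\ref{bsp:matrizen.dreieck}), invertibility of these two matrices forces both $c\in R^*$ and $d\in R^*$.

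The remaining task is to move $R(c,d)$, with $c,d\in R^*$, onto $R(1,1)$ by an element of $\GL_2(R)$ that fixes both $R(1,0)$ and $R(0,1)$. The stabiliser of this ordered pair consists exactly of the invertible diagonal matrices $\diag(u,w)$ with $u,w\in R^*$ (by Example \ref{bsp:matrizen} (\ref{bsp:matrizen.diag})), and such a matrix sends $R(c,d)$ to $R(cu, dw)$. Choosing $u:=c^{-1}$ and $w:=d^{-1}$ — both units, which is precisely where $c,d\in R^*$ is needed — yields $R(cc^{-1}, dd^{-1})=R(1,1)$, the third member of the reference triple. Composing the two constructed elements of $\GL_2(R)$ gives a single group element carrying the arbitrary distant triple to the fixed one; its inverse carries the fixed triple to the arbitrary one, establishing transitivity on mutually distant triples.

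The main obstacle, and the point deserving care rather than difficulty, is the deduction that $c$ and $d$ are \emph{units} (two-sided invertible) rather than merely one-sided invertible, since over a non-commutative, non-Dedekind-finite ring these notions diverge. I would address this by invoking the explicit inverse formulae for triangular matrices in Example \ref{bsp:matrizen} (\ref{bsp:matrizen.dreieck}): invertibility of a triangular matrix with one unit entry forces the \emph{other} diagonal entry to be a unit as well, giving genuine two-sided inverses for $c$ and $d$ and legitimising the choice of $\diag(c^{-1},d^{-1})$ in the final normalisation.
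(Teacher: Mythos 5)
Your proposal is correct and follows essentially the same route as the paper: normalise the first two points to $R(1,0)$ and $R(0,1)$ using the invertible matrix guaranteed by the distant relation (equivalently, by \eqref{eq:dist-matrix}), deduce via the triangular-matrix criterion of Example \ref{bsp:matrizen} (\ref{bsp:matrizen.dreieck}) that the third point becomes $R(c,d)$ with $c,d\in R^*$, and finish with a diagonal matrix fixing the first two points. Your extra care about two-sided invertibility of $c$ and $d$ is well placed and is exactly what the cited example delivers.
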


\begin{proof}
(a) We consider the points $R(1,0)$, $R(0,1)$, and $R(1,1)$. They are
mutually distant by (\ref{eq:inv-matrizen}). Also, let $R(a,b)$ be a point
which is distant to $R(1,0)$ and $R(0,1)$. Consequently,
\begin{equation*}
\Mat2{1&0\\a&b}\in\GL_2(R)\mbox{~~and~~}\Mat2{a&b\\0 & 1}\in\GL_2(R).
\end{equation*}
Hence $a,b\in R^*$ by Example \ref{bsp:matrizen} (\ref{bsp:matrizen.dreieck}).
But this means that the matrix $\diag(a,b)\in\GL_2(R)$ takes $R(1,1)$ to
$R(a,b)$, whereas $R(1,0)$ and $R(0,1)$ remain unchanged.

(b)  Given three mutually distant points $p,q,r\in\bP(R)$ there is, by the
definition of the distant relation, a matrix $\gamma\in\GL_2(R)$ which takes
the pair of points $(p,q)$ to $\big(R(1,0),R(0,1)\big)$. Then, according to
(a), there is also an invertible matrix which takes $r^\gamma$ to $R(1,1)$,
while $R(1,0)$ and $R(0,1)$ remain invariant. Since this property holds for
every triple of mutually distant points, the assertion follows.
\end{proof}

\begin{nrtxt}
Let us determine the pointwise stabiliser $\Omega$, say, of
$\{R(1,0),R(0,1),R(1,1)\}$ under the action of $\GL_2(R)$ on the projective
line $\bP(R)$. If $\gamma$ is in this stabiliser then $\gamma=\diag(a,b)$,
because each of $R(1,0)$ and $R(0,1)$ has to coincide with its image. By
Example \ref{bsp:matrizen} (\ref{bsp:matrizen.diag}), $a$ and $b$ are units in
$R$. Moreover, we infer from $R(1,1)^\gamma=R(a,b)=R(1,1)$ that $a=b$. These
two conditions are also sufficient. Therefore
\begin{equation}\label{eq:stabilisator}
    \Omega=\{\diag(a,a)\mid a\in R^*\}.
\end{equation}
Now we ask for the kernel of the action of $\GL_2(R)$ on the projective line
$\bP(R)$ which clearly is contained in $\Omega$. If
$\gamma=\diag(a,a)\in\Omega$ is in this kernel then
\begin{equation*}
    R(1,x)^\gamma= R(a,xa)=R(1,a^{-1}xa)\mbox{~~for all~~}x\in R.
\end{equation*}
Recall that
\begin{equation*}
    \Z(R):=\{a\in R\mid ax=xa\mbox{~~for all~~}x\in R\}
\end{equation*}
is the \emph{centre}\index{centre!of a ring}\index{ring!centre of a} of $R$; it
is a subring of $R$. Therefore $a$ has to be unit in the centre of $R$.
Conversely, every matrix $\diag(a,a)$ with $a\in \Z(R)^*$ fixes $\bP(R)$
pointwise. It is easy to show (as in elementary linear algebra) that the kernel
of our group action is equal to the \emph{centre}\index{centre!of a
group}\index{group!centre of a} of $\GL_2(R)$, viz.\
\begin{eqnarray}\label{eq:zentrum.GL}\nonumber
    {\Z}\big({\GL_2(R)}\big)&=&\{\beta\in\GL_2(R)\mid \beta\xi =\xi\beta
                           \mbox{~~for all~~}\xi\in\GL_2(R)\}\\
                &\;=&\{\diag(a,a)\mid a\in \Z(R)^*\}.
\end{eqnarray}
As usual, the factor group $\GL_2(R) / {\Z}\big({\GL_2(R)}\big)=:\PGL_2(R)$ is
called a \emph{projective linear group}\index{group!projective
linear}\index{projective linear group}; it elements are called
\emph{projectivities\/}\index{projectivity} and can be considered as
permutations of $\bP(R)$.
\end{nrtxt}
\begin{thm}
The following statements are equivalent.
\begin{enumerate}
\item $\PGL_2(R)$ acts sharply transitive the set of triples of mutually
distant points.

\item The group $R^*$ of units in $R$ is contained in the centre $\Z(R)$.
\end{enumerate}
\end{thm}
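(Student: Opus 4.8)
The plan is to reduce the asserted equivalence to the triviality of a single point stabiliser, exploiting everything already assembled above. First I would recall that by Theorem~\ref{thm:3fach} the group $\GL_2(R)$ acts transitively on the set of (ordered) triples of mutually distant points, and that the kernel of the action of $\GL_2(R)$ on $\bP(R)$ equals $\Z(\GL_2(R))$, as computed in~(\ref{eq:zentrum.GL}). Consequently the induced action of $\PGL_2(R)=\GL_2(R)/\Z(\GL_2(R))$ on $\bP(R)$ is faithful, and it remains transitive on triples of mutually distant points. By the definition of a regular (sharply transitive) action in~\ref{:aktion.eigensch}, statement (a) is therefore equivalent to the stabiliser of one such triple in $\PGL_2(R)$ being trivial; by transitivity the choice of triple is immaterial.

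Next I would take the standard triple $\big(R(1,0),R(0,1),R(1,1)\big)$, whose pointwise stabiliser in $\GL_2(R)$ has already been identified as $\Omega=\{\diag(a,a)\mid a\in R^*\}$ in~(\ref{eq:stabilisator}). A coset $\gamma\,\Z(\GL_2(R))\in\PGL_2(R)$ fixes each of these three points precisely when $\gamma\in\Omega$, so the stabiliser of the triple in $\PGL_2(R)$ is the image of $\Omega$ under the canonical epimorphism, that is $\Omega/\Z(\GL_2(R))$ (note that $\Z(\GL_2(R))\subseteq\Omega$ since $\Z(R)^*\subseteq R^*$). This quotient is trivial if, and only if, $\Omega=\Z(\GL_2(R))$, i.e.\ exactly when $\{\diag(a,a)\mid a\in R^*\}=\{\diag(a,a)\mid a\in\Z(R)^*\}$.

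Finally, since $a\mapsto\diag(a,a)$ is injective, the last set equality amounts to $R^*=\Z(R)^*$. The only genuinely ring-theoretic point is the elementary observation that $\Z(R)^*=\Z(R)\cap R^*$: if $a\in\Z(R)$ is invertible in $R$ then $a^{-1}x=a^{-1}(xa)a^{-1}=a^{-1}(ax)a^{-1}=xa^{-1}$ for all $x\in R$, so $a^{-1}\in\Z(R)$, whence $a\in\Z(R)^*$; the reverse inclusion is clear. As $\Z(R)^*\subseteq R^*$ always holds, $R^*=\Z(R)^*$ is equivalent to $R^*\subseteq\Z(R)$, which is statement (b). Reading the chain of equivalences in both directions then yields (a) $\Leftrightarrow$ (b). I do not expect any serious obstacle, since transitivity, the kernel computation, and the description of $\Omega$ are already in hand; the argument is essentially bookkeeping about the induced quotient action together with the trivial fact that central units have central inverses.
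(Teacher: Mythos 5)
Your proposal is correct and follows exactly the route the paper intends: the paper's proof consists of the single remark that the theorem is ``an immediate consequence of (\ref{eq:stabilisator}) and (\ref{eq:zentrum.GL})'', and your argument is precisely the bookkeeping that makes this immediate consequence explicit (reducing sharp transitivity to the triviality of $\Omega/\Z(\GL_2(R))$ and noting that central units have central inverses). No gaps; the only difference is that you spell out what the paper leaves to the reader.
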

\begin{proof}
The result is an immediate consequence of (\ref{eq:stabilisator}) and
(\ref{eq:zentrum.GL}).
\end{proof}

The interested reader should also compare this result with the
characterisations given in \cite[Proposition~1.3.4]{herz-95}.

\begin{nrtxt}\label{:nachbarschaft}
Given a point $p\in\bP(R)$ let
\begin{equation*}
  \dis(p):=\{x\in\bP(R)\mid x\dis p\}.
\end{equation*}
If we consider $\bP(R)$ as the set of vertices of the \emph{distant
graph}\index{distant graph}, i.~e.\ the unordered graph of the symmetric
relation $\dis$, then $\dis(p)$ is just the
\emph{neighbourhood}\index{neighbourhood} of $p$ in this graph. Once a point
$p$ has been chosen, the points of $\bP(R)$ fall into two classes: The points
of $\dis(p)$ are called \emph{proper\/}\index{proper point}\index{point!proper}
(with respect to $p$), the remaining points are called
\emph{improper\/}\index{improper point}\index{point!improper} (with respect to
$p$).

As $\GL_2(R)$ acts transitively on $\bP(R)$ it suffices to describe the
neighbourhood of $R(1,0)$, a point which is also denoted by the symbol
$\infty$. By Example \ref{bsp:matrizen} (\ref{bsp:matrizen.dreieck}), a point
$R(a,b)$ is in $\dis(\infty)$ precisely when $b\in R^*$. But then we may assume
w.l.o.g. that $b=1$, because $R(a,b)=R(b^{-1}a,1)$. The
\emph{embedding}\index{embedding}
\begin{equation}\label{eq:einbettung}
    R\to \bP(R) : a \mapsto R(a,1)
\end{equation}
maps the affine line over $R$ injectively onto the subset $\dis(\infty)$ of the
projective line over $R$. We already met this embedding in \ref{:viele.punkte}.
It shows that the neighbourhood of any point has $\card R$ elements.

By virtue of (\ref{eq:einbettung}), we may even identify the affine line over
the ring $R$ with the subset $\dis(\infty)$. From
\begin{equation*}
    \underbrace{\RMat2{1 & -1\\0 &1}}_{\in\,\GL_2(R)}
    \Mat2{a & 1\\b &1}=\Mat2{a-b & 0\\b &1}
\end{equation*}
follows that---in affine terms---two points $a,b\in R$ are distant, precisely
when $a-b$ is a unit.
\end{nrtxt}

\begin{exa}\label{bsp:Z6-dist}
We continue the investigation of the projective line $\bP(\bZ_6)$; see
Example \ref{bsp:Z6}.
\begin{figure}[h]\begin{center}\unitlength1cm\small
\begin{picture}(5,5.2) 
 \put(0,0){\includegraphics[width=6cm]{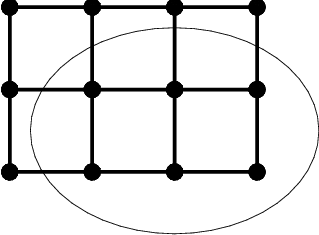}}
 \put(-0.25,4.55){$(1,0)$}
 \put(1.3,4.55){$(1,4)$}  
 \put(2.85,4.55){$(1,2)$}
 \put(4.4,4.55){$(3,2)$}

 \put(-0.9,2.6){$(1,3)$}
 \put(1.85,2.3){$(1,1)$}
 \put(3.4,2.3){$(5,1)$}
 \put(4.95,2.3){$(3,1)$}

 \put(-0.25,0.7){$(2,3)$}
 \put(1.3,0.7){$(4,1)$}
 \put(2.85,0.7){$(2,1)$}
 \put(4.4,0.7){$(0,1)$}

\end{picture}
\caption{\label{abb:Z6-dist}The distant relation on $\bP(\bZ_6)$}
\end{center}\end{figure}
In Figure \ref{abb:Z6-dist} each point of $\bP(\bZ_6)$ is labelled by one of
its admissible pairs. The distant relation on $\bP(\bZ_6)$ is illustrated in
the following way: Two distinct points are distant if they are \emph{not\/} on
a common line. The six points inside the ellipse comprise the neighbourhood of
$\infty=\bZ_6(1,0)$ in the distant graph.
\end{exa}

\begin{txt}
As a general theme, one aims at characterising algebraic properties of a ring
$R$ in terms of the distant relation on the associated projective line. Here is
a first result in this direction.
\end{txt}
\begin{thm}\label{thm:koerper.dist}
A ring $R$ is a field if, and only if, any two distinct points of the
projective line $\bP(R)$ are distant.
\end{thm}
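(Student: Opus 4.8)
The plan is to prove both directions of this equivalence, using the characterisation of the distant relation developed in \ref{:nachbarschaft}, especially the affine description ``$a-b$ is a unit''.

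\textbf{Sufficiency (the easy direction).} First I would assume $R$ is a field and show any two distinct points are distant. Since $\GL_2(R)$ acts transitively on $\bP(R)$ and the distant relation is $\GL_2(R)$-invariant, it suffices to show that every point $p\neq R(1,0)$ is distant to $\infty=R(1,0)$. By the embedding (\ref{eq:einbettung}), every point of the neighbourhood $\dis(\infty)$ has the form $R(a,1)$ with $a\in R$, and these are exactly the points $R(a,b)$ with $b\in R^*$. I would argue that when $R$ is a field, every point other than $R(1,0)$ lies in $\dis(\infty)$: a point $R(a,b)$ with $b\notin R^*$ forces $b=0$ (in a field non-units are zero), and then admissibility of $(a,0)$ forces $a\in R^*$, giving $R(a,0)=R(1,0)=\infty$. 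Hence every point distinct from $\infty$ is proper, i.e.\ distant to $\infty$, and transitivity finishes this direction.

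\textbf{Necessity (the interesting direction).} Conversely, I would assume that any two distinct points of $\bP(R)$ are distant and deduce that $R$ is a field, i.e.\ that every non-zero element of $R$ is invertible. The key is the affine criterion at the end of \ref{:nachbarschaft}: two affine points $a,b\in R$ (viewed as $R(a,1)$, $R(b,1)$) are distant precisely when $a-b$ is a unit. Take any $a\in R$ with $a\neq 0$. Then $R(a,1)$ and $R(0,1)$ are distinct points (since $a\neq 0$), hence distant by hypothesis, so $a-0=a$ is a unit. This already shows every non-zero element is invertible, which is exactly the definition of a field as used in this paper (recall the paper's convention that ``field'' allows non-commutative multiplication, i.e.\ means division ring).

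\textbf{Main obstacle.} The only subtlety I anticipate is bookkeeping about representatives and the distinction between admissible and non-admissible pairs. In the sufficiency direction I must be careful that ``$R(a,b)$ a point'' together with ``$R$ a field'' really does force either $b\in R^*$ or $R(a,b)=\infty$; this rests on the fact that over a field every pair representing a point is admissible and non-units are zero, so the case analysis is genuinely exhaustive. In the necessity direction the argument is clean once the affine distance criterion is invoked, and no commutativity is needed, which matches the paper's broad notion of field. I would therefore expect the write-up to be short, with the sufficiency direction requiring the more careful argument.
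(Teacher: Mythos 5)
Your argument is correct, and the two directions compare differently with the paper's proof. The necessity direction is essentially the paper's argument in disguise: the paper pairs $R(1,0)$ with $R(1,x)$ for $x\neq 0$ and invokes the triangular-matrix criterion of Example \ref{bsp:matrizen} (\ref{bsp:matrizen.dreieck}), while you pair $R(a,1)$ with $R(0,1)$ and invoke the affine criterion from \ref{:nachbarschaft}; these amount to the same computation. The sufficiency direction is where you genuinely diverge. The paper argues directly that over a field two distinct points have representatives $(a,b)$ and $(c,d)$ that are linearly independent in the left vector space $R^2$, so the matrix they form is invertible and (\ref{eq:dist-matrix}) applies --- a two-line argument needing no group action and no case analysis. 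You instead use transitivity of $\GL_2(R)$ on $\bP(R)$ together with $\GL_2(R)$-invariance of $\dis$ to reduce to the neighbourhood of $\infty$, and then classify the points of $\bP(R)$ over a field (either $b\in R^*$ or the point is $\infty$ itself); your care about admissible representatives is justified by Theorem \ref{thm:LRinvert}, which guarantees that the unit-or-not status of the second coordinate is independent of the chosen admissible pair. Both routes are sound; yours is the pattern that recurs for local rings in Theorem \ref{thm:proj.gerade.lokal}, whereas the paper's linear-independence argument is shorter and makes the ``easy'' direction genuinely easy --- so, contrary to your closing prediction, in the paper's treatment it is the sufficiency half that requires no care at all.
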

\begin{proof}
(a) Let $R$ be a field. Given distinct points $p=R(a,b)$ and $q=R(c,d)$ of
$\bP(R)$ we obtain $(0,0)\neq(a,b)\notin R(c,d)$ and $(0,0)\neq(c,d)\notin
R(a,b)$, since a one-dimensional vector space is spanned by each of its
non-zero vectors. This means that $(a,b)$ and $(c,d)$ are linearly independent
vectors of the left vector space $R^2$, whence $p\dis q$ follows from
(\ref{eq:dist-matrix}).

(b) Conversely, the point $R(1,0)$ is distinct from each point $R(1,x)$, where
$x$ varies in $R\setminus\{0\}$. By Example \ref{bsp:matrizen}
(\ref{bsp:matrizen.dreieck}), we obtain that every non-zero element of $R$ is
invertible or, said differently, that $R$ is a field.
\end{proof}

\section{Chain geometries}

\begin{nrtxt}
The only structure on the projective line over a ring we have encountered so
far is the distant relation. Suppose now that a field $K$ is contained in $R$,
as a subring. Thus $1\in K$ is the identity element of $R$, and $R$ can be
considered as a left or a right vector space over $K$. The ring $R$ is, by
definition, a \emph{$K$-algebra\/}\index{K-algebra@$K$-algebra}\index{algebra}
precisely when the field $K$ belongs to the centre of $R$.
\end{nrtxt}

\begin{lem}\label{lem:einbett}
The mapping
\begin{equation}\label{eq:einbett}
  \bP(K)\to \bP(R): K(k,l)\mapsto R(k,l)
\end{equation}
is well defined. It takes distinct points of\/ $\bP(K)$ to distant points of\/
$\bP(R)$.
\end{lem}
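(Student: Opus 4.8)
The plan is to lean on the single structural fact that the subring inclusion $K \subseteq R$ shares the identity $1$, so invertibility is inherited: a matrix $\gamma \in \GL_2(K)$ has its $K$-inverse with entries in $R$, and the products $\gamma\gamma^{-1}$ and $\gamma^{-1}\gamma$ still equal the identity matrix when read off in $R$. Hence $\GL_2(K) \subseteq \GL_2(R)$, and in particular $K^* \subseteq R^*$. Everything else is bookkeeping with the definitions.

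First I would verify that the image actually lies in $\bP(R)$. A point of $\bP(K)$ is $K(k,l)$ with $(k,l) \neq (0,0)$; since $K$ is a field, this non-zero vector extends to a basis of the left vector space $K^2$, so $(k,l)$ is admissible over $K$ and there is $\gamma \in \GL_2(K)$ with first row $(k,l)$. By the inclusion above $\gamma \in \GL_2(R)$, so $(k,l)$ is admissible over $R$ and $R(k,l)$ is a genuine point of $\bP(R)$.

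Next I would check independence of the representative. If $K(k,l) = K(k',l')$, then, these being the same one-dimensional left subspace, the two spanning vectors differ by a scalar: $(k',l') = u(k,l)$ with $u \in K \setminus \{0\} = K^* \subseteq R^*$. The converse (easy) direction of Theorem \ref{thm:LRinvert} then gives $R(k',l') = R(k,l)$, so the assignment $K(k,l) \mapsto R(k,l)$ is well defined.

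Finally, for the distant claim, suppose $K(k,l) \neq K(k',l')$. By Theorem \ref{thm:koerper.dist} these two points are distant in $\bP(K)$, so by the criterion (\ref{eq:dist-matrix}) the $2\times 2$ matrix with rows $(k,l)$ and $(k',l')$ belongs to $\GL_2(K) \subseteq \GL_2(R)$; reading (\ref{eq:dist-matrix}) now over $R$ yields $R(k,l) \dis R(k',l')$. I expect no genuine obstacle: the whole content is the transfer of matrix invertibility along the inclusion $K \subseteq R$. The one point deserving care is to keep admissibility over $K$ and admissibility over $R$ distinct and to invoke explicitly that a $K$-inverse is simultaneously an $R$-inverse — precisely where the shared identity $1 \in K$ is used.
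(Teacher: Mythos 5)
Your proof is correct and rests on exactly the same observation as the paper's, namely that $\GL_2(K)\subset\GL_2(R)$ because a $K$-inverse is simultaneously an $R$-inverse; the paper simply states this inclusion and calls the rest immediate, whereas you have written out the bookkeeping (admissibility transfers, well-definedness via Theorem \ref{thm:LRinvert}, distance via Theorem \ref{thm:koerper.dist} and (\ref{eq:dist-matrix})) in full. All the details check out.
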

\begin{proof}
The assertions are immediate from $\GL_2(K)\subset\GL_2(R)$.
\end{proof}
\begin{txt}
The following definition is taken from a paper by \textsc{Claudio Bartolone}
\cite{bart-89}. For a systematic account see \cite{blunck+h-00a}, and for the
particular case when $R$ is an algebra over $K$ the reader should compare with
\cite{benz-73}, \cite{blunck+he-05}, and \cite{herz-95}.
\end{txt}

\begin{defi}
Let $R$ be a ring containing a field $K$, as a subring. Also, let $C_0$ be the
image of the projective line $\bP(K)$ under the embedding (\ref{eq:einbett}). A
subset of $\bP(R)$ is called a
$K$-\emph{chain\/}\index{chain}\index{K-chain@$K$-chain} (or shortly a
\emph{chain}, $K$ being understood) if it belongs to set
\begin{equation*}
   \cC(K,R):=C_0^{\GL_2(R)}.
\end{equation*}
The \emph{chain geometry\/}\index{chain geometry} over $(K,R)$ is the structure
\begin{equation*}
  \Sigma(K,R) :=\big(\bP(R),\cC(K,R)\big).
\end{equation*}
\end{defi}
\begin{txt}
By definition, all chains arise from the \emph{standard chain\/}\index{standard
chain}\index{chain!standard} $C_0$ under the action of the group $\GL_2(R)$.
Observe that we refrain from excluding the trivial case when $R=K$.
\end{txt}

\begin{nrtxt}
If $\Sigma(K,R)$ and $\Sigma(K',R')$ are chain geometries then an
\emph{isomorphism\/}\index{isomorphism!of chain geometries}\index{chain
geometries!isomorphism of} is a bijection $\varphi:\bP(R)\to\bP(R')$ preserving
chains in both directions. By definition, the group $\PGL_2(R)$ is a group of
automorphisms of $\Sigma(K,R)$.

Our first observation is a characterisation of the distant relation $\dis$ of
$\bP(R)$ in terms of a chain geometry $\Sigma(K,R)$ (see \cite[2.4.2]{herz-95}
for the case of algebras):
\end{nrtxt}

\begin{thm}\label{thm:distant}
Let $p,q\in\bP(R)$ be distinct points of\/ $\Sigma(K,R)$. Then $p\dis q$ holds
if, and only if, there is a chain $D\in\cC(K,R)$ joining $p$ and $q$.
\end{thm}

\begin{proof}
By the definition of the distant relation in \ref{:distant}, we know that
$p\dis q$ implies $p=R(1,0)^\gamma$, $q=R(0,1)^\gamma$ for some $\gamma\in
\GL_2(R) $. Hence in this case $p,q\in C_0^\gamma\in\cC(K,R)$.

Conversely, if $p,q\in C_0^\gamma\in\cC(K,R)$, with $\gamma\in\GL_2(R) $, then
$p^{\gamma^{-1}}$ and $q^{\gamma^{-1}}$ are distinct points of the standard
chain $C_0=\bP(K)$. By Lemma \ref{lem:einbett}, we have $p^{\gamma^{-1}}\dis
q^{\gamma^{-1}}$. Since $\gamma$ preserves $\dis$, this proves the assertion.
\end{proof}

\begin{txt}
Given three mutually distant points we now want to determine the chains through
them. Note that, by Theorem \ref{thm:distant}, any two distinct points on a
chain are distant.
\end{txt}

\begin{thm}\label{thm:3punkte}
Let the points $p,q,r\in \bP(R)$ be mutually distant. Then there is at least
one chain $D\in\cC(K,R)$ containing $p$, $q$, and $r$.
\end{thm}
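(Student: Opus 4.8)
The plan is to use the $3$-distant-transitivity of $\GL_2(R)$, which was just proved in Theorem~\ref{thm:3fach}, to reduce the problem to a single standard configuration. Since $p$, $q$, $r$ are mutually distant, there is a $\gamma\in\GL_2(R)$ carrying the triple $(p,q,r)$ to the triple $\big(R(1,0),R(0,1),R(1,1)\big)$. These three standard points all lie on the standard chain $C_0=\bP(K)$, because they are the images under the embedding \eqref{eq:einbett} of the points $K(1,0)$, $K(0,1)$, $K(1,1)$ of $\bP(K)$. Thus $C_0$ itself is a chain containing the normalised triple.

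Applying $\gamma^{-1}$ then produces the desired chain through the original points. Concretely, $D:=C_0^{\gamma^{-1}}$ belongs to $\cC(K,R)=C_0^{\GL_2(R)}$ by the very definition of a chain, and since $\gamma$ takes $(p,q,r)$ to the standard triple on $C_0$, the inverse $\gamma^{-1}$ takes that triple back to $(p,q,r)$, all three of which therefore lie on $D$. This establishes existence, which is exactly what the statement asserts.

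The only point needing a brief verification is that the three standard points are genuinely contained in $C_0$ and are mutually distant, so that Theorem~\ref{thm:3fach} applies in the direction I want. For distantness this was already recorded in the proof of Theorem~\ref{thm:3fach}, where \eqref{eq:inv-matrizen} shows $R(1,0)$, $R(0,1)$, $R(1,1)$ to be mutually distant; for membership in $C_0$ one simply notes that $K(1,0)$, $K(0,1)$, $K(1,1)$ are points of $\bP(K)$ and that the embedding \eqref{eq:einbett} sends them to $R(1,0)$, $R(0,1)$, $R(1,1)$.

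I do not expect a serious obstacle here: the statement claims only \emph{existence} of a chain, not uniqueness, so there is no need to control how many chains pass through the three points (this is presumably the subject of a subsequent result). The entire argument is a transport-of-structure along a group element, and the main substance has been front-loaded into Theorem~\ref{thm:3fach}. The one thing to be careful about is the direction of the action: I must apply $\gamma^{-1}$ to the \emph{chain} $C_0$ rather than to the points, and confirm that $\cC(K,R)$ is closed under the action of $\GL_2(R)$, which is immediate since it is defined as the orbit $C_0^{\GL_2(R)}$.
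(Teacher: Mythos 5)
Your proposal is correct and is essentially the paper's own argument: both reduce to the standard triple $R(1,0)$, $R(0,1)$, $R(1,1)$ on $C_0$ via Theorem \ref{thm:3fach} and transport $C_0$ back by a group element (the paper simply chooses $\gamma$ in the opposite direction, writing $D=C_0^\gamma$ with $p=R(1,0)^\gamma$, etc., instead of using $\gamma^{-1}$). Your extra checks that the standard points lie on $C_0$ and are mutually distant are sound and match what the paper relies on.
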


\begin{proof}
As the group $\GL_2(R) $ acts $3$-$\dis$-transitively on $\bP(R)$ by Theorem
\ref{thm:3fach}, there exists a $\gamma\in\GL_2(R)$ with $p=R(1,0)^\gamma$,
$q=R(0,1)^\gamma$, and $r=R(1,1)^\gamma$. Obviously, $D:=C_0^\gamma$ is a chain
through $p$, $q$, and $r$.
\end{proof}

\begin{txt}
The essential result on the group action of $\GL_2(R) $ on $\Sigma(K,R)$ is as
follows:
\end{txt}

\begin{thm}\label{thm:3fach.kette}
Let $D,D'\in\cC(K,R)$ be chains. Suppose, furthermore, that $p,q,r\in D$ and
$p',q',r'\in D'$ are, respectively, three mutually distinct points. Then there
exists a matrix $\gamma\in\GL_2(R) $ such that $p^\gamma=p'$, $q^\gamma=q'$,
$r^\gamma=r'$, and $ D^\gamma= D'$.
\end{thm}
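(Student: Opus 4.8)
The plan is to reduce both chains to the standard chain $C_0$ while simultaneously moving the prescribed triples to the standard triple $\bigl(R(1,0),R(0,1),R(1,1)\bigr)$, and then to compose the two reductions. The key technical input is the behaviour of the embedded copy of $\GL_2(K)$.

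First I would record the essential tool over the subfield. By Theorem \ref{thm:koerper.dist}, any two distinct points of $\bP(K)$ are distant, so over $K$ a triple of distinct points coincides with a triple of mutually distant points. Applying Theorem \ref{thm:3fach} to the field $K$ therefore shows that $\GL_2(K)$ acts transitively on the triples of distinct points of $\bP(K)$. Moreover, every $\beta\in\GL_2(K)\subset\GL_2(R)$ satisfies $C_0^\beta=C_0$, since $\beta$ merely permutes the points of $C_0$; indeed the embedding (\ref{eq:einbett}) is $\GL_2(K)$-equivariant because $\GL_2(K)\subset\GL_2(R)$ (cf.\ Lemma \ref{lem:einbett}), so the action of $\GL_2(K)$ on $C_0$ mirrors its action on $\bP(K)$.

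Next I would bring $D$ into standard position. Since $\cC(K,R)=C_0^{\GL_2(R)}$, we may write $D=C_0^{\alpha}$ for some $\alpha\in\GL_2(R)$. As $p,q,r\in D$ are distinct, the points $p^{\alpha^{-1}},q^{\alpha^{-1}},r^{\alpha^{-1}}$ are three distinct points of $C_0=\bP(K)$, and by the transitivity just established there is a $\beta\in\GL_2(K)$ carrying them to $R(1,0),R(0,1),R(1,1)$. Putting $\sigma:=\alpha^{-1}\beta$ gives $(p,q,r)^{\sigma}=\bigl(R(1,0),R(0,1),R(1,1)\bigr)$ and $D^{\sigma}=C_0^{\beta}=C_0$. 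The identical argument applied to $D'$ and $p',q',r'$ yields a $\sigma'\in\GL_2(R)$ with $(p',q',r')^{\sigma'}=\bigl(R(1,0),R(0,1),R(1,1)\bigr)$ and $(D')^{\sigma'}=C_0$. Finally I would set $\gamma:=\sigma(\sigma')^{-1}$; then $(p,q,r)^{\gamma}=\bigl(R(1,0),R(0,1),R(1,1)\bigr)^{(\sigma')^{-1}}=(p',q',r')$ and $D^{\gamma}=C_0^{(\sigma')^{-1}}=D'$, as desired.

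The main obstacle, and the point where a naive appeal to the $3$-$\dis$-transitivity of $\GL_2(R)$ alone (Theorem \ref{thm:3fach}) breaks down, is that a matrix realising merely $(p,q,r)\mapsto(p',q',r')$ need not send $D$ to $D'$: in general three mutually distant points lie on more than one chain, since the stabiliser $\Omega$ of the three reference points from (\ref{eq:stabilisator}) does not fix $C_0$ setwise unless the units involved normalise $K$ (as happens, e.g., when $R$ is a $K$-algebra). The device that overcomes this is to insist that the normalisation of each triple be carried out by a matrix $\beta$ over the subfield $K$, which automatically stabilises $C_0$; the whole argument then hinges on the two facts isolated in the second paragraph, namely that the embedded $\GL_2(K)$ both preserves $C_0$ and acts $3$-transitively on it.
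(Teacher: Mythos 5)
Your proposal is correct and follows essentially the same route as the paper: reduce each chain to the standard chain $C_0$ by some element of $\GL_2(R)$, normalise the resulting triple of distinct points of $C_0$ by an element of the embedded $\GL_2(K)$ (which stabilises $C_0$ and acts $3$-transitively on it), and compose one reduction with the inverse of the other. Your composite $\gamma=\sigma(\sigma')^{-1}$ is exactly the paper's $\gamma_1\gamma_2\gamma_2'^{-1}\gamma_1'^{-1}$, and your explicit justification of the $3$-transitivity of $\GL_2(K)$ on $C_0$ via Theorems \ref{thm:koerper.dist} and \ref{thm:3fach} merely spells out what the paper asserts without comment.
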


\begin{proof}
There exists a matrix $\gamma_1\in\GL_2(R) $ mapping $D$ to the standard chain
$C_0$. Put $p_1:=p^{\gamma_1}$, $q_1:=q^{\gamma_1}$, $r_1:=r^{\gamma_1}$. The
group $\GL_2(K)\subset\GL_2(R)$ leaves $C_0$ invariant and acts $3$-fold
transitively on $C_0$. Hence there is a $\gamma_2\in\GL_2(K)$ with
$p_1^{\gamma_2}=R(1,0)$, $q_1^{\gamma_2}=R(0,1)$, $r_1^{\gamma_2}=R(1,1)$.
Then, we also have $C_0^{\gamma_2}=C_0$.

Define $\gamma_1'$ and $\gamma_2'$ accordingly. Then
$\gamma=\gamma_1\gamma_2\gamma_2'^{-1}\gamma_1'^{-1}$ has the required
properties.
\end{proof}

Now it is easy to determine the number of chains containing three mutually
distant points:

\begin{thm}\label{thm:normalisator}
Let
\begin{equation*}
   N:=\{ n\in R^* \mid n^{-1}K^*n=K^* \}
\end{equation*}
be the \emph{normaliser}\index{normaliser} of $K^*$ in $R^*$. Then the
following assertions hold:
\begin{enumerate}

\item\label{thm:normalisator.a}

The set of chains through any three mutually distant points of\/ $\Sigma(K,R)$
is in $1$-$1$-correspondence with the set
\begin{equation*}
   \{Nr\mid r\in R^*\}
\end{equation*}
of right cosets of $N$ in the multiplicative group $R^*$.

\item\label{thm:normalisator.b}

In\/ $\Sigma(K,R)$ there exists exactly one chain through any three mutually
distant points if, and only if, $K^*$ is a normal subgroup of $R^*$.
\end{enumerate}
\end{thm}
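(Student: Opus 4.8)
The plan is to reduce everything to counting the chains through the single standard triple $\Delta_0:=\big(R(1,0),R(0,1),R(1,1)\big)$ and then transport the count to an arbitrary mutually distant triple by $3$-$\dis$-transitivity. Since $\GL_2(R)$ permutes the chains (each chain being of the form $C_0^\gamma$) and, by Theorem \ref{thm:3fach}, acts transitively on triples of mutually distant points, for any mutually distant $p,q,r$ I can pick $\sigma\in\GL_2(R)$ with $(p,q,r)=(R(1,0)^\sigma,R(0,1)^\sigma,R(1,1)^\sigma)$; then $D\mapsto D^{\sigma^{-1}}$ is a bijection between the chains through $p,q,r$ and those through $\Delta_0$. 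Hence it suffices to set up a $1$-$1$-correspondence between the chains through $\Delta_0$ and the right cosets in $N\backslash R^*$.

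First I would describe the chains through $\Delta_0$. Let $D=C_0^\gamma$ be such a chain. Then $R(1,0)^{\gamma^{-1}},R(0,1)^{\gamma^{-1}},R(1,1)^{\gamma^{-1}}$ are three distinct points of $C_0=\bP(K)$, and since $\GL_2(K)\subset\GL_2(R)$ leaves $C_0$ invariant and acts $3$-fold transitively on it, there is a $\delta\in\GL_2(K)$ carrying them back to $R(1,0),R(0,1),R(1,1)$. Then $\delta^{-1}\gamma$ fixes each of the three standard points, so by the stabiliser computation (\ref{eq:stabilisator}) it equals $\diag(a,a)$ for some $a\in R^*$; and because $C_0^{\delta^{-1}}=C_0$ we get $D=C_0^{\delta^{-1}\gamma}=C_0^{\diag(a,a)}$. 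Conversely every $C_0^{\diag(a,a)}$ obviously passes through $\Delta_0$. Thus the chains through $\Delta_0$ are precisely the sets $C_0^{\diag(a,a)}$, $a\in R^*$.

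The heart of the argument is then to decide when two of these coincide, and this is where the normaliser enters. Using the right action $R(k,l)^{\diag(a,a)}=R(ka,la)$ together with Theorem \ref{thm:LRinvert} (which lets me rewrite $R(ka,a)=R(a^{-1}ka,1)$), I would first show that $R(m,1)\in C_0$ holds exactly for $m\in K$, and conclude that $C_0^{\diag(a,a)}\subseteq C_0$ is equivalent to $a^{-1}Ka\subseteq K$. The point I expect to require the most care is that lying in the setwise stabiliser of $C_0$ demands the full equality $C_0^{\diag(a,a)}=C_0$, i.e. also the reverse inclusion; applying the same reasoning to the inverse $\diag(a^{-1},a^{-1})$ supplies $aKa^{-1}\subseteq K$, and the two inclusions combine to $a^{-1}Ka=K$, equivalently $a^{-1}K^*a=K^*$, i.e. $a\in N$. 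Since in general $C_0^\gamma=C_0^{\gamma'}$ iff $\gamma\gamma'^{-1}$ fixes $C_0$, this yields $C_0^{\diag(a,a)}=C_0^{\diag(a',a')}$ iff $aa'^{-1}\in N$ iff $Na=Na'$. Hence $C_0^{\diag(a,a)}\mapsto Na$ is a well-defined bijection onto $N\backslash R^*$, which proves (\ref{thm:normalisator.a}).

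Finally, (\ref{thm:normalisator.b}) is immediate from (\ref{thm:normalisator.a}): the number of chains through any mutually distant triple is the index $[R^*:N]$, which equals $1$ precisely when $N=R^*$, that is, when every unit normalises $K^*$ — exactly the statement that $K^*$ is a normal subgroup of $R^*$.
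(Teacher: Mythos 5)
Your proposal is correct and follows essentially the same route as the paper: reduce to the standard triple, identify the chains through it as the orbit of $C_0$ under the pointwise stabiliser $\Omega=\{\diag(a,a)\mid a\in R^*\}$, show that the stabiliser of $C_0$ in $\Omega$ corresponds to $N$, and invoke the orbit--coset correspondence. The only differences are presentational — you re-derive inline what the paper delegates to Theorem \ref{thm:3fach.kette} and to formula (\ref{eq:index.stabil}) — and your explicit care about obtaining the full equality $a^{-1}Ka=K$ (not just one inclusion) is a point the paper passes over silently.
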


\begin{proof} We recall from (\ref{eq:stabilisator}) that the subgroup
\begin{equation*}
   \Omega=\{\diag(a,a)\mid a\in R^*\}\cong R^*
\end{equation*}
of $\GL_2(R) $ is the pointwise stabiliser of the set $\{R(1,0), R(0,1),
R(1,1)\}$. So, by Theorem \ref{thm:3fach.kette}, the chains through $R(1,0)$,
$R(0,1)$, $R(1,1)$ are precisely the images $C_0^\omega$, where $\omega$ ranges
in $\Omega$. Since
\begin{equation*}
  R(1,x)^{\Omega} = \{ R(1,a^{-1}xa)\mid a\in R^*\}
\end{equation*}
holds, in particular, for all $x\in K^*$, the stabiliser of the standard chain
$C_0$ in $\Omega$ is
\begin{equation*}
   \Omega_{C_0}=\{\diag(n,n)\mid n\in N\}\cong N.
\end{equation*}
So, by (\ref{eq:index.stabil}), assertion (a) follows for the three given
points and, by Theorem \ref{thm:3fach.kette}, for any three pairwise distant
points.

Of course, the condition in (\ref{thm:normalisator.b}) just means that $R^*=N$.
 \end{proof}

\begin{exas}\label{bsp:1kette}
In each of the following examples there is a unique chain through any three
distinct points of $\Sigma(K,R)$:
\begin{enumerate}
\item\label{bsp:1kette.algebra} Suppose that $K$ belongs to the centre of
    $R$, i.~e., $R$ is a $K$-algebra. Then, since $K^*$ is in the centre of
    $R^*$, its normaliser $N$ coincides with $R^*$. Most of the examples
    which we shall encounter later on will be of this kind.

\item Let $R$ be a commutative ring. Then the assumptions of Example
(\ref{bsp:1kette.algebra}) are satisfied without imposing a condition on $K$.

\item Suppose that $K^*=R^*$. Then $N=R^*=K^*$ is trivially true. Observe
    that $K^*=R^*$ does not mean that $K=R$; take, for example, a
    polynomial ring $K[T]$ over a commutative field $K$ in an indeterminate
    $T$; see also \cite[Example~2.5~(a)]{blunck+h-00a}.

\item\label{bsp:1kette.GF.4} Let $\bZ_2=\GF(2)$ be the field with two elements.
Also let $R=\bZ_2^{2\times 2}$ be the ring of $2\times 2$ matrices over
$\bZ_2$. There are six invertible elements in this ring, namely
\begin{equation*}
  \Mat2{1&0\\0&1},\;\Mat2{1&0\\1&1},\;\Mat2{0&1\\1&0},\;\Mat2{0&1\\1&1},\;
  \Mat2{1&1\\1&0},\;\Mat2{1&1\\0&1}.
\end{equation*}
The centre of $R$ is given by $\Z(R)=\{\diag(x,x)\mid x\in\bZ_2\}$. We put
\begin{equation*}
    K:=\left\{\left.\Mat2{x & y\\ y & x+y} \;\right|\; x,y\in\bZ_2 \right\}.
\end{equation*}
It is easily seen that $K$ is a subring of $R$ which is isomorphic to
$\GF(4)$, i.~e.\ the field with $4$ elements. Of course,
$K^*\not\subset\Z(R)$. Since $\card R^*=6$, the multiplicative group $K^*$
has index $2$ in $R^*$ and therefore is normal.
\end{enumerate}
\end{exas}

\begin{txt}
We now determine the intersection of all chains through three mutually distant
points of a chain geometry $\Sigma(K,R)$. To this end we introduce the field
\begin{equation*}
     F:=\bigcap\limits_{a\in R^*}a^{-1}Ka
\end{equation*}
which is a subring of $R$. Consequently, we can embed the projective line
$\bP(F)$ in $\bP(R)$ and define a chain geometry $\Sigma(F,R)$ as above. Its
chains will be called \emph{$F$-chains\/} in order to distinguish them from the
chains which arise from $\Sigma(K,R)$.
\end{txt}

\begin{thm}
Let $p,q,r\in\bP(R)$ be mutually distant points. Then the intersection of all
chains of\/ $\Sigma(K,R)$ through $p,q,r$ is an $F$-chain.
\end{thm}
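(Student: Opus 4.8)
The plan is to reduce the general statement to the distinguished triple $R(1,0)$, $R(0,1)$, $R(1,1)$ by means of the $3$-$\dis$-transitivity of $\GL_2(R)$ (Theorem \ref{thm:3fach}), and then to compute the chains through that triple explicitly. First I would choose, by Theorem \ref{thm:3fach}, a matrix $\gamma\in\GL_2(R)$ carrying $R(1,0)$, $R(0,1)$, $R(1,1)$ to $p,q,r$. Since the chains of $\Sigma(K,R)$ form a single $\GL_2(R)$-orbit, and likewise the $F$-chains form the orbit of the standard $F$-chain, the projectivity induced by $\gamma$ permutes chains among chains and $F$-chains among $F$-chains. As a bijection it commutes with intersection, and it maps the family of chains through the standard triple bijectively onto the family of chains through $p,q,r$. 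Hence it suffices to treat the standard triple: once the intersection of all chains through $R(1,0)$, $R(0,1)$, $R(1,1)$ is shown to be an $F$-chain, its $\gamma$-image, which is the intersection of all chains through $p,q,r$, is again an $F$-chain.

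For the standard triple I would argue as in the proof of Theorem \ref{thm:normalisator}: by Theorem \ref{thm:3fach.kette} together with (\ref{eq:stabilisator}), the chains through $R(1,0)$, $R(0,1)$, $R(1,1)$ are exactly the images $C_0^{\diag(a,a)}$ with $a\in R^*$, where $C_0=\bP(K)$ is the standard chain. The key computation is to identify each such conjugate. Writing $C_0=\{R(x,1)\mid x\in K\}\cup\{R(1,0)\}$ and applying $\diag(a,a)$ on the right, the point $R(x,1)$ goes to $R(xa,a)$ and $R(1,0)$ to $R(a,0)$. Using Theorem \ref{thm:LRinvert} and the unit $a$, I would normalise $R(a,0)=R(1,0)$ and $R(xa,a)=R(a^{-1}xa,1)$ (multiply the representative $(a^{-1}xa,1)$ on the left by $a$). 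As conjugation by the unit $a$ is a ring automorphism fixing $1$, the set $a^{-1}Ka$ is a subfield isomorphic to $K$, and the upshot is
\[
   C_0^{\diag(a,a)}=\{R(y,1)\mid y\in a^{-1}Ka\}\cup\{R(1,0)\},
\]
so the conjugate chain is precisely the $(a^{-1}Ka)$-chain through the three base points.

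It then remains to intersect these sets over all $a\in R^*$. Every $C_0^{\diag(a,a)}$ contains $R(1,0)$, so $R(1,0)$ lies in the intersection. For the affine part I would invoke the injectivity noted in \ref{:viele.punkte}: the map $y\mapsto R(y,1)$ is injective, so $R(y,1)\in C_0^{\diag(a,a)}$ holds if and only if $y\in a^{-1}Ka$. Consequently a point $R(y,1)$ survives in the intersection exactly when $y\in\bigcap_{a\in R^*}a^{-1}Ka=F$, giving
\[
   \bigcap_{a\in R^*}C_0^{\diag(a,a)}=\{R(y,1)\mid y\in F\}\cup\{R(1,0)\},
\]
which is the image of $\bP(F)$ under the embedding (\ref{eq:einbett}), i.e.\ the standard $F$-chain. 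Transporting by $\gamma$ as above finishes the proof. I expect the only genuine obstacle to be the explicit identification of $C_0^{\diag(a,a)}$ with the $(a^{-1}Ka)$-chain; once the normalisation $R(xa,a)=R(a^{-1}xa,1)$ is in place and the affine parametrisation is known to be injective, both the intersection and the reduction to the standard triple are routine.
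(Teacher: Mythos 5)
Your proposal is correct and follows essentially the same route as the paper: reduce to the triple $R(1,0)$, $R(0,1)$, $R(1,1)$ (the paper does this with a ``w.l.o.g.'', which you justify explicitly via Theorem \ref{thm:3fach}), identify the chains through it as the images $C_0^{\diag(a,a)}$ with $a\in R^*$ using Theorem \ref{thm:3fach.kette} and (\ref{eq:stabilisator}), compute $C_0^{\diag(a,a)}=\{R(1,0)\}\cup\{R(a^{-1}ka,1)\mid k\in K\}$, and intersect to obtain $\{R(1,0)\}\cup\{R(f,1)\mid f\in F\}=\bP(F)$. The extra care you take with the normalisation $R(xa,a)=R(a^{-1}xa,1)$ and the injectivity of $y\mapsto R(y,1)$ only makes explicit what the paper leaves implicit.
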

\begin{proof} We consider w.l.o.g.\ the points $R(1,0)$, $R(0,1)$, and $R(1,1)$.
According to Theorem \ref{thm:3fach.kette} the chains joining them are exactly
the images $C_0^\omega$, with $\omega\in\Omega$; compare
(\ref{eq:stabilisator}). Given a matrix $\diag(a,a)\in\Omega$ we compute
\begin{equation*}
  C_0^\omega
=\{R(a,0)\}\cup\{R(ka,a)\mid k\in K\} =\{R(1,0)\}\cup\{R(a^{-1}ka,1)\mid k\in
K\}.
\end{equation*}
Therefore
\begin{eqnarray*}
  \bigcap\limits_{\omega\in\Omega} C_0^\omega
  &=&
  \{R(1,0)\}\cup\bigcap\limits_{a\in R^*}\{R(a^{-1}ka,1)\mid k\in K\}\\
  &=&
  \{R(1,0)\}\cup\{R(f,1)\mid f\in F\},
\end{eqnarray*}
which equals $\bP(F)$, considered as a subset of $\bP(R)$.
\end{proof}

\section{Local rings, local algebras, and Laguerre algebras}

\begin{nrtxt}\label{:jacobson}
Let $R$ be a ring. The \emph{Jacobson radical}\index{Jacobson radical} of a
ring $R$, named after \textsc{Nathan Jacobson} (1910--1999) and denoted by
$\rad R$, is the intersection of all maximal left (or right) ideals of $R$. It
is a two sided ideal of $R$ and its elements can be characterised as follows:
\begin{equation*}
 b\in\rad\,R \,\Leftrightarrow\, 1-ab\in R^*\mbox{ for all }a\in R
 \,\Leftrightarrow\, 1-ba\in R^*\mbox{ for all }a\in R;
\end{equation*}
see \cite[pp.~53--54]{lam-91}.

Suppose that $R$ is \emph{left artinian}\index{left artinian
ring}\index{ring!left artinian}---after \textsc{Emil Artin}
(1898--1962)---i.~e., there does not exist an infinite strictly descending
chain of left ideals of $R$. then $\rad R$ is the largest \emph{nilpotent} left
ideal\index{nilpotent left (right) ideal}\index{ideal!nilpotent left (right)},
and it is also the largest nilpotent right ideal; this means that $(\rad
R)^n=0$ for some positive integer $n$ \cite[Theorem~4.12]{lam-91}.
Consequently, $\rad R$ is actually a nilpotent ideal. All this holds, in
particular, if $R$ is a finite ring. See \cite{kruse+p-69} for further
references on nilpotent rings.
\end{nrtxt}

\begin{nrtxt}\label{:local.ring}
A ring $R$ is called a \emph{local ring}\index{ring!local}\index{local ring} if
$R\setminus R^*$ is an ideal\footnote{By an ``ideal'' we always mean a
two-sided ideal. The term ``local ring'' comes from algebraic geometry: At any
point $p$ of an algebraic variety, the rational functions which are ``locally''
regular (i.~e.\ regular in some neighbourhood of $p$) form a local ring. The
non-units in this ring are those functions which vanish at $p$. Compare
\cite[p.~72]{shaf-77}.} of $R$. There are several equivalent definitions of a
local ring and the interested reader should compare with
\cite[Theorem~19.1]{lam-91}. We just mention that a ring $R$ is local if, and
only if, it has an ideal $J\neq R$ containing all ideals other than $R$. This
is equivalent to saying that $R$ has a unique maximal ideal.

Let $R$ be a local ring. Since $R\setminus R^*$ is the only maximal left ideal
of $R$, we obtain
\begin{equation*}
  \rad R=R\setminus R^*,
\end{equation*}
Since $\rad R$ is an ideal, we can construct the factor ring $\overline
R:=R/\rad R$ based upon the canonical epimorphism $R\to\overline R: a\mapsto
\overline a$. If $\overline a\neq \overline 0$ then $a\in R^*$, whence
$\overline a$ is a unit in $\overline R$. This means that $\overline R$ is a
field, and we have the property
\begin{equation}\label{eq:a-quer}
    a\in R^* \;\Leftrightarrow\; \overline a \in \overline R^*.
\end{equation}
Given a matrix $\gamma=(\gamma_{ij})$ with entries in $R$ we put
$\overline\gamma:=(\overline{\gamma_{ij}})$. Then one can show as above that
\begin{equation}\label{eq:gamma-quer}
    \gamma\in \GL_m(R) \;\Leftrightarrow\;
    \overline \gamma \in \GL_m(\overline R)
\end{equation}
holds for all natural numbers $m\geq 1$.
\end{nrtxt}

\begin{nrtxt}
A $K$-algebra $R$ is said to be \emph{local\/}\index{local
algebra}\index{algebra!local} if $R$ is a local ring. Clearly, $K$ and $\rad R$
are subspaces of the vector space $R$ (over $K$), and they meet at $0$ only.
If, moreover, the group $(R,+)$ is the direct sum of its subgroups $K$ and
$\rad R$ then $R$ is called a \emph{Laguerre algebra} over $K$. Here it is
important to emphasise the ground field. Each Laguerre algebra $R$ over $K$ is
a local algebra over any \emph{proper\/} subfield $F$ of $K$. On the other
hand, it is not a Laguerre algebra over $F$, because $F\oplus\rad R$ (direct
sum of additive groups) is a proper subgroup of $(R,+)$.
\end{nrtxt}

\begin{exas}\label{bsp:lokale.ringe}
Here are some examples of local rings and local algebras:
\begin{enumerate}

\item\label{bsp:lokale.ringe.koerper}

A trivial example of a local ring is a field.

\item\label{bsp:lokale.ringe.reelle.dualzahlen}

As has been noted before, the classical example of a local ring is the ring of
\emph{dual numbers}\index{dual numbers}\index{numbers!dual} over the reals.
There are several ways to define it. For example, we may start with the
polynomial ring $\bR[T]$ in the indeterminate $T$, consider the ideal $( T^2)$
which is generated by $T^2$, and define the real dual numbers as the quotient
ring $\bR[T]/(T^2)$. Letting $\eps:=T+(T^2)$ leads to the usual notation of a
dual number in the form
\begin{equation*}
    a+ b \eps \mbox{~~with~~} a,b\in\bR,
    \mbox{~~where~~}\eps\notin \bR, \mbox{~~and~~}\eps^2= 0.
\end{equation*}
This example allows several generalisations which are discussed below.

\item\label{bsp:lokale.ringe.dualzahlen}

In Example (\ref{bsp:lokale.ringe.reelle.dualzahlen}) we may replace $\bR$ with
any commutative field $K$ thus obtaining the ring of \emph{dual numbers\/} over
$K$. Such a ring of dual numbers will be denoted by $K[\eps]$. It is a
two-dimensional Laguerre algebra over $K$ with $\rad K[\eps]=K\eps$.

We may even allow $K$ to be a (non-commutative) field if we require $T$ to be a
\emph{central indeterminate}\index{indeterminate!central}\index{central
indeterminate}. This means that in the polynomial ring $K[T]$ the indeterminate
$T$ commutes with every element of $K$. Even though this ring of dual numbers
is of the form $K\oplus K\eps$, it is not an algebra over $K$, unless $K$ is
commutative.

\item\label{bsp:lokale.ringe.twist}

Let $R=K[\eps]$ be a ring of dual numbers as in
(\ref{bsp:lokale.ringe.dualzahlen}) and let $\sigma\in\Aut(K)$ be an
automorphism of $K$ other than the identity. We keep addition unaltered, but
introduce a new multiplication (denoted by $*$) in $K[\eps]$ as follows:
\begin{equation*}
    (a+b\eps)*(c+d\eps) := ac + (ad+bc^\sigma)\eps
    \mbox{~~for all~~}a,b,c,d\in K.
\end{equation*}
This gives a ring $K[\eps;\sigma]$ of \emph{twisted dual
numbers}\index{numbers!twisted dual}\index{twisted dual numbers} over $K$. It
is a local ring with $K\eps$ the ideal of all non-invertible elements. It
cannot be an algebra over $K$, even if $K$ is commutative, because $K$ is not
in the centre of $K[\eps;\sigma]$.

\item\label{bsp:lokale.ringe.reelle.hoeheredualzahlen}

An immediate generalisation of (\ref{bsp:lokale.ringe.dualzahlen}) is to
consider the factor ring $K[T]/(T^h)$ for some natural number $h\geq 1$. As
before, we put $\eps:=T+(T^h)$, whence this ring is of the form
\begin{equation*}
  K[\eps]:= K\oplus \underbrace{K\eps\oplus K\eps^{2}\oplus \ldots\oplus K\eps^{h-1}}_{=\,\rad K[\eps]}.
\end{equation*}

\item\label{bsp:lokale.ringe.aeussere}

Let $\vV$ be an $n$-dimensional vector space over a commutative field $K$. Then
the \emph{exterior algebra}\index{algebra!exterior}\index{exterior algebra}
\begin{equation}\label{eq:aeussere}
    \bigwedge \vV = \underbrace{\bigwedge\nolimits^0 \vV}_{=\,K} \oplus
    \underbrace{\bigwedge\nolimits^1 \vV}_{=\, \vV}
    \oplus \cdots\oplus
    \bigwedge\nolimits^{n} \vV
\end{equation}
is a Laguerre algebra over $K$ with dimension $2^n$; see, for example,
\cite[7.2]{jac-85}. Multiplication in this algebra is usually denoted by the
wedge sign ($\wedge$). If $(\vb_1,\vb_2,\ldots,\vb_n)$ is a basis of $\vV$ then
the family of vectors
\begin{equation*}
  \vb_{i_1}\wedge\vb_{i_2}\wedge\ldots\wedge\vb_{i_k},
\end{equation*}
where $1\leq i_1<i_2<\cdots<i_k\leq n$ and $k\in\{0,1,\ldots,n\}$ is a
basis of $\bigwedge\vV$. Of course, when $k=0$ the corresponding empty
product is defined to be $1\in \bigwedge\vV$. The product of vectors is
alternating and therefore skew symmetric. Thus we have $\vv\wedge \vv=0$
and $\vv\wedge\vw=-\vw\wedge\vv$ for all $\vv,\vw\in\vV$.

In particular, for $\vV=K$ the exterior algebra $\bigwedge K$ is just the ring
of dual numbers over $K$. Here some care has to be taken, since according to
(\ref{eq:aeussere}) we get two copies of $K$ in $\bigwedge K$, namely
$\bigwedge\nolimits^0 K$ (a copy of the field $K$) and $\bigwedge\nolimits^1 K$
(a copy of the vector space $K$), and they \emph{must not be identified}.

\item\label{bsp:lokale.ringe.Zmod}

Let $\bZ$ be the ring of integers and let $1<q=p^h\in\bZ$ be a power of a
prime $p$. Then $\bZ/(q\bZ) =:\bZ_q$ is a local ring. The ideal $\rad
\bZ_q$ comprises the residue classes (modulo $q$) of all integers $kp$,
where $k\in\bZ$, so it is the zero ideal precisely when $h=1$. The quotient
field $\bZ_q/\rad R$ is the \emph{Galois field}\index{Galois field}
$\bZ_p=\GF(p)$ which carries the name of \textsc{Evariste Galois}
(1811--1832).

If $h>1$ then $\bZ_q$ is not an algebra over any field, because the smallest
positive integer $n$ satisfying $\sum_{i=1}^n 1\equiv 0\pmod q$ is $n=q$.
However, the characteristic of a finite field is a prime, and an infinite field
cannot be a subset of $\bZ_q$.
\end{enumerate}
\end{exas}

\begin{txt}
While for an arbitrary ring it is difficult (or maybe even hopeless) to
describe explicitly the associated projective line, for a local ring this is an
easy task:
\end{txt}

\begin{thm}\label{thm:proj.gerade.lokal}
Let $R$ be a local ring. Then
\begin{equation}\label{eq:proj.gerade.lokal}
    \bP(R)= \{R(x,1)\mid x\in R\}\cup \{R(1,x)\mid x\in R\setminus R^*\}.
\end{equation}
\end{thm}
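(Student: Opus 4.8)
The plan is to prove the two inclusions of (\ref{eq:proj.gerade.lokal}) separately; I abbreviate the set on the right as the right-hand side (RHS). The inclusion RHS $\subseteq\bP(R)$ requires essentially no new work: in \ref{:viele.punkte} it was already shown, via the explicit invertible matrices (\ref{eq:inv-matrizen}), that $R(x,1)\in\bP(R)$ for every $x\in R$ and that $R(1,x)\in\bP(R)$ for every $x\in R$. Restricting the second family to $x\in R\setminus R^*$ merely discards the points $R(1,x)=R(x^{-1},1)$ with $x\in R^*$, which are already captured by the first family. Hence every element of the RHS is a point of $\bP(R)$.

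For the converse inclusion $\bP(R)\subseteq$ RHS, I would start from an arbitrary point, which by our standing convention is represented by an admissible pair $(a,b)$. The crucial observation --- and the only place where locality genuinely enters --- is that \emph{at least one of $a,b$ is a unit}. To see this I would invoke \ref{:unimod}: every admissible pair is unimodular, so there are $x,y\in R$ with $ax+by=1$. Since $R$ is local, $R\setminus R^*=\rad R$ is a two-sided ideal by \ref{:local.ring}. If both $a$ and $b$ lay in $\rad R$, then $ax+by\in\rad R$, forcing $1\in\rad R$, which is absurd because $1\in R^*$. Therefore $a\in R^*$ or $b\in R^*$.

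It then remains to normalise the representative, using only the easy direction of Theorem \ref{thm:LRinvert}, namely that $R(a,b)=R(ua,ub)$ for every unit $u$ (valid since $Ru=R$). If $b\in R^*$, multiplying by $u=b^{-1}$ gives $R(a,b)=R(b^{-1}a,1)$, a point of the first type. If $b\notin R^*$, then by the crucial observation $a\in R^*$, and multiplying by $u=a^{-1}$ gives $R(a,b)=R(1,a^{-1}b)$; moreover $a^{-1}b\in R\setminus R^*$, for otherwise $b=a\,(a^{-1}b)$ would be a product of two units and hence a unit, a contradiction. Thus $R(a,b)$ lies in the RHS in either case, and the proof is complete.

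I expect the only genuinely substantive step to be the observation that one coordinate of an admissible pair is invertible; everything else is bookkeeping with units and left submodules. An alternative route to that same fact would be to reduce modulo $\rad R$: by (\ref{eq:gamma-quer}) the matrix witnessing admissibility remains invertible over the residue field $\overline R$, so its first row $(\overline a,\overline b)$ is nonzero, and (\ref{eq:a-quer}) then yields $a\in R^*$ or $b\in R^*$. I would keep the unimodularity argument as the main line, since it is shorter and avoids passing to $\overline R$.
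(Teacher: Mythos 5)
Your proof is correct, and it reaches the one substantive fact---that at least one coordinate of an admissible pair over a local ring is a unit---by a genuinely different route from the paper. The paper passes to the residue field: using (\ref{eq:gamma-quer}) it shows that $R(a,b)\mapsto \overline R(\overline a,\overline b)$ is a well-defined map $\bP(R)\to\bP(\overline R)$, so that $(\overline a,\overline b)$ is admissible over the field $\overline R=R/\rad R$ and hence nonzero, whence $a\in R^*$ or $b\in R^*$ by (\ref{eq:a-quer}). You instead combine the unimodularity of admissible pairs from \ref{:unimod} with the fact that $\rad R=R\setminus R^*$ is an ideal, so that $ax+by=1$ is impossible with both $a,b\in\rad R$; this is shorter, more self-contained, and avoids Theorem \ref{thm:koerper.dist} entirely. (The alternative you sketch at the end is precisely the paper's argument, so you have anticipated it as a second option.) What the paper's route buys is the map (\ref{eq:P(R)->P(Rquer)}) itself, which is reused later---see (\ref{eq:def-phiquer}) and Theorem \ref{thm:2}---and which explains conceptually why $\bP(R)$ splits into an affine part of size $\card R$ and a remainder of size $\card\rad R$. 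Your concluding normalisation (multiplying by $b^{-1}$ or $a^{-1}$ via the easy direction of Theorem \ref{thm:LRinvert}, and checking $a^{-1}b\notin R^*$) matches the paper and is complete.
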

\begin{proof}
By \ref{:viele.punkte}, the elements of the sets on the right hand side of
(\ref{eq:proj.gerade.lokal}) are points of $\bP(R)$. We infer from
(\ref{eq:gamma-quer}) that the mapping
\begin{equation}\label{eq:P(R)->P(Rquer)}
    \bP(R)\to\bP(\overline R): R(a,b)\mapsto R(\overline a,\overline b)
\end{equation}
is well-defined; moreover, it takes distant points of $\bP(R)$ to distinct
points of the projective line over the field $\overline R$. Cf.\ Theorem
\ref{thm:koerper.dist}. So let $R(a,b)$ be a point of $\bP(R)$. By
(\ref{eq:P(R)->P(Rquer)}), $\overline R(\overline a,\overline b)$ is a point of
$\bP(\overline R)$. Thus either $\overline b\neq\overline 0$, whence $b\in R^*$
and $R(a,b)=R(b^{-1}a,1)$; or $\overline b=\overline 0$, whence $\overline
a\neq \overline 0$, $b\in R\setminus R^*$, $a\in R^*$, and
$R(a,b)=R(1,a^{-1}b)$.
\end{proof}

\begin{cor}\label{cor:lokal.anzahl}
The projective line over a local ring $R$ has cardinality
\begin{equation}\label{eq:lokal.anzahl}
    \card\bP(R)= \card R + \card \rad R.
\end{equation}
\end{cor}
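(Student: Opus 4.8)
The plan is to read off $\bP(R)$ from the explicit description in Theorem~\ref{thm:proj.gerade.lokal} and then simply count, taking care that the two families listed there neither overlap nor contain repetitions. I would set $A:=\{R(x,1)\mid x\in R\}$ and $B:=\{R(1,x)\mid x\in R\setminus R^*\}$, so that $\bP(R)=A\cup B$ by that theorem, and aim to show that this union is disjoint with each family parametrised injectively.

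First I would establish that the two parametrisations are injective. Both $(x,1)$ and $(1,x)$ are admissible, since by (\ref{eq:inv-matrizen}) they occur as first rows of matrices in $\GL_2(R)$; hence Theorem~\ref{thm:LRinvert} applies. If $R(x,1)=R(y,1)$, then $(y,1)=u(x,1)=(ux,u)$ for some $u\in R^*$, which forces $u=1$ and then $x=y$; the same argument shows $R(1,x)=R(1,y)$ implies $x=y$. Thus $\card A=\card R$ and $\card B=\card(R\setminus R^*)$. Because $R$ is local, $R\setminus R^*=\rad R$ (see \ref{:local.ring}), so $\card B=\card\rad R$.

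The only real point to check is that $A$ and $B$ are disjoint, and this is where the local hypothesis enters a second time. Suppose $R(x,1)=R(1,y)$ with $y\in R\setminus R^*$. By Theorem~\ref{thm:LRinvert} there is a $u\in R^*$ with $(1,y)=u(x,1)=(ux,u)$; comparing second coordinates gives $y=u\in R^*$, contradicting $y\notin R^*$. Alternatively, one could argue via the distant relation: every point of $A$ has a unit in its second coordinate and is therefore distant to $\infty=R(1,0)$ by Example~\ref{bsp:matrizen}~(\ref{bsp:matrizen.dreieck}), whereas no point of $B$ is, since its second coordinate lies in $\rad R$. Either way $A\cap B=\emptyset$, and I would conclude $\card\bP(R)=\card A+\card B=\card R+\card\rad R$, which is exactly (\ref{eq:lokal.anzahl}).

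I do not expect a serious obstacle here: the whole argument is a disjoint-union count, and each injectivity and disjointness claim is an immediate application of Theorem~\ref{thm:LRinvert}. The one spot deserving genuine attention is the disjointness of the two families, precisely because it is there that one must invoke $R\setminus R^*=\rad R$ rather than any generic ring property. Over a non-local ring the two lists of Theorem~\ref{thm:proj.gerade.lokal} need not even exhaust $\bP(R)$, so no analogous clean cardinality formula is to be expected.
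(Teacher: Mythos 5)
Your proof is correct and follows the paper's intended route: the corollary is presented as an immediate consequence of Theorem \ref{thm:proj.gerade.lokal} combined with the injectivity and disjointness observations already sketched in \ref{:viele.punkte} (which yielded the lower bound (\ref{eq:mind.P(R)})), and you have simply made that disjoint-union count explicit via Theorem \ref{thm:LRinvert}. One small remark: the disjointness of $\{R(x,1)\mid x\in R\}$ and $\{R(1,y)\mid y\in R\setminus R^*\}$ holds over \emph{any} ring by exactly your argument, so locality does not "enter a second time" there --- it is needed only for the exhaustion of $\bP(R)$ by these two families and for the identification $R\setminus R^*=\rad R$ --- but this mislabelling does not affect the validity of the proof.
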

\begin{txt}
This improves formula (\ref{eq:mind.P(R)}) for local rings. The following
characterisation is essential:
\end{txt}
\begin{thm}\label{thm:lokal.dist}
A ring $R$ is a local ring if, and only if, the relation ``non-distant''
\noslant($\notdis$\noslant) on the projective line $\bP(R)$ is an equivalence
relation.
\end{thm}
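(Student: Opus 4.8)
Since $\dis$ is symmetric and anti-reflexive (see \ref{:distant}), the relation $\notdis$ is automatically reflexive and symmetric, so the entire content of the theorem is the \emph{transitivity} of $\notdis$. Both implications must therefore be read as statements about transitivity, and this is where all the work goes.

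For the implication ``$R$ local $\Rightarrow \notdis$ is an equivalence relation'', the plan is to exhibit $\notdis$ as the fibre relation of a map into a projective line over a \emph{field}. I would use the reduction map $\pi\colon\bP(R)\to\bP(\overline R)$, $R(a,b)\mapsto\overline R(\overline a,\overline b)$ already set up in the proof of Theorem~\ref{thm:proj.gerade.lokal} via (\ref{eq:P(R)->P(Rquer)}); here $\overline R=R/\rad R$ is a field and $\pi$ is well defined by (\ref{eq:gamma-quer}). Combining the matrix criterion (\ref{eq:dist-matrix}), the equivalence (\ref{eq:gamma-quer}) for $m=2$, and Theorem~\ref{thm:koerper.dist} (over a field distinct points are distant), I would establish, for admissible representatives, that $p\dis q\Leftrightarrow\pi(p)\neq\pi(q)$, i.e.\ $p\notdis q\Leftrightarrow\pi(p)=\pi(q)$. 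Thus $\notdis$ is precisely ``having the same image under $\pi$'', which is an equivalence relation, and this direction is finished.

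For the converse I would restrict $\notdis$ to the affine line. Identifying $R$ with $\dis(\infty)$ through the embedding (\ref{eq:einbettung}), one has $a\notdis b\Leftrightarrow a-b\notin R^*$ for $a,b\in R$ (see \ref{:nachbarschaft}). Assuming $\notdis$ transitive, I would apply transitivity to the three affine points $x+y$, $y$, $0$ for arbitrary non-units $x,y\in R\setminus R^*$: from $x+y\notdis y$ and $y\notdis 0$ it follows that $x+y\notdis 0$, that is, $x+y\in R\setminus R^*$. Hence the set of non-units of $R$ is closed under addition.

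The remaining step is the one I expect to be the main obstacle, at least without commutativity: upgrading ``non-units closed under addition'' to the paper's definition of local, namely that $R\setminus R^*$ is a \emph{two-sided ideal}. I would show the absorbing property cannot fail. If $rn\in R^*$ for some $n\in R\setminus R^*$ and $r\in R$, then $n$ is left invertible, so $e:=n(rn)^{-1}r$ is an idempotent with $e\neq 0$ (since $en=n\neq 0$) and $e\neq 1$ (else $n\in R^*$); but then $e$ and $1-e$ annihilate each other, so neither is a unit, while their sum is the unit $1$, contradicting closure under addition. The symmetric argument handles $nr$, so $R\setminus R^*$ absorbs multiplication on both sides and is a two-sided ideal; alternatively one may simply invoke the standard characterisation of local rings in \cite{lam-91}. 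This shows $R$ is local and completes the proof.
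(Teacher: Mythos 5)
Your proof is correct, but both halves run along genuinely different lines from the paper's. For the forward direction the paper argues directly with matrices: after normalising one point to $R(1,0)$ via the $\GL_2(R)$-action, it observes that the second coordinates $b,d$ of the two given points lie in $\rad R$, so every combination $xb+yd$ stays in $\rad R$ and the matrix $\SMat2{a&b\\c&d}$ can never be completed to an invertible one. You instead realise $\notdis$ as the fibre relation of the reduction map (\ref{eq:P(R)->P(Rquer)}) onto $\bP(\overline R)$, combining (\ref{eq:gamma-quer}) with Theorem \ref{thm:koerper.dist}; this is exactly the mechanism the paper deploys only later, in Theorem \ref{thm:2}, for the radical parallelism over an arbitrary ring, so your argument is the one that generalises (and it quietly reproves Corollary \ref{cor:klassengroesse} along the way). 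For the converse, both proofs extract additive closure of $R\setminus R^*$ from transitivity restricted to the affine line --- your triple $x+y,\ y,\ 0$ is the affine shadow of the paper's computation with $R(1,a)$, $R(1,0)$, $R(1,b)$ --- but you then finish with an idempotent argument: if $rn\in R^*$ for a non-unit $n$, the nontrivial idempotent $e=n(rn)^{-1}r$ yields two non-units $e$ and $1-e$ summing to the unit $1$. The paper instead invokes the explicit invertible matrix $\SMat2{a&0\\1-ba&b}$ from Example \ref{bsp:matrizen} (\ref{bsp:matrizen.ab=1}) together with transitivity once more to show that $ab\in R^*$ forces $a,b\in R^*$. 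Your fallback of quoting \cite[Theorem~19.1]{lam-91} is precisely the shortcut the paper itself mentions immediately after its proof. On balance, the paper's route stays entirely inside its own toolbox of matrix identities and uses the geometry of $\bP(R)$ throughout, while yours buys a cleaner conceptual picture of the forward direction and a purely ring-theoretic finish to the converse.
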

\begin{proof}
(a) Over any ring $R$, the relation $\notdis$ on $\bP(R)$ is reflexive and
symmetric, since $\dis$ is anti-reflexive and symmetric according to
\ref{:distant}.

(b) Suppose that $R$ is local. By the action of $\GL_2(R)$, it suffices to show
that $p\notdis R(1,0)$ and $R(1,0)\notdis q$ implies $p\notdis q$ for all
$p,q\in\bP(R)$. With $p=R(a,b)$ and $q=R(c,d)$ we obtain
\begin{equation*}
    \Mat2{1 & 0 \\ a & b}\notin\GL_2(R)\mbox{~~and~~}
    \Mat2{1 & 0 \\ c & d}\notin\GL_2(R).
\end{equation*}
Thus, by Example \ref{bsp:matrizen} (\ref{bsp:matrizen.dreieck}), $b$ and $d$
are in $R\setminus R^*=\rad R$. But then $xb+yd\in\rad R$ for all $x,y\in R$,
whence
\begin{equation*}
    \Mat2{* & * \\ x & y}\Mat2{a & b \\ c & d}\neq \Mat2{* & * \\ * & 1}
    \mbox{~for all~} x,y\in R.
\end{equation*}
This implies $p\notdis q$.

(c) Conversely, let $\notdis$ be an equivalence relation. We have to show that
$J:=R\setminus R^*\neq\emptyset$ is an ideal. Given $a,b\in J$ we infer from
\ref{:nachbarschaft} that $ R(1,a)\notdis R(1,0)\notdis R(1,b)$. So,
transitivity of $\notdis$ yields
\begin{equation*}
   \Mat2{1& a\\1& b} \notin\GL_2(R).
\end{equation*}
From
\begin{equation*}
   \Mat2{1& a\\1& b} =
   \Mat2{1& 0\\1& a-b}\underbrace{\Mat2{1& a\\0& -1}}_{\in\,\GL_2(R)}
   \notin\GL_2(R)
\end{equation*}
we read off that the first matrix on the right hand side is not invertible,
whence $a-b\in J$. Thus $J$ is an additive subgroup of $R$.

Next, we show that $ab=u$, where $a,b\in R$ and $u\in R^*$, implies that $a$
and $b$ are units. It suffices to treat the case $u=1$: By Example
\ref{bsp:matrizen} (\ref{bsp:matrizen.ab=1}), the matrix
\begin{equation*}
    \Mat2{a&0\\1-ba&b}
\end{equation*}
has an inverse. Hence $R(a,0)$ and $R(1-ba,b)$ are points such that
\begin{equation*}
  R(1,0)\notdis R(a,0)\dis R(1-ba,b).
\end{equation*}
As $R(a,0)$ and $R(1-ba,b)$ are in distinct equivalence classes, so are
$R(1,0)$ and $R(1-ba,b)$. Therefore
\begin{equation*}
    \Mat2{1&0\\1-ba&b}\in\GL_2(R).
\end{equation*}
We deduce from Example \ref{bsp:matrizen} (\ref{bsp:matrizen.dreieck}) that $b$
is a unit. Thus, finally, $a=b^{-1}$ is a unit, too.

By the above, a product of two ring elements, with one factor in $J$, cannot be
a unit. Altogether, this means that $J$ is an ideal.
\end{proof}
\begin{nrtxt}\label{def:parallel}
If $R$ is a \emph{local ring\/} then two points $p,q\in\bP(R)$ are said to be
\emph{parallel\/}\index{points!parallel}\index{parallel points}, in symbols
$p\parallel q$, if they are non-distant. By the above this is an equivalence
relation and the equivalence classes of $\bP(R)$ are also called \emph{parallel
classes}. A definition of parallel points on the projective line over an
\emph{arbitrary ring\/} will be given in \ref{:parallel.allg}.

The following result is immediate from the proof of Theorem
\ref{thm:lokal.dist}:
\end{nrtxt}
\begin{cor}\label{cor:klassengroesse}
Let\/ $\bP(R)$ be the projective line over a local ring $R$. Then every
parallel class of\/ $\bP(R)$ has $\card \rad R$ elements.
\end{cor}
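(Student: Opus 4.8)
The plan is to reduce the statement to a single parallel class by exploiting the transitivity of $\GL_2(R)$ on $\bP(R)$. Since the distant relation $\dis$ is defined in \ref{:distant} as an orbit under $\GL_2(R)$, it is $\GL_2(R)$-invariant, and hence so is its negation $\notdis$. Therefore each $\gamma\in\GL_2(R)$ carries parallel classes bijectively onto parallel classes. As $\GL_2(R)$ acts transitively on the points of $\bP(R)$ (see \ref{:nachbarschaft}), it follows that it acts transitively on the set of parallel classes, so all parallel classes share a common cardinality. Consequently it suffices to count the elements of one particular class, and I would take the class of the distinguished point $\infty=R(1,0)$.

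To identify this class explicitly I would invoke the computation already carried out in part (b) of the proof of Theorem \ref{thm:lokal.dist} (equivalently, the description in \ref{:nachbarschaft}): a point $R(a,b)$ satisfies $R(a,b)\dis R(1,0)$ precisely when $b\in R^*$, so $R(a,b)\notdis\infty$ holds exactly when $b\in R\setminus R^*=\rad R$. Combining this with the normal form of Theorem \ref{thm:proj.gerade.lokal}, the points $R(x,1)$ all have their second coordinate equal to the unit $1$ and are therefore distant to $\infty$, whereas the points $R(1,x)$ with $x\in R\setminus R^*$ have their second coordinate in $\rad R$ and are therefore non-distant to $\infty$. Hence the parallel class of $\infty$ is exactly $\{R(1,x)\mid x\in R\setminus R^*\}$.

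It then remains to count these representatives. Each pair $(1,x)$ is admissible by (\ref{eq:inv-matrizen}), so Theorem \ref{thm:LRinvert} applies: an equality $R(1,x)=R(1,y)$ forces $(1,y)=u(1,x)$ for some $u\in R^*$, giving $u=1$ and $y=x$. Thus the map $x\mapsto R(1,x)$ is a bijection from $R\setminus R^*=\rad R$ onto the parallel class of $\infty$, which therefore has $\card\rad R$ elements. By the transitivity argument of the first paragraph, every parallel class then has $\card\rad R$ elements, as claimed.

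The argument is essentially bookkeeping and I do not expect a genuine obstacle, consistent with the remark that the corollary is immediate from the proof of Theorem \ref{thm:lokal.dist}. The only point deserving a moment's care is ruling out coincidences among the representatives $R(1,x)$ for distinct non-units $x$; this is precisely what Theorem \ref{thm:LRinvert} supplies once the admissibility of $(1,x)$ has been noted.
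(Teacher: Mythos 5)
Your argument is correct and is precisely the fleshing-out that the paper has in mind when it calls the corollary ``immediate from the proof of Theorem \ref{thm:lokal.dist}'': transitivity of $\GL_2(R)$ reduces everything to the class of $\infty=R(1,0)$, the proof of that theorem (equivalently \ref{:nachbarschaft} together with Theorem \ref{thm:proj.gerade.lokal}) identifies this class as $\{R(1,x)\mid x\in\rad R\}$, and Theorem \ref{thm:LRinvert} shows these representatives are pairwise distinct. All steps check out, including the admissibility of the pairs $(1,x)$ via (\ref{eq:inv-matrizen}).
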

\begin{txt}
The relations ``$\parallel$'' and ``$=$'' coincide precisely when $R$ is a
field; see Theorem \ref{thm:koerper.dist}. In this case we get the finest
equivalence relation on $\bP(R)$, i.~e., parallel classes are singletons.

Our proof of Theorem \ref{thm:lokal.dist} could be shortened by using the
following characterisation of local rings (see \cite[Theorem~19.1]{lam-91}): A
ring $R$ is local if, and only if, $R\setminus R^*$ is a group under addition.
\end{txt}

\begin{nrtxt}
Suppose that $L$ is a field and that $K\subset L$ is a proper subfield
contained in the centre of $L$. Then the chain geometry $\Sigma(K,L)$ is called
a \emph{M\"{o}bius geometry}\index{Moebius geometry@M\"{o}bius geometry} in honour of
\textsc{August Ferdinand M\"{o}bius} (1790--1868). Two points of $\Sigma(K,L)$ are
distant precisely when they are distinct, since $L$ is a local ring and $\rad
R=\{0\}$. Hence there is a unique chain through any three distinct points.

Observe that the terminology in the literature is varying. We follow
\cite{herz-95} by assuming that $K$ is in the centre of $L$. Some authors drop
this condition and speak of a M\"{o}bius geometry $\Sigma(K,L)$ even if $K$ is just
a proper subfield of $L$. Also the term \emph{geometry of a field
extension\/}\index{geometry of a field extension}\index{field
extension!geometry of a} for such a chain geometry $\Sigma(K,L)$ is being used.
However, because of our emphasis on the finite case, this more general point of
view is irrelevant for our purposes. Cf.\ Theorem \ref{thm:endl.Moebius}.
\end{nrtxt}

\begin{exas}
Here are some examples of M\"{o}bius geometries and their generalisations. The
reader should consult \cite{benz-73} and \cite{blunck+he-05} for further
details.
\begin{enumerate}
\item

The classical example of a M\"{o}bius geometry is based on the fields $\bR$ and
$\bC=\bR\oplus \bR i$ of real and complex numbers. In fact,
$\Sigma(\bR,\bC)$ can be seen as an algebraic model of the geometry of
circles on a Euclidean $2$-sphere. There is a unique chain (circle) through
any three distinct points.

\item

Let $\bH=\bR\oplus \bR i\oplus \bR j\oplus\bR k$ denote the real quaternions.
Then $\Sigma(\bR,\bH)$ is a M\"{o}bius geometry which is isomorphic to the geometry
of circles on the Euclidean $4$-sphere. There is a unique chain (circle)
through any three distinct points.

\item

Another interesting classical example is $\Sigma(\bC,\bH)$, where $\bC$ is
identified with $\bR\oplus\bR i$. It is an algebraic model for the geometry of
$2$-spheres on a Euclidean $4$-sphere. Here there is more than one chain
through three distinct points. It is not a M\"{o}bius geometry according to our
definition, because the centre of the real quaternions is $\bR$.
\end{enumerate}

Now we turn to the finite case. Finite fields are commutative by a famous
theorem due to \textsc{Joseph Henry Mclagan-Wedderburn}\index{Wedderburn
theorem (on finite fields)}\index{theorem!of Wedderburn (on finite fields)}
(1882--1948) for which \textsc{Ernst Witt} (1911--1991) has given an elegant
short proof; cf.\ \cite{aigner+z-04}. Since finite commutative fields are
precisely the well known Galois fields, the finite M\"{o}bius geometries are easily
described.
\end{exas}

\begin{thm}\label{thm:endl.Moebius}
~
\begin{enumerate}

\item

Each finite M\"{o}bius geometry is of the form
$\Sigma\big({\GF(q)},\GF(q^h)\big)$, where $q\geq 2$ is a power of a prime
and $h\geq 2$ is an integer.

\item

Let $q\geq 2$ be a power of a prime and let $h\geq 1$ be an integer. Then
the chain geometry $\Sigma\big({\GF(q)},\GF(q^h)\big)$ is a $3$-design if
its chains are considered as ``blocks''. The parameters of this design are
\begin{equation*}
  v=q^h+1,\; k= q+1,\mbox{ and }\lambda_3=1.
\end{equation*}
\end{enumerate}
\end{thm}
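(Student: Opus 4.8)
The plan is to handle the two parts separately, leaning almost entirely on results already proved.

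For part (a) I would unwind the definition of a M\"{o}bius geometry $\Sigma(K,L)$, in which $L$ is a field and $K$ a proper subfield inside the centre of $L$. Finiteness of $L$ together with Wedderburn's theorem (recalled just above the statement) forces $L$ to be commutative, so the centrality requirement on $K$ is automatic and $L$ is a Galois field, say $L=\GF(p^n)$ for a prime $p$. Invoking the standard description of the subfield lattice of a finite field---the subfields of $\GF(p^n)$ are exactly the $\GF(p^d)$ with $d\mid n$---a proper subfield must be $K=\GF(p^d)$ with $d\mid n$ and $d<n$. Putting $q:=p^d$ and $h:=n/d\geq 2$ then gives $K=\GF(q)$ and $L=\GF(q^h)$, as asserted.

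For part (b) the strategy is to realise $\Sigma(\GF(q),\GF(q^h))$ through Spera's construction and read off the parameters. With $L:=\GF(q^h)$, $K:=\GF(q)$, $G:=\GL_2(L)$, and the standard chain $C_0=\bP(K)$ as base block, I would check the hypotheses of Theorem \ref{thm:spera} for $t=3$. Since $L$ is a field, Theorem \ref{thm:koerper.dist} shows that distinct points of $\bP(L)$ are always distant; hence $\notdis$ is equality, each point class is a singleton (so $s=1$), and the $\notdis$-transversal $3$-subsets are exactly the triples of mutually distant points. Theorem \ref{thm:3fach} provides the needed transitive action of $G$ on those triples, and $C_0$ is a transversal $k$-subset with $k=\card\bP(K)=q+1\geq 3=t$ because $q\geq 2$. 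Spera's theorem then produces a $3$-$(1,q+1,\lambda_3)$-DD whose blocks are precisely the chains $\cC(K,L)$, while Corollary \ref{cor:lokal.anzahl}, applied to the field $L$ as a local ring with $\rad L=\{0\}$, gives $v=\card L+1=q^h+1$.

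It remains to evaluate $\lambda_3$. For this I would appeal to Theorem \ref{thm:normalisator}(\ref{thm:normalisator.b}), according to which there is exactly one chain through any three mutually distant points precisely when $K^*$ is normal in $L^*$. Commutativity of $L$ makes $L^*$ abelian, so $K^*$ is automatically normal and $\lambda_3=1$. Rewriting in the design-theoretic convention, where the value $s=1$ is suppressed, this is the desired $3$-$(q^h+1,q+1,1)$-design. I do not anticipate a real obstacle here: the substance resides in the earlier theorems, and the task is essentially to assemble them and keep the bookkeeping straight. The one place warranting care is the degenerate case $h=1$, where $K=L$, the only chain is all of $\bP(L)$, and $k=q+1=v$; one should confirm that the construction still delivers a (trivial) $3$-design, which it does since $K^*=L^*$ is normal and $C_0=\bP(L)$ has $q+1$ points. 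A second, purely notational caution is that ``$3$-design'' here means a $3$-DD with $s=1$, so that $v$ denotes the number of points rather than a class size.
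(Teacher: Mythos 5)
Your proof is correct and follows essentially the same route as the paper: part (a) is the standard subfield-lattice argument for finite fields, and in part (b) the decisive step---$\lambda_3=1$ via Theorem~\ref{thm:normalisator}~(\ref{thm:normalisator.b}) together with the commutativity of $\GF(q^h)^*$, after reading off $v=q^h+1$ from the local-ring description of the projective line and $k=q+1$ from the standard chain---is exactly the paper's. The only difference is that you formally verify the design axioms by routing through Spera's theorem (with $s=1$ and Theorem~\ref{thm:3fach} supplying the transitivity hypothesis), a step the paper leaves implicit.
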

\begin{proof}
(a) If $K$ is a proper subfield of a finite field $L$ then $K=\GF(q)$, where
$q\geq 2$ is a power of a prime, $L=\GF(q^h)$, and $h\geq 2$ equals the
dimension of $L$ over $K$, as a vector space\footnote{It is worth noting here
that $L=\GF(q^h)$ contains a unique subfield with $q$ elements.}.

(b) We have $\card \bP\big({\GF(q^h)}\big)=q^h+1$ according to
(\ref{eq:proj.gerade.lokal}). By their definition, all chains have $\card
\bP\big({\GF(q)}\big)=q+1$ elements. Since $L$ is commutative, every
multiplicative subgroup of $L^*$ is normal. Thus, by Theorem
\ref{thm:normalisator} (\ref{thm:normalisator.b}) applied to $K^*$ and $L^*$,
there is a unique chain through any three distinct points.
\end{proof}

\begin{txt}
In part (b) of the preceding Theorem we did not exclude the trivial case $h=1$,
even though it does not deserve our attention.
\end{txt}

\begin{nrtxt}\label{:laguerre.geom}
Suppose that $R$ is Laguerre algebra over $K$. Then $\Sigma(K,R)$ is called a
\emph{Laguerre geometry}\index{Laguerre geometry}. If, moreover, $R$ is finite
then the chain geometry $\Sigma(K,R)$ gives rise to a transversal divisible
$3$-design; it will be discussed in detail in Section \ref{sect:laguerre.DDs}.
\end{nrtxt}

\section{Notes and further references}

\begin{nrtxt}
There are several books and surveys on chain geometries and related concepts.
The publications \cite{benz-60}, \cite{benz-73}, \cite{benz+l+s-72},
\cite{benz+m-64}, \cite{benz+s+s-81}, \cite{blunck+he-05}, \cite{havl-07a}, and
\cite{herz-95} together with the references given there, cover these topics
from the very beginning up to the year 2006. Below we restrict our attention to
some recent publications.
\end{nrtxt}

\begin{nrtxt}
Various approaches have been made to axiomatise chain geometries, certain
classes of chain geometries, or structures sharing some properties with a
specific type of chain geometry.

This has lead to concepts like \emph{Benz planes}\index{Benz plane} (see
\cite[Section~5]{del-95}), \emph{weak chain spaces}\index{weak chain
space}\index{chain space!weak}, \emph{chain spaces}\index{chain space},
\emph{contact spaces}\index{contact space} (cf.\ \cite[Section~3]{herz-95},
\cite{oezc+h-99}), and \emph{circle planes}\index{circle plane}
\cite{blunck-01b}. However, in general those structures are much more general
than chain geometries. Nevertheless they can sometimes be described
algebraically in terms of a ring containing a subfield if some extra
assumptions are made. See \cite{blunck-97}, \cite{blunck-01d}, \cite{herz-98},
\cite{herz+k-96}, and \cite{herz+r-95}.

The investigation of \emph{topological circle planes}\index{circle
plane!topological}\index{topological circle plane} is part of the book
\cite{polster+s-01}. It contains a wealth of bibliographical data.
Characterisations of projective groups $\PGL_2(R)$, where $R$ is a ring, are
given in \cite{blunck-97a}, \cite{blunck-02a} and \cite{herz-96a}. See also
\cite[Chapter~6]{blunck+he-05}. Properties of projective lines over ``small''
rings are reviewed in \cite{saniga+p+k+p-07a} and \cite{saniga+p+p-06z}.

\end{nrtxt}

\begin{nrtxt}
On the other hand, it is possible to consider structures being more general
than associative algebras (e.g.\ \emph{alternative
algebras\/}\index{algebra!alternative} or \emph{Jordan systems\/}\index{Jordan
system}) in order to obtain a kind of ``chain geometry''. We refer to
\cite{bert+n-04a}, \cite{bert+n-05a}, \cite{blunck-92b}, \cite{blunck-94},
\cite{blunck-95}, \cite{blunck-96}, \cite{blunck-02a},
\cite[Chapter~3]{blunck+he-05}, \cite{blunck+s-95}, and \cite{herz-96b}.
\end{nrtxt}

\begin{nrtxt}
Other papers related with certain chain geometries are \cite{blunck-00a},
\cite{blunck-03a}, \cite{havl-93b}, \cite{havl-94b}, \cite{havl-97},
\cite{havl+l-03a}, \cite{herz-93a}, and \cite{herz-93b}. Every chain geometry
gives rise to \emph{partial affine spaces\/}\index{partial affine
space}\index{space!partial affine}. Such spaces are investigated in
\cite{herz+m-95}, \cite{meur-96}, and \cite{pamb-96}.
\end{nrtxt}


\chapter{Divisible Designs via $\GL_2$-Actions}\label{chap:DD.GL}

\section{How to choose a base block}

\begin{nrtxt}
Let $R$ be a finite local ring. As before, we write ${\rad R}:=R\setminus R^*$
for its Jacobson radical. According to Theorem \ref{thm:lokal.dist} and by the
definition in \ref{def:parallel}, the relation ``parallel'' ($\parallel$) is an
equivalence relation on the projective line $\bP(R)$. Also, $\GL_2(R)$ is a
group acting on $\bP(R)$. In fact, we are in a position to apply Theorem
\ref{thm:spera}:
\end{nrtxt}

\begin{thm}\label{thm:spera.ring}
Let $R$ be a finite local ring, and let $B_0$ be a $\parallel$-transversal
subset of the projective line $\bP(R)$ with $k\geq 3$ points. Then
\begin{equation*}
    \big(\bP(R),\cB,{\parallel}\big)\mbox{~~with~~} \cB:=B_0^{\GL_2(R)}
\end{equation*}
is a $3$-$(s,k,\lambda_3)$-divisible design with $v=\card R +\card {\rad R}$
points, and $s=\card {\rad R}$.
\end{thm}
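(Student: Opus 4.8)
The plan is to recognise the assertion as a direct instance of Spera's construction, Theorem \ref{thm:spera}, applied with $X:=\bP(R)$, the equivalence relation $\rel:=\parallel$, the group $G:=\GL_2(R)$, and $t:=3$; the base block of the present statement plays the role of $B_0$. Note first that $G=\GL_2(R)$ is a finite group, since $R$ is finite, so Theorem \ref{thm:spera} is applicable in principle. The whole task then reduces to verifying the hypotheses (a), (b), (c) of that theorem, together with the inequalities $t\le k$ and $t\le\card\cS$, and to reading off the parameters $v$ and $s$.

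First I would check (a), the $G$-invariance of $\parallel$. By the definition in \ref{:distant}, the distant relation $\dis$ is the $\GL_2(R)$-orbit of the pair $\big(R(1,0),R(0,1)\big)$, so it is $G$-invariant by construction; hence so is its complement $\notdis$, which for a local ring equals $\parallel$ (see \ref{def:parallel}). Condition (b) is exactly Corollary \ref{cor:klassengroesse}: every parallel class of $\bP(R)$ has $\card\rad R$ elements, which pins down $s=\card\rad R$.

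The essential point is condition (c). Here I would first translate the combinatorial language into geometry: since for a local ring non-parallel means distant, a $\parallel$-transversal subset of $\bP(R)$ is precisely a set of mutually distant points, so a $\parallel$-transversal $3$-subset is a three-element set of mutually distant points. Transitivity of $G$ on these sets is then immediate from Theorem \ref{thm:3fach}: $\GL_2(R)$ acts transitively on triples of mutually distant points, and forgetting the ordering gives transitivity on the corresponding $3$-subsets. It remains to confirm the size conditions required by Theorem \ref{thm:spera}: as $B_0$ is $\parallel$-transversal with $k\geq 3$ points, it meets $k$ distinct parallel classes, whence $\card\cS\geq k\geq 3=t$, and $t\le k$ holds for the same reason.

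With all hypotheses in place, Theorem \ref{thm:spera} yields a $3$-$(s,k,\lambda_3)$-divisible design on $\bP(R)$ with block set $B_0^{\GL_2(R)}$ and $\lambda_3$ given by formula (\ref{eq:DD-act-lambda}). Finally I would read off $v=\card\bP(R)=\card R+\card\rad R$ from Corollary \ref{cor:lokal.anzahl}. I do not anticipate a genuine obstacle, since the real work---the $3$-$\dis$-transitivity of $\GL_2(R)$---is already available in Theorem \ref{thm:3fach}; the only things to watch are the identification of $\parallel$-transversal sets with sets of mutually distant points and the verification that there are at least three parallel classes.
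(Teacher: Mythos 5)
Your proposal is correct and follows essentially the same route as the paper: both reduce the statement to Theorem \ref{thm:spera} with $X=\bP(R)$, $\rel=\parallel$, $G=\GL_2(R)$, $t=3$, using Corollary \ref{cor:lokal.anzahl} for $v$, Corollary \ref{cor:klassengroesse} for $s$, and the $\GL_2(R)$-invariance of $\dis$ (hence of $\parallel=\notdis$) for hypothesis (a). You are in fact slightly more explicit than the paper, which leaves the verification of hypothesis (c) --- transitivity on $\parallel$-transversal $3$-subsets via Theorem \ref{thm:3fach} --- and the inequalities $t\le k$, $t\le\card\cS$ to the reader.
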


\begin{proof}
By Corollary \ref{cor:lokal.anzahl}, the projective line over $R$ has finite
cardinality $\card R + \card {\rad R}$. It was shown in Corollary
\ref{cor:klassengroesse} that all parallel-classes have $\card {\rad R}$
elements. According to its definition, the relation $\dis$ is a
$\GL_2(R)$-invariant notion. Recall that, by the definition in
\ref{def:parallel}, the relations $\parallel$ and $\notdis$ coincide for a
local ring. Therefore, also the equivalence relation $\parallel$ is
$\GL_2(R)$-invariant. Hence the assertion follows from Theorem \ref{thm:spera}.
\end{proof}

\begin{nrtxt}
While Theorem \ref{thm:spera.ring} shows that we can construct a wealth of DDs
from the projective line over a finite local ring, one essential problem
remains open:
\begin{center}
  \emph{What is the number of blocks containing a\/ $\parallel$-transversal $3$-set\/}?
\end{center}
Or, said differently:
\begin{center}
\emph{What is the value of the parameter $\lambda_3$}?
\end{center}
We read off from (\ref{eq:DD-act-lambda}) that to answer this question amounts
to finding two non-negative integers: Firstly, $\card \GL_2(R)$ and, secondly,
the cardinality of the setwise stabiliser of the base block $B_0$ under the
action of the general linear group $\GL_2(R)$. It is easy to determine the
order of the group $\GL_2(R)$; see the exercise below. However, it seems
impossible to state any result about the size of setwise stabiliser of $B_0$
without any further information concerning $B_0$.
\end{nrtxt}

\begin{exer}
Show that
\begin{equation}\label{eq:GL.GF}
  {\card\GL_2}\big({\GF(q)}\big)=(q^2-1)(q^2-q).
\end{equation}
Given a finite local ring $R$ with $R/\rad R\cong\GF(q)$ verify that
\begin{equation}\label{eq:GL.Ring}
  \card\GL_2(R)=(\card\rad R)^4 (q^2-1)(q^2-q).
\end{equation}
\end{exer}

\begin{nrtxt}
If $R$ is a finite local ring, but \emph{not\/} a local algebra (e.g.
$R=\bZ_4$), then the divisible designs which arise from $\bP(R)$ seem to be
unknown. We therefore have to exclude them from our discussion in the next
section.

It would be interesting learn more about the DDs which are based upon the
projective line over such a ring, for example the projective line over a
\emph{Galois ring}\index{Galois ring} \cite{wan-03a}. However, it seems to the
author as if there would not exist a ``natural'' choice for a base block.
\end{nrtxt}

\section{Transversal divisible designs from Laguerre algebras}\label{sect:laguerre.DDs}

\begin{nrtxt}
In applying Theorem \ref{thm:spera.ring}, we start with the easiest case, viz.\
the $3$-divisible designs defined by Laguerre geometries. Recall that for a
field $K$ which is contained in a ring $R$, as a subring, we write $\cC(K,R)$
for the set of $K$-chains of the projective line $\bP(R)$.

\end{nrtxt}

\begin{thm}\label{thm:dd.laguerre}
Let $R$ be an $h$-dimensional Laguerre algebra over $\GF(q)$, $1\leq h<\infty$.
Then
\begin{equation*}
    \big(\bP(R),\cC(\GF(q),R),{\parallel}\big)
\end{equation*}
is a transversal $3$-$(s,k,1)$-divisible design with $v=q^h+q^{h-1}$ points,
$s=q^{h-1}$, and $k=q+1$.
\end{thm}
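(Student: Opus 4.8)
The plan is to realise the triple $\big(\bP(R),\cC(\GF(q),R),{\parallel}\big)$ as an instance of Theorem \ref{thm:spera.ring}, taking as base block the standard chain $C_0=\bP(\GF(q))$ embedded in $\bP(R)$ via (\ref{eq:einbett}). A Laguerre algebra over $\GF(q)$ is in particular a local ring, and it is finite since $\card R=q^h<\infty$; so Theorem \ref{thm:spera.ring} applies as soon as I verify that $C_0$ is a $\parallel$-transversal subset with $k\geq 3$ points. Its cardinality is $k=\card\bP(\GF(q))=q+1\geq 3$, and by Lemma \ref{lem:einbett} any two distinct points of $C_0$ are distant, hence non-parallel; thus $C_0$ meets every parallel class at most once and is indeed $\parallel$-transversal. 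Because $\cC(\GF(q),R)=C_0^{\GL_2(R)}$ holds by definition, the block set produced by Theorem \ref{thm:spera.ring} is exactly the set of chains, so the triple is a $3$-$(s,k,\lambda_3)$-divisible design.

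Next I would read off the numerical parameters. Since $R$ is $h$-dimensional over $\GF(q)$ we have $\card R=q^h$, and the defining condition $(R,+)=K\oplus\rad R$ of a Laguerre algebra forces $\dim_K\rad R=h-1$, whence $\card\rad R=q^{h-1}$. Corollary \ref{cor:lokal.anzahl} then gives $v=\card R+\card\rad R=q^h+q^{h-1}$, while Theorem \ref{thm:spera.ring} (or Corollary \ref{cor:klassengroesse}) yields $s=\card\rad R=q^{h-1}$, and $k=q+1$ as computed above.

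It remains to pin down $\lambda_3=1$ and transversality, and this is where the algebra hypothesis does the real work. By the definition of parallelism in \ref{def:parallel} (for a local ring, parallel means non-distant), a $\parallel$-transversal $3$-subset of $\bP(R)$ is precisely a set of three mutually distant points. Since $R$ is a $K$-algebra, $K^*$ lies in the centre of $R^*$, so its normaliser $N$ equals $R^*$ (Example \ref{bsp:1kette} (\ref{bsp:1kette.algebra})); Theorem \ref{thm:normalisator} (\ref{thm:normalisator.b}) then guarantees a unique chain through any three mutually distant points, so $\lambda_3=1$. Finally $v/s=(q^h+q^{h-1})/q^{h-1}=q+1=k$, so equality holds in (\ref{eq:card-Block}); by the criterion in \ref{:transversal.DD} the design is transversal, and indeed each chain (having $q+1$ points) meets each of the $q+1$ point classes exactly once.

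I expect no serious obstacle here: once the correct base block is identified, the argument is mostly bookkeeping that matches the abstract parameters $v,s,k,\lambda_3$ of Theorem \ref{thm:spera.ring} to the chain-geometric data. The only genuinely non-formal ingredient is the uniqueness of the chain through three mutually distant points, which fails for a general finite local ring and relies crucially on $R$ being an algebra over its residue field, so that $K^*$ is central and hence normal in $R^*$.
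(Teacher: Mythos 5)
Your proposal is correct and follows essentially the same route as the paper: both apply Theorem \ref{thm:spera.ring} with the standard chain $C_0$ as base block, read off $v$ and $s$ from $\card R=q^h$ and $\card\rad R=q^{h-1}$, obtain transversality from $k=q+1=v/s$, and derive $\lambda_3=1$ from the centrality of $\GF(q)$ via Example \ref{bsp:1kette} (\ref{bsp:1kette.algebra}) and Theorem \ref{thm:normalisator} (\ref{thm:normalisator.b}). You merely spell out a few verifications (that $C_0$ is $\parallel$-transversal, the dimension count for $\rad R$) that the paper leaves implicit.
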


\begin{proof}
The assertions on $v$ and $s$ follow immediately from Theorem
\ref{thm:spera.ring}, $\card R=q^h$, and $\card\rad R=q^{h-1}$. Also, we have
$k=\card\bP\big({\GF(q)}\big)=q+1=\frac{v}{s}$. Finally, since $\GF(q)$ is in
the centre of $R$, we obtain $\lambda_3=1$ by Example \ref{bsp:1kette}
(\ref{bsp:1kette.algebra}).
\end{proof}

\begin{txt}
As an immediate consequence we can show that there exist a lot of mutually
non-isomorphic transversal divisible designs:
\end{txt}

\begin{thm}\label{thm:dd.laguerre.exist}
Let $q\geq 2$ be a power of a prime and let $h\geq 1$ be a natural number. Then
there is at least one $h$-dimensional Laguerre algebra over $\GF(q)$. Therefore
at least one transversal $3$-$(s,k,1)$-DD with parameters as in Theorem
\noslant{\ref{thm:dd.laguerre}} exists.
\end{thm}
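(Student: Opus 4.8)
The plan is to produce the required Laguerre algebra by an explicit construction and then to read off the divisible design directly from Theorem~\ref{thm:dd.laguerre}; the second assertion is a pure consequence of the first, so essentially all of the work lies on the algebraic side, and even there it is very light.

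First I would take the ring of higher dual numbers
\[
   R:=\GF(q)[T]/(T^h),
\]
which was already introduced in Example~\ref{bsp:lokale.ringe}~(\ref{bsp:lokale.ringe.reelle.hoeheredualzahlen}). Putting $\eps:=T+(T^h)$, the family $(1,\eps,\ldots,\eps^{h-1})$ is a $\GF(q)$-basis of $R$, so $R$ is an $h$-dimensional vector space over $K:=\GF(q)$ and, in particular, a finite ring with $q^h$ elements.

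Next I would verify that $R$ is a Laguerre algebra over $K$. Since $\GF(q)$ is commutative it lies in the centre of $R$, so $R$ is a $K$-algebra. Because $\eps$ is nilpotent ($\eps^h=0$), an element $\sum_{i=0}^{h-1}a_i\eps^i$ is invertible if, and only if, $a_0\neq 0$; hence the set of non-units is the ideal $K\eps\oplus K\eps^2\oplus\cdots\oplus K\eps^{h-1}$, which shows that $R$ is local with $\rad R=K\eps\oplus\cdots\oplus K\eps^{h-1}$ and $\overline R\cong K$. Finally $(R,+)=K\oplus\rad R$ by the very decomposition displayed in the quoted Example, so $R$ is indeed an $h$-dimensional Laguerre algebra over $K$. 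This settles the first assertion, including the degenerate case $h=1$, where $R\cong\GF(q)$ is a field with $\rad R=\{0\}$.

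For the second assertion I would simply apply Theorem~\ref{thm:dd.laguerre} to this $R$: it yields a transversal $3$-$(s,k,1)$-DD with $v=q^h+q^{h-1}$, $s=q^{h-1}$, and $k=q+1$, exactly as claimed. The only point requiring any care is the verification that $R$ is local with radical precisely the augmentation ideal, and this reduces entirely to the nilpotency of $\eps$; I do not expect any genuine obstacle. One may note in passing that varying $q$ and $h$ forces pairwise distinct parameters $(s,k)$, whence the resulting designs are pairwise non-isomorphic, which is the informal remark preceding the theorem.
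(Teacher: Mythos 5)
Your construction is exactly the one the paper uses: its proof simply invokes Example \ref{bsp:lokale.ringe} (\ref{bsp:lokale.ringe.reelle.hoeheredualzahlen}) with $K:=\GF(q)$, i.e.\ the ring $\GF(q)[T]/(T^h)$ of higher dual numbers, and then applies Theorem \ref{thm:dd.laguerre}. Your write-up just makes explicit the verification (locality via nilpotency of $\eps$, the decomposition $K\oplus\rad R$) that the paper leaves to the cited example, so it is correct and essentially identical in approach.
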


\begin{proof}
The assertion follows from Example \ref{bsp:lokale.ringe}
(\ref{bsp:lokale.ringe.reelle.hoeheredualzahlen}), by letting $K:=\GF(q)$.
\end{proof}

\begin{exer}
Determine the parameters $\lambda_2$, $\lambda_1$, and $\lambda_0$ (the number
of chains) of the DDs from Theorem \ref{thm:dd.laguerre}.
\end{exer}

\section{Divisible designs from local algebras}

\begin{nrtxt}
We shall frequently make use of the following result from algebra. It is known
as the \emph{Wedderburn principal theorem}\index{Wedderburn principal
theorem}\index{theorem!Wedderburn principal}:
\end{nrtxt}

\begin{thm}\label{thm:wedderburn}
Let $R$ be a finite local algebra over $K=\GF(q)$. Then there is a
$\GF(q)$-subalgebra $L$ of\/ $R$ which is isomorphic to the field $R/\rad R$
such that $R=\rad R\oplus L$.
\end{thm}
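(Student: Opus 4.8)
The plan is to prove this special case of the Wedderburn principal theorem by explicitly lifting a generator of the residue field $\overline R:=R/\rad R$ to a subfield of $R$. Write $\pi\colon R\to\overline R$ for the canonical epimorphism $a\mapsto\overline a$. Since $R$ is a finite local algebra over $K=\GF(q)$, the quotient $\overline R$ is a finite field, and $\pi$ restricted to the subring $K$ is injective (it fixes $1$), so $K$ embeds into $\overline R$; hence $\overline R=\GF(q^m)$ for some $m\geq 1$. As every extension of a finite field is separable, $\overline R$ is generated over $K$ by a single element: I would choose $\overline\theta$ with $\overline R=K(\overline\theta)$ and let $f\in K[X]$ be its minimal polynomial, irreducible and separable of degree $m$. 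The goal is then to produce $\theta\in R$ with $\pi(\theta)=\overline\theta$ and $f(\theta)=0$.

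First I would fix an arbitrary preimage $\theta_0\in R$ of $\overline\theta$ and run a Hensel-style Newton iteration. Because $R$ need not be commutative, I would carry out all computations inside the commutative, finite-dimensional $K$-subalgebra $A:=K[\theta_0]$ generated by $\theta_0$ (recall $K$ is central, so any two polynomials in $\theta_0$ commute). Separability gives $f'(\overline\theta)\neq 0$, so $\pi\big(f'(\theta_0)\big)=f'(\overline\theta)$ is a unit of $\overline R$; by (\ref{eq:a-quer}) the element $f'(\theta_0)$ is a unit of $R$, and since $A$ is finite and commutative it is a unit of $A$. Setting $\theta_{j+1}:=\theta_j-f(\theta_j)f'(\theta_j)^{-1}$, all the $\theta_j$ remain congruent to $\theta_0$ modulo $\rad R$, so each $f'(\theta_j)$ stays a unit. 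The Taylor expansion of $f$ inside the commutative ring $A$ then shows, by induction, that $f(\theta_j)\in(\rad R)^{2^j}$: the linear term cancels $f(\theta_j)$ by construction, and the remaining terms are of order at least two in the correction $\theta_{j+1}-\theta_j\in(\rad R)^{2^j}$. Since $R$ is finite, $\rad R$ is nilpotent (see \ref{:jacobson}), say $(\rad R)^n=0$; hence for $2^j\geq n$ the iteration stabilises at an element $\theta:=\theta_j$ with $f(\theta)=0$ and $\pi(\theta)=\overline\theta$.

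With such a $\theta$ in hand I would set $L:=K[\theta]$. Since $f(\theta)=0$, the evaluation homomorphism $K[X]\to R$, $X\mapsto\theta$, factors through the field $K[X]/(f)\cong\GF(q^m)$, so $L$ is a homomorphic image of a field; the induced map $\GF(q^m)\to L$ has an ideal as kernel and sends $1$ to $1\neq 0$, hence is an isomorphism onto $L$. Thus $L$ is a $\GF(q)$-subalgebra of $R$ isomorphic to $\overline R$, so $\dim_K L=m$. Composing with $\pi$ yields a $K$-homomorphism $\pi|_L\colon L\to\overline R$ carrying $\theta\mapsto\overline\theta$; as $\overline\theta$ generates $\overline R$ this map is surjective, and a surjection between $K$-spaces of equal finite dimension $m$ is bijective. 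Finally $L\cap\rad R=\ker(\pi|_L)=\{0\}$, while surjectivity of $\pi|_L$ gives $R=L+\rad R$; therefore $R=\rad R\oplus L$, as required.

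The main obstacle is the lifting step: making the Newton iteration go through even when $R$ is non-commutative (handled by confining every computation to $A=K[\theta_0]$) and verifying the quadratic gain $f(\theta_j)\in(\rad R)^{2^j}$ that forces termination through the nilpotency of $\rad R$. Once $\theta$ has been found, the passage to $L$ and the direct-sum decomposition are routine.
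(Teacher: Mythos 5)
Your proof is correct, but there is nothing in the paper to compare it against step by step: the text gives no proof of this theorem at all and simply refers the reader to \cite[Theorem~VIII.28]{mcd-74}. What you supply is a complete, self-contained argument along the standard lines for finite local rings: since $\overline R=R/\rad R$ is a finite field, the extension $\overline R/K$ is automatically separable, so a primitive element $\overline\theta$ has an irreducible minimal polynomial $f$ with $f'(\overline\theta)\neq 0$, and the Newton--Hensel iteration terminates because $\rad R$ is nilpotent (as noted in \ref{:jacobson}). The two points that need care in the possibly non-commutative setting are both handled properly: you confine the Taylor expansion to the commutative subalgebra $K[\theta_0]$, which is legitimate because $K$ is central, and you justify that $f'(\theta_j)^{-1}$ lies in that subalgebra, since a unit of $R$ belonging to a finite commutative subring cannot be a zero divisor and is therefore invertible there. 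The integral form of the Taylor expansion, $f(x+h)=f(x)+f'(x)h+h^2g(x,h)$ with $g$ a polynomial over the base ring, avoids any division by factorials in characteristic $p$, so the quadratic gain $f(\theta_j)\in(\rad R)^{2^j}$ goes through, and the final passage from $f(\theta)=0$ to $L:=K[\theta]\cong K[X]/(f)\cong\overline R$ and to $R=\rad R\oplus L$ is routine and correctly executed. In short, your argument is a valid replacement for the external citation, and it is essentially the proof behind the referenced result, specialised to the finite case where separability of the residue field comes for free.
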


\begin{txt}
We refer to \cite[Theorem~VIII.28]{mcd-74} for a proof.
\end{txt}

\begin{nrtxt}
Given a finite-dimensional local algebra $R$ over $K=\GF(q)$ we have the
associated field $R/\rad R = \overline R$. The canonical epimorphism
$R\to\overline R$ takes $K$ to an isomorphic field which is a subring of
$\overline R$. So we obtain that
\begin{equation*}\label{}
 \overline R \cong \GF(q^m)\mbox{ for some natural number }m\geq 1.
\end{equation*}
This implies
\begin{equation*}
     \dim_K R = m + \dim_K(\rad R).
\end{equation*}
By the above and Theorem \ref{thm:wedderburn}, there is a field $L$ which is
isomorphic to $\overline R\cong\GF(q^m)$ such that $K\subset L\subset R$,
whence $R$ is a left vector space over $L$. We let
\begin{equation*}\label{}
    h:=\dim_L R\geq 1.
\end{equation*}
Hence
\begin{equation}\label{eq:teiler}
    \dim_K R=(\dim_L R)(\dim_K L)=hm
\end{equation}
and
\begin{equation}\label{eq:radikaldim}
    \dim_K (\rad R)=(h-1)m.
\end{equation}

The next theorem is taken from \cite[Example~2.5]{spera-92a}. It is a
generalisation of Theorem \ref{thm:dd.laguerre} which, of course, is included
as a particular case for $m=1$.
\end{nrtxt}

\begin{thm}\label{thm:dd.lokal}
Let $R$ be an finite-dimensional local algebra over $K=\GF(q)$, with $R / \rad
R\cong \GF(q^m)$, whence $\dim_K R= hm$ for some positive integer $h$. Then
\begin{equation*}
    \big(\bP(R),\cC(\GF(q),R),{\parallel}\big)
\end{equation*}
is a $3$-$(s,k,1)$-divisible design with $v=q^{hm}+q^{(h-1)m}$ points,
$s=q^{(h-1)m}$ and $k=q+1$.
\end{thm}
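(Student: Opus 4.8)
The plan is to obtain the statement as a direct application of Spera's construction in the form of Theorem~\ref{thm:spera.ring}, following the template of the special case $m=1$ treated in Theorem~\ref{thm:dd.laguerre}; the only genuine change lies in the cardinality bookkeeping. First I would record that $R$, being finite-dimensional over the finite field $\GF(q)$, is itself a finite local ring, so that Theorem~\ref{thm:spera.ring} is available. As base block I would take the standard chain $C_0$, that is, the image of $\bP(\GF(q))$ under the embedding (\ref{eq:einbett}). By Lemma~\ref{lem:einbett} distinct points of $C_0$ are distant and hence lie in distinct parallel classes, so $C_0$ is $\parallel$-transversal; furthermore $\card C_0=\card\bP(\GF(q))=q+1=:k$, and $k\geq 3$ because $q\geq 2$. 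With $B_0:=C_0$, the orbit $C_0^{\GL_2(R)}$ is by definition the chain set $\cC(\GF(q),R)$, so Theorem~\ref{thm:spera.ring} already tells us that $(\bP(R),\cC(\GF(q),R),\parallel)$ is a $3$-$(s,k,\lambda_3)$-DD.

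Next I would read off the numerical parameters. Formulas (\ref{eq:teiler}) and (\ref{eq:radikaldim}) give $\dim_K R=hm$ and $\dim_K(\rad R)=(h-1)m$, so that $\card R=q^{hm}$ and $\card\rad R=q^{(h-1)m}$. Inserting these into the conclusion of Theorem~\ref{thm:spera.ring} yields $v=\card R+\card\rad R=q^{hm}+q^{(h-1)m}$, $s=\card\rad R=q^{(h-1)m}$, and $k=q+1$, exactly as asserted.

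It remains to justify $\lambda_3=1$. The key observation is that a $\parallel$-transversal $3$-subset of $\bP(R)$ is the same thing as a triple of mutually distant points, and that the blocks containing such a triple are precisely the chains through it. Since $R$ is a $\GF(q)$-algebra, $\GF(q)$ lies in the centre of $R$, so $\GF(q)^{*}\subseteq\Z(R)$ is a normal subgroup of $R^{*}$; by Example~\ref{bsp:1kette}~(\ref{bsp:1kette.algebra}) (equivalently, Theorem~\ref{thm:normalisator}~(\ref{thm:normalisator.b})) there is then exactly one chain through any three mutually distant points. Hence every $\parallel$-transversal $3$-set lies in a unique block, i.e.\ $\lambda_3=1$.

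I anticipate no substantial obstacle, since the argument is line-for-line the one used for $m=1$, with $h-1$ and $h$ replaced by $(h-1)m$ and $hm$. The one point demanding care is that, in contrast to Theorem~\ref{thm:dd.laguerre}, the design should \emph{not} be called transversal when $m\geq 2$: here $v/s=q^{m}+1$ strictly exceeds $k=q+1$, so equality in (\ref{eq:card-Block}) fails and the DD is regular.
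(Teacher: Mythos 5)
Your proposal is correct and follows essentially the same route as the paper: the paper's proof simply says to repeat the argument for the Laguerre case (Theorem~\ref{thm:dd.laguerre}), i.e.\ apply Theorem~\ref{thm:spera.ring} with the standard chain as base block, substitute $\card R=q^{hm}$ and $\card\rad R=q^{(h-1)m}$ from (\ref{eq:radikaldim}), and obtain $\lambda_3=1$ from Example~\ref{bsp:1kette}~(\ref{bsp:1kette.algebra}) since $\GF(q)$ is central. Your closing observation that the design is transversal precisely when $m=1$ (since $v/s=q^m+1$ versus $k=q+1$) is also accurate.
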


\begin{proof}
It suffices to repeat the proof of Theorem \ref{thm:dd.laguerre}, taking into
account that now $\card\rad R=q^{(h-1)m}$ by virtue of (\ref{eq:radikaldim}).
\end{proof}
\begin{txt}
Next, we apply this result to construct DDs:
\end{txt}

\begin{thm}\label{thm:dd.lokal.exist}
Let $q\geq 2$ be a power of a prime. Also, let $h$ and $m$ be a positive
integers. Then there is at least one $hm$-dimensional local algebra $R$ over
$\GF(q)$ with $R / \rad R\cong \GF(q^m)$. Therefore at least one
$3$-$(s,k,1)$-divisible design with parameters as in Theorem
\noslant{\ref{thm:dd.lokal}} exists.
\end{thm}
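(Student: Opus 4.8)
The plan is to exhibit a concrete algebra of the required type rather than to argue abstractly; the natural candidate is the ring of higher dual numbers built over the extension field $\GF(q^m)$. First I would set $L := \GF(q^m)$, which exists and contains $\GF(q)$ as a subfield of degree $m$, and then form
\begin{equation*}
  R := L[T]/(T^h),
\end{equation*}
writing $\eps := T + (T^h)$ exactly as in Example \ref{bsp:lokale.ringe} (\ref{bsp:lokale.ringe.reelle.hoeheredualzahlen}), but now with $L$ playing the role of the ground field. By that example (applied verbatim with $L$ in place of $K$) the ring $R = L\oplus L\eps\oplus\cdots\oplus L\eps^{h-1}$ is a local ring whose radical is $\rad R = L\eps\oplus\cdots\oplus L\eps^{h-1}$ and which satisfies $R/\rad R\cong L$.

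Next I would confirm that $R$ meets every hypothesis of Theorem \ref{thm:dd.lokal}. Since $L$ is commutative the ring $R$ is commutative, so the subfield $\GF(q)\subseteq L\subseteq R$ lies in the centre of $R$; thus $R$ is a (local) algebra over $\GF(q)$. A dimension count over $\GF(q)$ then gives
\begin{equation*}
  \dim_{\GF(q)} R = (\dim_L R)(\dim_{\GF(q)} L) = h\cdot m = hm,
\end{equation*}
and by construction $R/\rad R\cong L\cong\GF(q^m)$. This already establishes the first assertion: $R$ is an $hm$-dimensional local $\GF(q)$-algebra with residue field $\GF(q^m)$. The second assertion is then immediate, since feeding this particular $R$ into Theorem \ref{thm:dd.lokal} yields a $3$-$(s,k,1)$-divisible design with precisely the stated parameters.

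I do not expect a genuine obstacle here; the argument is elementary and runs parallel to the proof of Theorem \ref{thm:dd.laguerre.exist}, the sole change being that the dual-number construction is carried out over $\GF(q^m)$ rather than over $\GF(q)$. The one point that repays a moment's attention is the claim that $R$ is local, i.e.\ that its non-units form an ideal: an element $a_0 + a_1\eps + \cdots + a_{h-1}\eps^{h-1}$ is invertible precisely when $a_0\neq 0$, so the non-units are exactly those with vanishing constant term, and these constitute the ideal $\rad R$. With this observation in hand the verification is routine, and I would simply cite Example \ref{bsp:lokale.ringe} (\ref{bsp:lokale.ringe.reelle.hoeheredualzahlen}) to dispatch it.
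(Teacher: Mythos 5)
Your proposal is correct and follows essentially the same route as the paper: the paper invokes Theorem \ref{thm:dd.laguerre.exist} to obtain an $h$-dimensional Laguerre algebra over $\GF(q^m)$ — which, unwinding that theorem's proof, is exactly your $\GF(q^m)[T]/(T^h)$ — and then regards it as an $hm$-dimensional local algebra over $\GF(q)\subset\GF(q^m)$. Your version merely makes the construction and the dimension count explicit rather than citing the earlier existence result.
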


\begin{proof}
We infer from Theorem \ref{thm:dd.laguerre.exist} that there is an
$h$-dimensional Laguerre algebra $R$ over $\GF(q^m)$. Therefore $R/\rad R$ is
isomorphic to $\GF(q^m)$. This $R$ is an $hm$-dimensional local algebra over
$\GF(q)\subset \GF(q^m)$.
\end{proof}
\begin{txt}
Observe that for $h>1$ non-transversal DDs are obtained in this way.
\end{txt}

\begin{nrtxt}
By the definition of an (arbitrary) chain geometry $\Sigma(K,R)$, the group
$\GL_2(R)$ acts on $\bP(R)$ as a group of automorphisms of $\Sigma(K,R)$ or,
said differently, of the corresponding divisible design. Recall that
$\PGL_2(R)$ denotes the transformation group on $\bP(R)$ which is induced by
$\GL_2(R)$. However, in general this group is only a subgroup of the full
automorphism group.

We shall describe below the full automorphism group of certain chain geometries
and hence of the corresponding DDs. In order to do so we need the following
concept carrying the name of the German physicist \textsc{Pascual Jordan}
(1902--1980), who should not be confused with the French mathematician
\textsc{Camille Jordan} (1839--1922).
\end{nrtxt}

\begin{nrtxt}
Let $R$ and $R'$ be rings. A mapping $\sigma:R\to R'$ is called \emph{Jordan
homomorphism\/}\index{Jordan homomorphism}\index{rings!Jordan homomorphism of}
if
\begin{equation}\label{def:jordan}
  (a+b)^\sigma = a^\sigma + b^\sigma,\;\;
  1^\sigma = 1\;(\in R'),\;\;
  (aba)^\sigma = a^\sigma b^\sigma a^\sigma\;\;\;
  \mbox{for all } a,b\in R.
\end{equation}
See, among others, \cite[p.~2]{jac-68} or \cite[p.~832]{herz-95}. For such a
mapping $\sigma$ and any element $a\in R^*$ the equation
\begin{equation}\label{}
   1^\sigma=(aa^{-2}a)^\sigma = a^\sigma (a^{-2})^\sigma a^\sigma
\end{equation}
shows that $a^\sigma$ has a left and a right inverse, whence $a^\sigma$ is a
unit in ${R'}$. Also,
\begin{equation}\label{}
     a^\sigma=(aa^{-1}a)^\sigma=a^\sigma (a^{-1})^\sigma a^\sigma
\end{equation}
implies
\begin{equation}\label{}
      (a^{-1})^\sigma = (a^\sigma)^{-1} \mbox{ for all }a\in R^*.
\end{equation}
As usual, a bijective Jordan homomorphism is called a \emph{Jordan
isomorphism}\index{Jordan isomorphism of rings}\index{rings!Jordan isomorphism
of}; its inverse mapping is also a Jordan isomorphism.
\end{nrtxt}

\begin{nrtxt}
Let $\sigma:R\to R'$ be a mapping. If $\sigma$ is a homomorphism of rings then
it is also a Jordan homomorphism. This remains true if $\sigma:R\to R'$ is an
\emph{antihomomorphism}\index{antihomomorphism of
rings}\index{rings!antihomomorhism of}; this means that $\sigma$ is a
homomorphism of the additive groups, sends $1\in R$ to $1\in R'$, whereas
$(ab)^\sigma=b^\sigma a^\sigma$ for all $a,b\in R$. Of course, this
antihomomorphism $\sigma$ is at the same time a homomorphism if $R^\sigma$ is a
commutative subring of $R'$.

Let $\sigma:R\to R'$ be a Jordan homomorphism of rings. If $R$ and $R'$ are
commutative and if $1+1\in R^*$ then $\sigma$ is a homomorphism. If $R'$ has no
left or right zero divisors then $\sigma$ is a homomorphism or an
antihomomorphism. See, among others, \cite{bart+b-85}, \cite{herstein-56}, and
\cite[p.~114]{jac-85}. Thus under certain circumstances there will be no
\emph{proper\/}\index{proper Jordan homomorphism}\index{Jordan
homomorphism!proper} Jordan homomorphisms for two given rings, i.~e.\ Jordan
homomorphisms that are neither a homomorphism nor an antihomomorphism.
\end{nrtxt}

\begin{exas} We present some Jordan homomorphisms other than homomorphisms.
\begin{enumerate} \item A well known example of an antiautomorphism (a
bijective antihomomorphism of a ring onto itself) is as follows: Let $R$
commutative ring (or even a commutative field) and let $R^{m\times m}$ be the
ring of $m\times m$ matrices with entries from $R$ with $m\geq 2$. The
transposition of matrices is an antiautomorphism $R^{m\times m}\to R^{m\times
m}$.

\item Suppose that $R=\prod_{j\in J}R_j$ is the direct product of rings $R_j$.
Similarly, let $R'=\prod_{j\in J}R'_j$. Assume, furthermore, that $\sigma_j :
R_j \to R'_j$ is a family of mappings, where each $\sigma_j$ is a homomorphism
or an antihomomorphism. Then
\begin{equation*}
   \sigma:=\prod_{j\in J} \sigma_j : R \to R': (x_j)_{j\in J}\mapsto \left(x_j^{\sigma_j}\right)_{j\in J}
\end{equation*}
is Jordan homomorphism.

If among the mappings $\sigma_j$ there is a homomorphism, other than an
antihomomorphism, and an antihomomorphism, other than a homomorphism, then
$\sigma$ will be a proper Jordan homomorphism. Thus proper Jordan homomorphisms
can easily be found.

\item Let $\vV$ be a two-dimensional vector space over a commutative field $K$
and let $\vb_1,\vb_2$ be a basis. Then $(1,\vb_1,\vb_2,\vb_1\wedge\vb_2)$ is a
basis of the exterior algebra $\bigwedge\vV$; see \cite[Section~7.2]{jac-85}.
Hence there exists a unique $K$-linear bijection $\sigma:\bigwedge \vV\to
\bigwedge \vV$ with the following properties:  $\sigma$ interchanges $\vb_2$
with $\vb_1\wedge\vb_2$ and fixes the remaining basis elements $1$ and $\vb_1$.
In order to show that $\sigma$ is a Jordan isomorphism, it suffices to verify
the last condition in (\ref{def:jordan}) for the elements of the given basis.
As a matter of fact, that condition is satisfied in a trivial way: Clearly, it
is true if $a=1$ or $b=1$, otherwise it follows from
$\vv_1\wedge\vv_2\wedge\vv_3=0$ for all $\vv_1,\vv_2,\vv_3\in\vV$. Because of
\begin{equation*}
    (\vb_1\wedge\vb_2)^\sigma =\vb_2\neq 0,\mbox{~~and~~}
    \vb_1^\sigma\wedge \vb_2^\sigma=\vb_1\wedge\vb_1\wedge\vb_2=0,\;
\end{equation*}
the Jordan isomorphism $\sigma$ is proper.
\end{enumerate}
\end{exas}

\begin{nrtxt}
If a Jordan homomorphism of $K$-algebras is at the same time a $K$-linear
mapping then it is called a \emph{$K$-Jordan homomorphism}\index{K-Jordan
homomorphism@$K$-Jordan homomorphism}. The importance of $K$-Jordan
isomorphisms is illustrated by the following result, due to \textsc{Armin
Herzer}, which is presented without proof. See \cite[Theorem~9.2.1]{herz-95},
\cite{bart-89}, \cite{blunck+h-03}, and \cite[Chapter~4]{blunck+he-05} for
generalisations. Compare also with Proposition~2.3 and Proposition~3.6 in
\cite{herz-87a}.
\end{nrtxt}

\begin{thm}
Let $R$ and $R'$ be a local algebras over $K$. Then the following assertions
hold:
\begin{enumerate}
\item

If $\sigma: R\to R'$ is a $K$-Jordan isomorphism then the mapping
\begin{equation*}
  \bP(R)\to\bP(R'):\left\{
  \begin{array}{l}
  R(1,a)\mapsto R'(1^\sigma,a^\sigma),\\
  R(a,1)\mapsto R'(a^\sigma,1^\sigma),
  \end{array}
  \right.
\end{equation*}
is a well defined isomorphism of chain geometries.

\item If, moreover, $\card K\geq 3$ then every isomorphism of\/ $\Sigma(K,R)$
onto $\Sigma(K,R')$ is the product of a mapping as in \emph{(a)} and a
projectivity of\/ $\bP(R')$.
\end{enumerate}
\end{thm}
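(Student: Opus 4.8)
The plan is to treat the two parts separately, establishing part (a) by hand and then using it as the backbone for the fundamental-theorem argument in part (b).

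For part (a), I would first settle that the map, call it $\varphi$, is well defined and bijective. By Theorem~\ref{thm:proj.gerade.lokal} every point of $\bP(R)$ has a normal form $R(a,1)$ with $a\in R$ or $R(1,a)$ with $a\in\rad R$, and the two defining rules overlap only on the units, where they agree because a Jordan isomorphism preserves inverses, $(a^{-1})^\sigma=(a^\sigma)^{-1}$. Since $\sigma$ carries units to units and non-units to non-units (both $\sigma$ and $\sigma^{-1}$ preserve units), the image $R'(1,a^\sigma)$ of a point $R(1,a)$ with $a\in\rad R$ is again a genuine point, and $\sigma^{-1}$ induces the inverse map; hence $\varphi$ is a bijection $\bP(R)\to\bP(R')$.

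The heart of (a) is chain preservation. Because $\sigma$ is $K$-linear with $1^\sigma=1$, it fixes $K$ pointwise, so $\varphi$ sends the standard chain $C_0=\bP(K)$ to $C_0'$. Next I would record that the chains through $\infty=R(1,0)$ are exactly the affine cosets $t+Ku$ (with $t\in R$, $u\in R^*$) inside the affine line $\dis(\infty)\cong R$; since on the affine line $\varphi$ is simply $a\mapsto a^\sigma$, additivity and $K$-linearity give $\varphi(t+Ku)=t^\sigma+Ku^\sigma$, again such a coset because $u^\sigma\in R'^*$. Thus chains through $\infty$ go to chains through $\infty'$. To reach an arbitrary chain $C$, I would use that $\GL_2(R)$ acts on $\bP(R)$ by chain automorphisms and that the subgroup generated by the swap $\iota\colon R(x,y)\mapsto R(y,x)$ and the translations $\tau_t\colon R(x,y)\mapsto R(x+ty,y)$ is already transitive on $\bP(R)$. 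One checks that $\varphi$ intertwines these generators with the corresponding maps over $R'$: the $\iota$-relation uses inverse preservation, while the $\tau_t$-relation on a radical point reduces, after expanding $(1+at)^{-1}a$ as a finite Neumann series in the nilpotent $at\in\rad R$, to preservation of iterated Jordan triple products $aba\mapsto a^\sigma b^\sigma a^\sigma$. Moving a chosen point of $C$ to $\infty$ by such a product $h$, writing $C=h^{-1}(hC)$ with $hC$ a chain through $\infty$, and using $\varphi h=h'\varphi$ then exhibits $\varphi(C)=h'^{-1}\varphi(hC)$ as a chain. Applying the same reasoning to $\sigma^{-1}$ gives the converse inclusion, so $\varphi$ is an isomorphism.

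For part (b), let $\psi\colon\Sigma(K,R)\to\Sigma(K,R')$ be any isomorphism. Since $\psi$ preserves chains it preserves $\dis$ (Theorem~\ref{thm:distant}), so $\psi(R(1,0)),\psi(R(0,1)),\psi(R(1,1))$ are three mutually distant points of the chain $\psi(C_0)$; by Theorem~\ref{thm:3fach.kette} there is a $\gamma'\in\GL_2(R')$ carrying $\psi(C_0)$ to $C_0'$ and these three images to $R'(1,0),R'(0,1),R'(1,1)$. Replacing $\psi$ by the projectivity induced by $\gamma'^{-1}$ composed with $\psi$, I may assume $\psi$ fixes $R(1,0),R(0,1),R(1,1)$ and the standard chain. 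Then $\psi$ maps $\dis(\infty)$ onto $\dis(\infty')$ and so defines a bijection $\sigma\colon R\to R'$ via $R(a,1)\mapsto R'(a^\sigma,1)$ with $0^\sigma=0$ and $1^\sigma=1$. The remaining task is to show $\sigma$ is a $K$-Jordan isomorphism: additivity and $K$-linearity are recovered from the incidence of the affine cosets $t+Ku$ (the chains through $\infty$, which $\psi$ permutes), and the multiplicative Jordan identity is recovered from the interplay of the chains through $0$ and through $\infty$. Once $\sigma$ is identified one has $\psi=\varphi$ (the map of part (a)), and undoing the normalization presents the original isomorphism as such a $\varphi$ followed by a projectivity.

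The main obstacle is this last reconstruction step in (b): extracting the full Jordan algebraic structure of $R$ from the purely incidence-theoretic data of chains, and in particular verifying the triple-product identity geometrically. This is exactly where the hypothesis $\card K\ge 3$ becomes indispensable, since for $\card K=2$ each chain carries only three points, too few to pin down multiplication, and genuinely exceptional isomorphisms appear; it is the length and delicacy of this ``fundamental theorem'' computation that lead the author to state the result without proof.
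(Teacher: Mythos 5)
The paper itself contains no proof of this theorem: it is Herzer's result and is explicitly ``presented without proof'', the reader being referred to \cite[Theorem~9.2.1]{herz-95}, \cite{bart-89}, \cite{blunck+h-03}, and \cite[Chapter~4]{blunck+he-05}. So there is nothing in the paper to compare your argument against, and I can only judge it on its own terms. Your part (a) is essentially sound and could be written out in full: the normal forms of Theorem~\ref{thm:proj.gerade.lokal} settle well-definedness and bijectivity, the chains through $\infty$ really are the sets $\{\infty\}\cup(t+Ku)$ because $K$ is central, and the intertwining with the translations reduces on a radical point $R(1,a)$ to $\bigl((1+at)^{-1}a\bigr)^\sigma=(1+a^\sigma t^\sigma)^{-1}a^\sigma$, which does follow from preservation of the palindromic products $(at)^na=a\bigl(t((at)^{n-2}a)t\bigr)a$. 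Note only that your Neumann-series step needs $\rad R$ to be nil, so it covers the finite(-dimensional) situation the paper actually uses but not an arbitrary local $K$-algebra; in general one would need a Hua-type identity there.

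Part (b), however, contains a genuine gap: after the (correct) normalization by a projectivity via Theorems~\ref{thm:distant} and~\ref{thm:3fach.kette}, and the (correct) definition of $\sigma$ from the action on $\dis(\infty)$, the sentence asserting that additivity, $K$-linearity and the identity $(aba)^\sigma=a^\sigma b^\sigma a^\sigma$ are ``recovered'' from the incidence of chains is a restatement of what has to be proved, not a proof. Even the additive step is delicate: preservation of the lines $t+Ku$ yields, by the fundamental theorem of affine geometry, only that $\sigma$ is additive and \emph{semilinear} over $K$ (and only when $\dim_K R\ge 2$ and $\card K\ge 3$), and no argument at all is offered for the multiplicative Jordan identity, which is exactly the long cross-ratio/harmonic-position computation that fills the cited references. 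You correctly locate the obstacle, but you do not overcome it, so part (b) remains unproved.
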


\begin{nrtxt}
By the above, we know not only all automorphisms of the DDs from Theorem
\ref{thm:dd.lokal}, but also all isomorphisms between such DDs, provided that
$\card K\geq 3$. Of course, ``to know'' means that the problem is reduced to
finding all $K$-Jordan isomorphisms between the underlying $K$-algebras.

According to \cite[Remark 4.3.2]{herz-87a}, there exist non-isomorphic Laguerre
algebras which give rise to isomorphic chain geometries and therefore, by
Theorem \ref{thm:dd.laguerre}, to isomorphic divisible designs. However, those
Laguerre algebras are Jordan isomorphic.
\end{nrtxt}

\section{Other kinds of blocks}

\begin{nrtxt}
The construction of a DD from a chain geometry over a finite local algebra, as
described in Theorem \ref{thm:dd.lokal}, can be generalised by modifying the
set of blocks as follows.
\end{nrtxt}

\begin{thm}\label{thm:dd.lokal.anders}
Let $R$ be an finite-dimensional local algebra over $K=\GF(q)$, with $R / \rad
R\cong \GF(q^m)$, whence $\dim_K R= hm$ for some positive integer $h$.
Furthermore, let $C_{0}$ be the standard chain of the chain geometry
$\Sigma(K,R)$, and suppose the base block $B_{0}$ to be chosen as follows:
\begin{enumerate}
\item $B_{0}:=C_{0}\setminus \{R(1,0)\}$, for $q>2$.

\item $B_{0}:=C_{0}\setminus \{R(1,0),R(0,1)\}$, for $q>3$.

\item $B_{0}:=C_{0}\setminus\{R(1,0),R(0,1),R(1,1)\}$, for $q>4$.
\end{enumerate}
This gives, according to Theorem \emph{\ref{thm:spera.ring}}, a
$3$-$(s,k,\lambda_3)$-divisible design with
\begin{equation*}
  v=q^{hm}+q^{(h-1)m} \mbox{~and~}s=q^{(h-1)m}.
\end{equation*}

The remaining parameters $k$ and $\lambda_3$ are
\begin{equation*}
\renewcommand{\arraystretch}{1.2}
\begin{array}{lll}
   k=q,  & \lambda_3=q-2,                             &\mbox{ in case \noslant{(a)}},\\
   k=q-1,& \lambda_3=\frac{1}{2}(q-2)(q-3),        &\mbox{ in case \noslant{(b)}},\\
   k=q-2,& \lambda_3=\frac{1}{6}(q-2)(q-3)(q-4), &\mbox{ in case \noslant{(c)}}.
\end{array}
\end{equation*}
\renewcommand{\arraystretch}{1.0}
\end{thm}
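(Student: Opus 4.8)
The plan is to feed $B_0$ into Theorem~\ref{thm:spera.ring} to obtain a $3$-divisible design with the stated $v$, $s$, $k$, and then to compute $\lambda_3$ directly by counting the blocks through a fixed $\parallel$-transversal $3$-set, exploiting that over a local $K$-algebra the chain through three mutually distant points is unique.

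First I would fix the setup. The standard chain $C_0$ is the image of $\bP(K)=\bP(\GF(q))$, so it has $q+1$ points, any two of which are distant by Lemma~\ref{lem:einbett}; hence $C_0$ and each of its subsets is $\parallel$-transversal. In cases (a), (b), (c) the base block $B_0$ arises from $C_0$ by deleting $j=1,2,3$ points, so $k=\card B_0=q+1-j$ equals $q$, $q-1$, $q-2$, respectively, and the hypotheses $q>2$, $q>3$, $q>4$ are exactly what guarantees $k\ge 3$. Theorem~\ref{thm:spera.ring} then yields a $3$-$(s,k,\lambda_3)$-divisible design with $v=\card R+\card\rad R=q^{hm}+q^{(h-1)m}$ and $s=\card\rad R=q^{(h-1)m}$, the latter by (\ref{eq:radikaldim}); this settles $v$, $s$, and $k$.

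The heart of the argument is a bijection between blocks and ``pointed chains''. Since $K\subset\Z(R)$, the group $K^*$ is central, hence normal, in $R^*$, so by Theorem~\ref{thm:normalisator}~(\ref{thm:normalisator.b}) (equivalently Example~\ref{bsp:1kette}~(\ref{bsp:1kette.algebra})) there is exactly one chain through any three mutually distant points. Each block has the form $B_0^g=C_0^g\setminus T_g$, where $C_0^g$ is a chain and $T_g$ is the $j$-element set of deleted points. Because a block $B$ carries $k\ge 3$ mutually distant points, any chain containing $B$ must be the unique chain through three of them; thus $B$ determines its chain $C$ and the deleted set $C\setminus B$, and $B\mapsto(C,C\setminus B)$ is injective. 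Conversely, $\GL_2(K)\subset\GL_2(R)$ fixes $C_0$ and acts $3$-transitively on it, as used in the proof of Theorem~\ref{thm:3fach.kette}, hence $j$-transitively for $j\le 3$; composing a suitable element of $\GL_2(K)$ with a chosen $g$ realizes an arbitrary $j$-subset of an arbitrary chain as a deleted set. Therefore blocks correspond bijectively to pairs $(C,T)$ with $C$ a chain and $T\subseteq C$ a $j$-subset.

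Finally I would count. Fix a $\parallel$-transversal $3$-set $Y$ and let $C_Y$ be the chain through it, which exists by Theorem~\ref{thm:3punkte} and is unique as above. A block $B=C\setminus T$ contains $Y$ if and only if $Y\subseteq C$ and $T\cap Y=\emptyset$; the first condition forces $C=C_Y$, and then $T$ ranges over the $j$-subsets of $C_Y\setminus Y$. As $\card(C_Y\setminus Y)=(q+1)-3=q-2$, the number of such blocks is
\begin{equation*}
  \lambda_3=\binom{q-2}{j},
\end{equation*}
which equals $q-2$, $\tfrac12(q-2)(q-3)$, and $\tfrac16(q-2)(q-3)(q-4)$ for $j=1,2,3$, as claimed. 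The main obstacle is the bijection of the previous paragraph: one must verify both that a block with $k\ge 3$ points determines its chain (using uniqueness of the chain through three distant points, where centrality of $K$ enters) and that every deletion pattern is attainable (using the $3$-transitivity of $\GL_2(K)$ on $C_0$); once this is in place the count is immediate.
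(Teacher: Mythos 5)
Your proposal is correct and follows essentially the same route as the paper: both rest on the uniqueness of the chain through three mutually distant points (so every block through a transversal $3$-set lies in that one chain), on Theorem \ref{thm:3fach.kette} to realise every $j$-subset of a chain as a deleted set, and on the resulting count $\binom{q-2}{j}$. The only cosmetic difference is that the paper first normalises to the triple $\{R(1,0),R(0,1),R(1,1)\}$ via $3$-$\dis$-transitivity, whereas you count at a general transversal $3$-set and spell out the ``a block determines its chain'' step explicitly.
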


\begin{proof}
Firstly, we observe that $\card C_{0}=q+1$ and that $C_{0}$ is a
$\parallel$-transversal subset. So the assumptions on the cardinality of $q$
guarantee that $B_{0}$ has at least three points.

Next, since $\GL_2(R)$ acts $3$-$\dis$-transitively on $\bP(R)$, it suffices to
determine the number of blocks through $M:=\{R(1,0), R(0,1), R(1,1)\}$. By
Theorem \ref{thm:dd.laguerre}, the standard chain $C_{0}$ is the only chain
containing $M$. Henceforth any block containing $M$ has to be a subset of
$C_{0}$. There are $\SMat2{q-2\\j}$ possibilities to choose a $j$-set $W$ in
$C_{0}\setminus M$, where $j\in\{1,2,3\}$. We infer from Theorem
\ref{thm:3fach.kette} that each such $C_{0}\setminus W$ is a chain. This proves
the assertions on $\lambda_3$. The rest is clear from Theorem
\ref{thm:dd.lokal}.
\end{proof}

\begin{nrtxt}
The previous theorem is taken from \cite{giese+h+s-05a}. It suggests to remove
four or even more points from the standard chain in order to obtain a base
block for a $3$-DD. It is possible to treat the case for four points by
considering the number of \emph{cross ratios\/}\index{cross ratio} that arise
if those points are written in any order. In general, four distinct points
determine six cross ratios, but for a
\emph{harmonic}\index{tetrad!harmonic}\index{harmonic tetrad},
\emph{equianharmonic\/}\index{tetrad!equianharmonic}\index{equianharmonic
tetrad}, or
\emph{superharmonic\/}\index{tetrad!superharmonic}\index{superharmonic tetrad}
tetrad there are less than six values; cf.\ \cite[Section~6.1]{hirschfeld-98}.
Thus several cases have to be treated separately. We refer to
\cite{giese+h+s-05a}, and note that the results from there carry over
immediately to our slightly more general setting of a local algebra. Also, the
``complementary'' setting where a $4$-subset of the standard chain is chosen to
be the base block is described in \cite{giese+h+s-05a}. As before, cross ratios
are the key to calculating the parameter $\lambda_3$.
\end{nrtxt}

\begin{nrtxt}
Yet another ``natural choice'' of a base block is the projective line over such
a field $L\subset R$ which meets the requirements of the Wedderburn principle
theorem (see \ref{thm:wedderburn}). A general treatment of these DDs seems to
be missing in the literature. We present here the following example which is
based on \cite[Exercise~XIX.1]{mcd-74}. See also \cite{blunck+h+z-08a} for a
generalisation.
\end{nrtxt}

\begin{exa}
Let $L:=\GF(4)=\{0,1,\tau,\tau^2\}$ be the field with four elements. Its
multiplicative group is cyclic of order three. Addition in $L$ is subject to
$x+x=0$ for all $x\in L$, and $1+\tau=\tau^2$. The mapping $\sigma:L\to
L:x\mapsto x^2$ is easily seen to be an automorphism of order two.

We consider the local ring $R:=\GF(4)[\eps;\sigma]$ of twisted dual
numbers\index{numbers!twisted dual}\index{twisted dual numbers} over $L$. Thus
\begin{equation*}
  \eps^2=0\mbox{~and~}\eps x=x^\sigma\eps=x^2\eps\mbox{~for all~}x\in L;
\end{equation*}
cf.\ Example \ref{bsp:lokale.ringe} (\ref{bsp:lokale.ringe.twist}). $R$ is a
local algebra over $K:=\GF(2)\subset L$, but not an algebra over $L$, because
$\tau$ is not in the centre of $R$. The radical of $R$ is $\rad R =L\eps=\eps
L$. An isomorphism $R/\rad R\to L$ is given by $(x+y\eps)+\rad R\mapsto x$ for
all $x,y\in L$.

Following Theorem \ref{thm:normalisator} we determine the normaliser of $L^*$
in $R^*$. The units in $R^*$ have the form
\begin{equation*}
  n=x+y\eps\mbox{ with } x\in L^* \mbox{ and } y\in L.
\end{equation*}
Given such an $n$ we clearly have $n^{-1}1n=1$. By $n^{-1}\tau^2 n=(n^{-1}\tau
n)^2$, it remains to calculate $n^{-1}\tau n$. We obtain
\begin{eqnarray*}
  n^{-1}\tau n &=& (x+y\eps)^{-1}\tau (x+y\eps) \\
    &=& (x^{-1}-y\eps)\tau (x+y\eps)\\
   &=& \tau+x^{-1}\tau y\eps - y\eps \tau x - y\eps\tau y\eps \\
   &=& \tau+x^{-1}\tau y\eps - x^2y\tau^2 \eps  - y^3\tau^2 \eps^2 \\
   &=& \tau(1+x^2 y(1-\tau)\eps).
\end{eqnarray*}
As $x^2 y$ can assume all values in $L$, there are four possibilities, viz.\
\begin{equation*}
\begin{array}{lcl}
x^2y= 0 &:& n^{-1}\tau n =\tau\in L^*,\\
x^2y= 1 &:& n^{-1}\tau n =\tau+\eps\notin L^*,\\
x^2y=\tau &:& n^{-1}\tau n =\tau+\tau\eps\notin L^*,\\
x^2y=\tau^2 &:& n^{-1}\tau n =\tau+\tau^2\eps\notin L^*.
\end{array}
\end{equation*}
We infer that $n=x+y\eps$ is in the normaliser of $L^*$ in $R^*$ if, and only
if, $y=0$. Consequently, this normaliser coincides with $L^*$. By $\card L^*=3$
and $\card R^*=16-4=12$, there are four chains through any three mutually
distant points. Summing up, we have shown that
\begin{equation*}
    \big(\bP\big({\GF(4)[\eps;\sigma]}\big),\cC\big({\GF(4)},\GF(4)[\eps;\sigma]\big),{\parallel}\big)
\end{equation*}
is a transversal $3$-$(4,5,4)$-DD with $v=20$ points and $b=256$ blocks. As a
matter of fact, we actually have a $4$-$(4,5,1)$-DD: Given any
$\cR$-transversal $4$-set, say $\{p_0,p_1,p_2,p_3\}$, precisely one of the four
blocks through $p_0,p_1,p_2$ will contain $p_3$.
\end{exa}

\section{Notes and further references}

\begin{nrtxt}
All finite chain geometries (not only Laguerre geometries) have nice point
models in finite projective spaces, and models in terms of finite
Grassmannians. See the many references in \cite{blunck+h-00b},
\cite{blunck+h-00c}, \cite[Chapter~11]{blunck+he-05}, and
\cite[p.~812]{herz-95}. Thus, many DDs from this chapter allow---up to
isomorphism---other descriptions from which their connection with finite local
algebras may not be immediate.

For example, the DD which belongs to the algebra of dual numbers over $\GF(q)$
arises also as follows:
\begin{enumerate}
\item The points of the DD are the points of a quadratic cone without its
vertex in the three-dimensional projective space over $\GF(q)$. The blocks are
the non-degenerate conic sections of this cone. The point classes are the
generators of this cone, the vertex being removed from them. This is the finite
analogue of the Blaschke cone\index{Blaschke cone}.

\item The points of the DD are the lines of a parabolic linear congruence
without its axis in the three-dimensional projective space over $\GF(q)$. The
blocks are the reguli which are entirely contained in this congruence. The
point classes are the pencils of lines which are entirely contained in this
congruence, the axis being removed from them.
\end{enumerate}
The \emph{Klein mapping\/}\index{Klein mapping}---carrying the name of
\textsc{Felix Klein} (1849--1925)---is a one-one correspondence between the set
of lines of the three-dimensional projective space over a commutative field $K$
and the set of points of a certain quadric in a five-dimensional projective
space over $K$; it is called the \emph{Klein quadric}\index{Klein quadric}. A
reader who is familiar with this mapping will notice immediately that the Klein
image of the model in (b) is just the model described in (a). However, the
ambient space of the cone now is a three-dimensional tangent space of the Klein
quadric. Cf.\ \cite[15.4]{hirschfeld-85}.
\end{nrtxt}


\chapter{An Outlook: Finite Chain Geometries}\label{chap:FCG}

\section{A parallelism based upon the Jacobson radical}

\begin{nrtxt}\label{:parallel.allg}
Now we turn our attention to the projective line over an arbitrary ring $R$, as
we present the announced definition of parallel points in the general case. It
is taken from \cite{blunck+h-03a}, where the term ``radical parallelism'' is
used instead: A point $p\in\bP(R)$ is called \emph{parallel\/}\index{parallel
points}\index{points!parallel} to a point $q\in\bP(R)$ if
\begin{equation*}
  x\dis p \,\Rightarrow\, x\dis q
\end{equation*}
holds for all $x\in\bP(R)$. In this case we write $p \parallel q$. By
definition, the distant relation on $\bP(R)$ is a $\GL_2(R)$-invariant notion.
Hence
\begin{equation}
  p\parallel q \,\Leftrightarrow\, p^\gamma \parallel q^\gamma
\end{equation}
holds for all $p,q\in\bP(R)$ and all $\gamma\in\GL_2(R)$.
\par
Clearly, the relation $\parallel$ is reflexive and transitive. We shall see
below that $\parallel$ is in fact an equivalence relation; also it will become
clear that our previous definition of parallel points ($R$ a local ring) is a
particular case of the definition from the above.
\end{nrtxt}

\begin{nrtxt}
The connection between the parallelism on $\bP(R)$ and the Jacobson radical of
$R$ (cf.\ \ref{:jacobson}) is as follows: We consider the factor ring $R/\rad
R=:\overline R$ and the canonical epimorphism $R\to \overline R:a\mapsto a+\rad
R=:\overline a$. It has the crucial property
\begin{equation}\label{eq:einheit-invariant}
  a\in R^* \,\Leftrightarrow\, \overline a\in \overline R\,^*
\end{equation}
for all $a\in R$; cf. \cite[Proposition~4.8]{lam-91}. The Jacobson radical of
the factor ring $R/\rad R$ is zero \cite[Proposition~4.6]{lam-91}.

In geometric terms we obtain a mapping
\begin{equation}\label{eq:def-phiquer}
  \bP(R)\to\bP(\overline R): p=
  R(a,b)\mapsto \overline R(\overline a,\overline b)=:\overline p
\end{equation}
which is well defined and surjective \cite[Proposition~3.5]{blunck+h-00b}.
Furthermore, as a geometric counterpart of (\ref{eq:einheit-invariant}) we have
\begin{equation}\label{eq:dist-invariant}
  p\dis q\,\Leftrightarrow\, \overline p\dis \overline q
\end{equation}
for all $p,q\in\bP(R)$, where we use the same symbol to denote the distant
relations on $\bP(R)$ and on $\bP(\overline R)$, respectively. See Propositions
3.1 and 3.2 in \cite{blunck+h-00b}. Of course, all this is a generalisation of
the mapping given in (\ref{eq:P(R)->P(Rquer)}), where $R$ was supposed to be
local.

The following is taken from Theorem~2.2 and Corollary~2.3 in
\cite{blunck+h-03a}:
\end{nrtxt}

\begin{thm}\label{thm:2}
The mapping given by \emph{(\ref{eq:def-phiquer})} has the property
\begin{equation}\label{eq:par=quergleich}
  p \parallel q \,\Leftrightarrow\, \overline p=\overline q
\end{equation}
for all $p,q\in\bP(R)$. Consequently, the parallelism \emph{($\parallel$)} on
the projective line over a ring is an equivalence relation.
\end{thm}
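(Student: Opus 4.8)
The statement packages two claims: first, the characterization $p\parallel q\Leftrightarrow\overline p=\overline q$; second, that $\parallel$ is therefore an equivalence relation. The second is an immediate corollary of the first, since equality of $\overline p$ and $\overline q$ in $\bP(\overline R)$ is patently reflexive, symmetric, and transitive; so the entire burden falls on proving (\ref{eq:par=quergleich}). The plan is to prove the two implications of (\ref{eq:par=quergleich}) separately, leaning throughout on the distant-relation transport property (\ref{eq:dist-invariant}) and on the surjectivity of the map (\ref{eq:def-phiquer}). A key structural fact I would exploit is that the radical $\rad R$ behaves, at the level of $\bP(\overline R)$, exactly like the situation of a ring with zero radical: since $\rad(\overline R)=0$, the ring $\overline R$ has a particularly clean distant relation (distinct points are distant in the local case; in general $\overline R$ is semisimple-like so that $\overline p\dis\overline q$ fails only when the images genuinely ``touch'').

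\textbf{The easy direction.} I would first establish $\overline p=\overline q\Rightarrow p\parallel q$. Suppose $\overline p=\overline q$ and let $x\in\bP(R)$ satisfy $x\dis p$. By (\ref{eq:dist-invariant}) this gives $\overline x\dis\overline p=\overline q$, and applying (\ref{eq:dist-invariant}) in the reverse direction yields $x\dis q$. Since $x$ was arbitrary, $p\parallel q$ by the definition in \ref{:parallel.allg}. This direction is purely formal and uses only that (\ref{eq:dist-invariant}) is a genuine equivalence.

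\textbf{The hard direction.} The substantive content is $p\parallel q\Rightarrow\overline p=\overline q$. Here I would argue by contraposition: assuming $\overline p\neq\overline q$, I must produce a witness $x\in\bP(R)$ with $x\dis p$ but $x\notdis q$, thereby violating the defining condition for $p\parallel q$. The natural strategy is to work downstairs in $\bP(\overline R)$ first. Since $\overline R$ has trivial Jacobson radical, I expect that two distinct points $\overline p\neq\overline q$ of $\bP(\overline R)$ can always be separated by a third point $\overline x$ that is distant to one but not the other; intuitively, $\overline R$ splits (by semisimplicity) into a product of matrix rings over fields, where the distant graph is rich enough to separate any two distinct points. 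After normalizing via the $3$-$\dis$-transitive action of $\GL_2(\overline R)$ (Theorem \ref{thm:3fach}) to place $\overline p$ and $\overline q$ in convenient coordinates, I would exhibit such an $\overline x$ explicitly. The remaining step is to lift: using surjectivity of (\ref{eq:def-phiquer}), choose any $x\in\bP(R)$ with image $\overline x$; then (\ref{eq:dist-invariant}) propagates $\overline x\dis\overline p$ and $\overline x\notdis\overline q$ back up to $x\dis p$ and $x\notdis q$, which is exactly the desired violation of parallelism.

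\textbf{The main obstacle.} The crux is the separation claim in $\bP(\overline R)$: that distinct points of the projective line over a radical-free (hence semisimple, in the finite case) ring can always be told apart by the distant relation. For a field this is Theorem \ref{thm:koerper.dist} (distinct $\Rightarrow$ distant), which trivially separates. The difficulty is the general semisimple case, where $\overline p\notdis\overline q$ is possible even for distinct points, so one genuinely needs a \emph{third} separating point rather than relying on $\overline p\dis\overline q$ itself. I expect this to require a componentwise analysis of $\overline R$ as a product of simple rings, checking that in each matrix-ring factor the distant graph on $\bP$ is connected and separating; this is where the hypothesis $\rad\overline R=0$ does the real work, and it is the step I would develop most carefully.
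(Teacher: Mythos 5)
The paper does not actually prove Theorem \ref{thm:2}; it only cites Theorem~2.2 and Corollary~2.3 of \cite{blunck+h-03a}, so your attempt can only be judged on its own merits. Your easy direction is correct, and your lifting argument is also correct in both directions: combining (\ref{eq:dist-invariant}) with the surjectivity of (\ref{eq:def-phiquer}) shows that $p\parallel q$ holds in $\bP(R)$ if and only if $\overline p\parallel\overline q$ holds in $\bP(\overline R)$, so the whole theorem reduces to showing that $\parallel$ is the equality relation over a ring with zero Jacobson radical. But that is exactly where you stop: the ``separation claim'' you defer is not a routine verification --- it \emph{is} the theorem, specialised to $\rad R=0$ --- and you give no proof of it. Moreover, the route you sketch cannot work at the stated level of generality. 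The theorem is asserted for an arbitrary ring, and $R/\rad R$ decomposes as a product of matrix rings over fields only under an artinian hypothesis (Theorem \ref{artin.wedderburn} is stated for finite rings); already $R=\bZ$ has $\rad\bZ=0$ without being semisimple, so the componentwise analysis never gets started. A further flaw: you propose to normalise $\overline p,\overline q$ via the $3$-$\dis$-transitivity of Theorem \ref{thm:3fach}, but that theorem only moves triples of \emph{mutually distant} points, whereas the pair you must separate is typically non-distant; only one of the two points can be normalised.

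The gap closes with a short computation that needs no structure theory. Since $\parallel$ is $\GL_2(R)$-invariant (see \ref{:parallel.allg}) and $\gamma\mapsto\overline\gamma$ is a group homomorphism, both sides of (\ref{eq:par=quergleich}) are $\GL_2(R)$-invariant, so you may assume $p=R(1,0)$; by \ref{:nachbarschaft} its distant points are exactly the $R(c,1)$ with $c\in R$. If $p\parallel q$ then in particular $R(0,1)\dis q$, which by Example \ref{bsp:matrizen} (\ref{bsp:matrizen.dreieck}) forces $q=R(a,b)$ with $a\in R^*$, i.e.\ $q=R(1,d)$ for some $d\in R$. Now $R(c,1)\dis R(1,d)$ means that the matrix with rows $(c,1)$ and $(1,d)$ is invertible, and elementary row operations reduce this to the condition $1-cd\in R^*$. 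Hence $p\parallel q$ is equivalent to $1-cd\in R^*$ for all $c\in R$, which by the characterisation of the Jacobson radical quoted in \ref{:jacobson} is equivalent to $d\in\rad R$, i.e.\ to $\overline q=\overline R(\overline 1,\overline 0)=\overline p$ by Theorem \ref{thm:LRinvert}. This settles the hard direction (and recovers the easy one) for an arbitrary ring, which is what the statement requires.
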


\begin{txt}
Let us write $[p]$ for the \emph{parallel class}\index{parallel class}\/ of
$p\in\bP(R)$. It can be derived from (\ref{eq:par=quergleich}) that
\begin{equation}\label{eq:gleichm}
 \card[p]=\card\rad R
\end{equation}
for all $p\in\bP(R)$. Thus the cardinality of $\rad R$ can be recovered from
the $\bP(R)$ as the cardinality of an arbitrarily chosen class of parallel
points. In particular, $\parallel$ is the equality relation if, and only if,
$\rad R=\{0\}$.
\par
An easy consequence of (\ref{eq:dist-invariant}) and Theorem \ref{thm:2} is
\begin{equation}\label{eq:par->nichtdist}
  p\parallel q
                 \,\Leftrightarrow\,
  \overline p=\overline q
                 \Rightarrow
  \overline p \,\notdis\, \overline q
                 \,\Leftrightarrow\,
  p \,\notdis\, q
\end{equation}
for all $p,q\in \bP(R)$. In general, however, the converse of
(\ref{eq:par->nichtdist}) is not true:
\end{txt}

\begin{thm}\label{thm:3}
Let $R$ be an arbitrary ring. The relations ``parallel'' \emph{($\parallel$)}
and ``non-distant'' \emph{($\notdis$)} on $\bP(R)$ coincide if, and only if,
$R$ is a local ring.
\end{thm}

For a proof we refer to \cite[Theorem~2.5]{blunck+h-03a}. By the above, our two
definitions of parallel points in \ref{def:parallel} and \ref{:parallel.allg}
coincide in case of a local ring.

\section{Counting the point set}

\begin{nrtxt}
Let $R$ be a finite ring. The problem to determine the number of points of the
projective line over $R$ is intricate. Our approach follows
\cite[Section~10]{veld-89} and it uses the following famous theorem on the
structure of semisimple rings due to \textsc{Joseph Henry Maclagan-Wedderburn}
and \textsc{Emil Artin}; cf.\ \cite[Theorem~3.5]{lam-91}. We state it only for
the particular case of a finite ring:
\end{nrtxt}

\begin{thm}\label{artin.wedderburn}\index{Wedderburn-Artin
theorem}\index{theorem!Wedderburn-Artin} Let $R$ be a finite ring such that
$\rad R$ is zero. Then $R$ is isomorphic to a direct product $R_1\times
R_2\times\cdots\times R_n$, where each $R_i$ is a full matrix ring
$\GF(q_i)^{m_i\times m_i}$. The number $n$ is uniquely determined, as are the
pairs $(m_i,q_i)$ for $i\in\{1,2,\ldots, n\}$.
\end{thm}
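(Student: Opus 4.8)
The plan is to obtain the finite case as a specialisation of the general Artin--Wedderburn theorem (the version cited as \cite[Theorem~3.5]{lam-91}), invoking finiteness only at two points: to guarantee semisimplicity, and to force the occurring division rings to be commutative. First I would note that a finite ring possesses only finitely many left ideals and is therefore left artinian; combined with the hypothesis $\rad R=0$, this says precisely that $R$ is a semisimple ring. The general Artin--Wedderburn theorem then furnishes division rings $D_1,\dots,D_n$ and positive integers $m_1,\dots,m_n$ together with an isomorphism $R\cong D_1^{m_1\times m_1}\times\cdots\times D_n^{m_n\times m_n}$. Since $R$ is finite, each direct factor, and hence each $D_i$, is finite.

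The next step is to identify the $D_i$. A finite division ring is commutative by Wedderburn's little theorem (the same result invoked for finite fields just before Theorem~\ref{thm:endl.Moebius}), so each $D_i$ is a finite field, that is, a Galois field $\GF(q_i)$ with $q_i$ a prime power. This already yields the asserted form $R\cong\GF(q_1)^{m_1\times m_1}\times\cdots\times\GF(q_n)^{m_n\times m_n}$.

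For the uniqueness of $n$ and of the pairs $(m_i,q_i)$ I would pass to the centre. The centre of a matrix ring $\GF(q)^{m\times m}$ is the field of scalar matrices, isomorphic to $\GF(q)$, so $\Z(R)\cong\GF(q_1)\times\cdots\times\GF(q_n)$. This is a finite commutative semisimple ring, and its decomposition into indecomposable ring factors (equivalently, the minimal nonzero ideals cut out by the primitive idempotents) is unique; hence both $n$ and the orders $q_i$ are intrinsic to $R$. For the matrix sizes, the $i$-th factor $R_i\cong\GF(q_i)^{m_i\times m_i}$ is a $\GF(q_i)$-algebra of dimension $m_i^2$, so $m_i$ is read off from $\card R_i=q_i^{m_i^2}$; matching each ring factor to its centre then pins down the multiset of pairs $(m_i,q_i)$ up to the ordering of the factors.

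The main obstacle is really the content imported from general theory rather than anything peculiar to finite rings: the heart of Artin--Wedderburn is the statement that each homogeneous (isotypic) component of a semisimple ring is a full matrix ring over a division ring, which rests on Schur's lemma and the Jacobson density theorem. One could instead reproduce that argument directly for finite $R$ --- decomposing $R$ as a left module into simple submodules, grouping them by isomorphism type into two-sided ideals, and computing the relevant endomorphism rings --- but in this expository setting it is cleaner to cite it. Once that structure result and Wedderburn's little theorem are granted, the specialisation to finite rings and the uniqueness bookkeeping via the centre are routine.
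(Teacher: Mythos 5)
Your argument is correct, but note that the paper does not prove this statement at all: it is stated as a quoted classical result, with a pointer to \cite[Theorem~3.5]{lam-91}, ``only for the particular case of a finite ring.'' So there is no proof in the text to compare against; what you have written is precisely the standard derivation that the citation is standing in for. Your reduction is the expected one: finiteness gives left artinian, which together with $\rad R=0$ gives semisimplicity; the general Artin--Wedderburn theorem produces matrix rings over division rings; and Wedderburn's little theorem (which the paper itself invokes elsewhere, just before Theorem \ref{thm:endl.Moebius}) turns the finite division rings into Galois fields $\GF(q_i)$. For the uniqueness claim, your route through the centre works, but it is worth making explicit that the ring-direct factors $R_i$ themselves --- not merely their centres --- are intrinsic to $R$: they are cut out by the primitive central idempotents, which are uniquely determined; once each $R_i$ is pinned down as a ring, $q_i=\card \Z(R_i)$ and $m_i$ from $\card R_i=q_i^{m_i^2}$ follow as you say. (Alternatively, one recovers $D_i$ and $m_i$ directly from the simple left $R$-modules via Schur's lemma, which is how the uniqueness is usually packaged in the general theorem you are citing.) In short: nothing is wrong, and the only judgement call is how much of the general structure theory you reprove versus cite --- the paper cites all of it.
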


\begin{nrtxt}
It is possible to count the number of points of the projective line over the
ring of $m\times m$ matrices with entries from $\GF(q)$, because there exists a
bijection from this projective line onto the set of $m$-dimensional subspaces
of a $2m$-dimensional vector space over the same field. This result is due to
\textsc{Xavier Hubaut} \cite[p.~500]{hubaut-65}, who proved it for an arbitrary
commutative field $K$ instead of $\GF(q)$. This powerful tool was generalised
by \textsc{Andrea Blunck} \cite[Theorem~2.1]{blunck-99} to the ring of
endomorphisms of a vector space, without any restriction on its dimension or
the ground field. We add in passing that the projective line over a matrix ring
is essentially nothing else but a particular example of a \emph{projective
space of matrices}\index{projective space of matrices} as considered in
\cite[p.~124]{wan-96}; see also \cite{blunck+h-05a} and \cite{huanglp-06a}.

By virtue of this bijection and by a result of \textsc{Joseph Adolphe Thas}
\cite[3.3]{thas-71}, we obtain
\begin{equation}\label{eq:thas}
  \card\big(\bP(\GF(q)^{m\times m})\big)
  = \prod_{i=0}^{m-1}{\frac {{q}^{2m-i}-1}{{q}^{m-i}-1}}\,.
\end{equation}
See also \cite[Theorem~3.1]{hirschfeld-98}.

Next, it is easy to see that the projective line over a direct product of
rings, say
\begin{equation*}
  R_1\times R_2\times \cdots\times R_n,
\end{equation*}
is in one-one correspondence with the cartesian product\footnote{The case
$\bZ_6\cong \GF(2)\times \GF(3)$ is illustrated in Figure \ref{abb:Z6-dist}.}
\begin{equation*}
  \bP(R_1)\times \bP(R_2)\times \cdots \times \bP(R_n).
\end{equation*}
Hence the Wedderburn-Artin Theorem \ref{artin.wedderburn} and formula
(\ref{eq:thas}) provide the number of points on the projective line over a
direct product of matrix rings.

Finally, given any finite ring $R$ we infer from (\ref{eq:gleichm}) that
\begin{equation}\label{}
    \card\bP(R) = \big({\card\rad R}\big) \big({\card\bP(\overline R)}\big),
\end{equation}
where $\overline R=R/\rad R$. Since $\rad \overline R=0$, we can apply our
result from the above to count the number of points on $\bP(\overline R)$, thus
obtaining a formula for the number of points of the projective line $\bP(R)$.
\end{nrtxt}

\section{Divisible designs vs.\ finite chain geometries}

\begin{nrtxt}\label{:vergleich}
To end this series of lectures, let us compare the definition of a divisible
design from \ref{def:DD} with properties of a chain geometry $\Sigma(K,R)$,
where $R$ is a finite ring. Given $\Sigma(K,R)$ we can associate with it the
positive integers
\begin{equation}\label{eq:ketten.parameter}
    v:=\card\bP(R),\; t:=3,\; s_1:=\card\rad R,\; s_2:=v-\card R,\; k:=\card K+1,
    \mbox{ and } \lambda_t,
\end{equation}
where $\lambda_t$ is the constant number of blocks through any $t=3$ mutually
distant points. As we saw, $\lambda_t$ depends on ``how'' the field $K$ is
embedded in $R$, whence we cannot not state a precise value. We remark that
$v\geq \card R + \card\rad R$ implies the inequality
\begin{equation*}
  s_2\geq \card  R + \card \rad R-\card  R =\card \rad R.
\end{equation*}
\end{nrtxt}

\begin{nrtxt}
Given a finite chain geometry the following assertions hold, where we use the
constants introduced in (\ref{eq:ketten.parameter}):
\begin{itemize}

\item[(A$_1$)] $\card [x] =s_1$ for all $x \in \bP(R)$.

\item[(A$_2$)] $\card \{y\in\bP(R)\mid y\notdis x \}=s_2$ for all $x \in
\bP(R)$.

\item[(B$_1$)] $\cC(K,R)$ is a set of subsets of $\bP(R)$ with $\card C=k$ for
all chains $C \in\cC(K,R)$. The points of any chain are mutually distant.

\item[(C$_1$)] For each $t$-subset $Y\subset \bP(R)$ of mutually distant points
there exist a exactly $\lambda_t$ chains of $\mathcal{C}(K,R)$ containing $Y$.

\item[(D$_1$)] $t\leq \frac{v}{s_1}$.
\end{itemize}

Thus any finite chain geometry is ``almost'' a $3$-divisible design. However,
unless $R$ is a local ring, a $\parallel$-transversal $3$-subset of $\bP(R)$
need not be a subset of any chain, and the parameter $s_1$ need not coincide
with $s_2$.

On the other hand, the preceding conditions (A$_1$)--(D$_1$) could serve as a
starting point for the investigation of ``divisible design-like structures'' in
the future.
\end{nrtxt}


\small
\addcontentsline{toc}{chapter}{Bibliography} 

\footnotesize
\addcontentsline{toc}{chapter}{Index} 
\printindex 
\end{document}